\documentclass[10pt]{article}
\usepackage{graphicx}
\usepackage{amsthm,amsfonts,amssymb,amsmath,oldgerm}
\usepackage{fullpage}
\usepackage{graphicx}
\usepackage{mathrsfs}
\usepackage{xcolor}
\numberwithin{equation}{section}
\usepackage{enumitem}
\usepackage{hyperref}
\usepackage{subcaption}

\makeatletter
\def\namedlabel#1#2{\begingroup
    #2
    \def\@currentlabel{#2}
    \phantomsection\label{#1}\endgroup
}
\makeatother



\newcommand{\R}{\mathbb R}
\newcommand{\C}{\mathbb C}

\renewcommand{\Re}{\mathrm{Re}}
\renewcommand{\Im}{\mathrm{Im}}

\newcommand{\w}{\mathbf{w}}
\renewcommand{\u}{\mathbf{u}}

\renewcommand{\v}{\mathbf{v}}

\newcommand{\El}{\mathcal{L}}

\newtheorem{theorem}{Theorem}[section]
\newtheorem{proposition}[theorem]{Proposition}
\newtheorem{corollary}[theorem]{Corollary}
\newtheorem{lemma}[theorem]{Lemma}
\newtheorem{remark}[theorem]{Remark}
\theoremstyle{definition}

\allowdisplaybreaks[3]

\title{Nonlinear dynamics of periodic Lugiato-Lefever waves against sums of co-periodic and localized perturbations}
\author{Joannis Alexopoulos\thanks{Department of Mathematics, Karlsruhe Institute of Technology, Englerstra\ss e 2, 76131 Karlsruhe, Germany;\\ \texttt{joannis.alexopoulos@kit.edu}}}

\begin{document}

\maketitle

\begin{abstract}
In recent years, essential progress has been made in the nonlinear stability analysis of periodic Lugiato-Lefever waves against co-periodic and localized perturbations. Inspired by considerations from fiber optics, we introduce a novel iteration scheme which allows to perturb against sums of co-periodic and localized functions. This unifies previous stability theories in a natural manner. 
\paragraph*{Keywords.} Nonlinear stability; Periodic waves; Lugiato-Lefever equation; Nonlinear Schr\"odinger equations; Nonlocalized perturbations; Tooth problem \\
\textbf{Mathematics Subject Classification (2020).} 35B10; 35B35; 35Q55
\end{abstract}

\section{Introduction}
We study the Lugiato-Lefever equation on the extended real line \begin{align}
\label{LLE}
    \partial_t u = -\beta i \partial_x^2 u - (1+i\alpha) u + i |u|^2 u + F, \quad \beta \in \{-1,1\}, \quad \alpha \in \R, \quad  F>0,
\end{align}
for $u:\R\times [0,\infty) \rightarrow \C$, which is a model from nonlinear optics \cite{chembo}. An important observation is that the  principal part $-\beta i \partial_x^2 u$ is dispersive while the damping term $-u$ causes energy dissipation. The forcing term $F$ again adds energy to the physical system and allows for pattern formation as predicted by Lugiato and Lefever in  \cite{original_lle}. The most fundamental patterns such as pulses, small-amplitude or dnoidal periodic waves are found in \cite{pulse1, pulse2, reichel_pulse, bengel} and in \cite{HD} and \cite{dnoidal}, respectively. Recently, in \cite{BengeldeRijk}, the authors have obtained large-amplitude periodic waves. 

In this paper,  we prove nonlinear $L^\infty$-stability of $T$-periodic standing waves against initial perturbations from the space $L^2_{\textrm{\textrm{per}}}(0,T) \oplus L^2(\R)$\footnote{ The term $\oplus$ denotes the direct sum, that is for every $u \in L^2_{\textrm{\textrm{per}}}(0,T) \oplus L^2(\R)$, we find precisely one $w \in L^2_{\textrm{\textrm{per}}}(0,T)$
and $v \in L^2(\R)$ such that $u = w + v$.} under diffusive spectral stability assumptions. These spectral assumptions are only established in \cite{BengeldeRijk} and \cite{HD}.

We emphasize that sums of periodic and localized perturbations are not necessarily localized or periodic and thus our result is a nontrivial unification of the theories \cite{LLE_periodic} (co-periodic) and \cite{haragus} (localized). 
For the precise formulation of our main result, we refer to \S\ref{section_main_result}. 

 In view of fully nonlocalized perturbations, the recently developed nonlinear stability theory for dissipative semilinear systems \cite{ours} is not immediately applicable to all pattern-forming semilinear systems such as the Lugiato-Lefever equation. From this perspective, the present paper is the first to accommodate nonlocalized perturbations and to combine $H^l_{\textrm{per}}(0,T)$- and $H^k$-theory.
 On the other hand, considerations from fiber optics, see \cite{application} and Remark \ref{remark_fiber}, where combinations of localized and co-periodic effects naturally occur, motivate the investigation of these types of perturbations. Interpretating the Lugiato-Lefever equation as a variant of the cubic nonlinear Schr\"odinger equation,
a related inspiration for this paper comes from the so-called \textit{tooth problem} asking whether solutions of the nonlinear Schr\"odinger equation with not necessarily small initial data from $L^2_{\textrm{\textrm{per}}}(\R) \oplus L^2(\R)$ exist globally; we refer to  \cite{kunst_klaus} and \cite{leonid_and_so_on} for answers to the tooth problem and background information.

Our central challenge is to develop the right modulational ansatz to make a nonlinear iteration argument close. Inspired by \cite{uniform}, we introduce both a temporal and a localized spatio-temporal phase modulation to capture the critical dynamics of the perturbation induced by translational invariance of (\ref{LLE}). Moreover, we employ novel $L^2$-$L^\infty$-estimates in the nonlinear iteration argument to control the interaction between the periodic and localized components of the perturbation. For more details on the strategy of the proof, we refer to \S\ref{section_strategy}. 
The outlook section \S \ref{section_discussion} is devoted to the robustness of our approach as well as its possible extension to periodic wave trains in viscous conservation laws where the handling of fully nonlocalized perturbations is similarly challenging as for the Lugiato-Lefever equation but for different reasons.
In case of the Lugiato-Lefever equation, a crucial difficulty towards extending to a fully nonlocalized stability result is to choose a suitable class of perturbations which contains all $C^\infty$-functions. The generic space of perturbations is given by $$C_{\textrm{ub}}^m(\R) = \{f: \R \rightarrow \C: f \textrm{ is }m\textrm{-times differentiable  with uniformly continuous derivatives}\},$$ $m\in \mathbb{N}$, whenever studying reaction-diffusion systems \cite{SHR, Bjoerns, ours}. The solutions of (\ref{LLE}) with initial data in $H^1_{\textrm{\textrm{per}}}(\R) \oplus H^1(\R)$ naturally lie in $C_{\textrm{ub}}(\R)$ due to Sobolev embedding. However, it is shown in \cite{bona} that $t \mapsto ||e^{i \partial_x^2 t}u_0||_{L^\infty}$ blows up in finite times for certain $u_0 \in C_{\textrm{ub}}(\R)$ and therefore it is convenient to study other spaces than $C_{\textrm{ub}}(\R)$ to approach a fully nonlocalized stability result for the Lugiato-Lefever equation  (\ref{LLE}). More suitable variants are given by the so-called modulation spaces $M^{m}_{\infty,1}(\R)$, $m\in \mathbb{N}_0$, which are introduced in \cite{mod_space}.  We discuss such an extension in \S\ref{section_modulation}.

\section{Preparation and main result}
We reformulate the Lugiato-Lefever equation as a semilinear system with a $\mathbb{C}$-linear part by splitting into real- and imaginary variables. Then, we construct perturbed solutions with initial data in $L^2_{\textrm{\textrm{per}}}(0,T) \oplus L^2(\R)$ and derive the associated perturbation equations. At the end of this section, we impose spectral properties and formulate our main result.
\subsection{Reformulation as real system}
As $|u|^2u$ is not differentiable with respect to $u \in \mathbb{C}$, we introduce  $\u := ( \u_r,\u_i)^T := (\Re (u), \Im (u))^T: \R \rightarrow \R^2$ which transforms (\ref{LLE}) into the real system
\begin{align}
\label{LLE_real}
    \u_t = \mathcal{J} \left(\begin{pmatrix}-\beta & 0 \\
    0 & -\beta \end{pmatrix}\u_{xx} + \begin{pmatrix}-\alpha & 0 \\
    0 & -\alpha \end{pmatrix}\u \right) - \u + \mathcal{N}(\u) + \begin{pmatrix}
        F \\
        0
    \end{pmatrix},
\end{align}
where
\begin{align*}
    \mathcal{J} = \begin{pmatrix}
        0 & -1 \\
        1 & 0
    \end{pmatrix}, \quad \mathcal{N}(\u) = |\u|^2 J \u = \begin{pmatrix}
        - \u_i^3 - \u_r^2 \u_i \\
        \u_r\u_i^2 + \u_r^3 
    \end{pmatrix}.
\end{align*}

\subsection{The perturbed solution in \texorpdfstring{$L^2_{\textrm{per}}(0,T) \oplus L^2(\R)$}{Lg}}
\label{section_construct_solution}
We assume the existence of a periodic standing wave.
\begin{itemize}
\item[\namedlabel{assH1}{\upshape (H1)}] There exists a smooth, nonconstant and $T$-periodic stationary solution $\phi_0: \R \rightarrow \mathbb{C}$  of (\ref{LLE}). 
\end{itemize}
We set $\phi:= (\phi_r,\phi_i)^T := (\Re (\phi_0), \Im (\phi_0))^T: \R \rightarrow \R^2$
and construct a solution of (\ref{LLE_real}) with initial datum $\phi + \w_0 + \v_0$ in $L^2_{\textrm{\textrm{per}}}(0,T) \oplus L^2(\R)$ with $\w_0 \in L^2_{\textrm{per}}(0,T)$ and $\v_0 \in L^2(\R)$ by following the strategy of \cite{kunst_klaus}.  
We first solve  
\begin{align}
\label{LLE_period}
    \begin{split} 
   & \w_t = \mathcal{J} \left(\begin{pmatrix}-\beta & 0 \\
    0 & -\beta \end{pmatrix}\w_{xx} + \begin{pmatrix}-\alpha & 0 \\
    0 & -\alpha \end{pmatrix}\w \right) - \w + \mathcal{N}(\w) + \begin{pmatrix}
        F \\
        0
    \end{pmatrix}\\
        &\w(0) =  \phi + \mathbf{w}_0
    \end{split}
\end{align}
in $L^2_{\textrm{\textrm{per}}}(0,T)$. Then, we transfer from the solution $\w$ of (\ref{LLE_real}) to a solution $\u$ of \ref{LLE_real} with initial datum $\phi + \w_0 + \v_0$ by solving the perturbed problem
\begin{align}
    \label{LLE_l2}
     \begin{split} 
   & \v_t = \mathcal{J} \left(\begin{pmatrix}-\beta & 0 \\
    0 & -\beta \end{pmatrix}\v_{xx} - \begin{pmatrix}-\alpha & 0 \\
    0 & -\alpha \end{pmatrix}\v \right) - \v + \mathcal{N}(\v + \w) - \mathcal{N}(\w)\\
        &\v(0) =  \mathbf{v}_0.
    \end{split}
\end{align}
In summary, if we solve (\ref{LLE_period}) and (\ref{LLE_l2}), then $\u = \w + \v$ is a solution of (\ref{LLE_real}) with $\u(0) = \phi +  \w_0 + \v_0$. 
 
\begin{remark}[Interpretation from fiber optics]
\label{remark_fiber}
    The cubic nonlinear Schr\"odinger equation on $L^2_{\textrm{\textrm{per}}}(0,T) \oplus L^2(\R)$ can be understood as model from nonlinear fiber optics by considering $t\geq 0$ as point on a fiber and $x \in \R$ as time variable, cf. \cite{application}. The stationary periodic solution $\phi$ is then the signal at any point on the fiber if it is chosen as input signal.  Prescribing a  signal at the initial point of the fiber by $\phi + L^2_{\textrm{\textrm{per}}}(0,T)$,  we ask how the signal as function depending on the time $x\in \R$ looks at the place $t\geq 0$ on the fiber. Adding an $L^2(\R)$-perturbation corresponds to temporally limited changes of the input signal. In particular, one may switch off the periodic signal for finitely many times as illustrated in Figure \ref{figure_missing_teeth}.
    Global existence of the perturbed solutions then translates to the observation that the fiber has infinite length while stability of $\phi$ is interpreted as that the signal stays close to $\phi$ at any point $t \geq 0$ on the fiber.
\end{remark}

\begin{figure}[h!]
\centering
\includegraphics[width=0.6\linewidth]{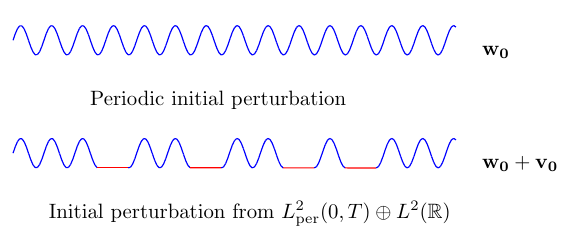}
\caption{For the sake of illustration, we reduce to the real part of an initial perturbation $\w_0 + \v_0$ with $\w_0 \in L_{\textrm{\textrm{per}}}^2(0,T)$ and $\v_0 \in L^2(\R)$. This figure demonstrates that $\v_0$ can in particular be chosen such that $\v_0 + \w_0$ coincides with $\w_0$ except for finitely many periods for which the signal vanishes, which explains the name tooth space for $L^2_{\textrm{\textrm{per}}}(0,T) \oplus L^2(\R)$ ("knocked out teeth").}
\label{figure_missing_teeth}
  \end{figure}
\subsection{Unmodulated perturbation equations}
Given a solution $\u(t) = \w(t) + \v(t)$ of (\ref{LLE_real}), we derive the unmodulated perturbation equations by
splitting the perturbation as
\begin{align*}
\tilde{\u}(t) = \u(t) - \phi = (\w(t) - \phi)+ \v(t)
\text{ and setting }  \tilde{\w}(t) = \w(t) - \phi.    
\end{align*}
This gives the coupled perturbation system
\begin{align}
\label{LLE_perturbed}
    \begin{split} 
   & \tilde{\w}_t = \mathcal{L}_0(\phi)\tilde{\w} + \mathcal{R}_1(\phi)(\tilde{\w})\\
        &\tilde{\w}(0) =  \mathbf{w}_0,
    \end{split}
    \begin{split} 
   & \v_t = \mathcal{L}_0(\phi)\v + \mathcal{R}_2(\phi)(\tilde{\w},\v)\\
        &\v(0) =  \v_0,
    \end{split}
\end{align}
where $\mathcal{L}_0(\phi)$ is the linearization of (\ref{LLE_real}) about $\phi$, given by
\begin{align}
\label{linearization}
     &\mathcal{L}_0(\phi) = \mathcal{J} \begin{pmatrix}-\beta \partial_x^2 - \alpha + 3\phi_r^2 + \phi_i^2 & 2\phi_r\phi_i \\
    2\phi_r\phi_i  & -\beta \partial_x^2 - \alpha + \phi_r^2 + 3\phi_i^2  \end{pmatrix}  - \mathcal{I},
    \end{align}
        first residual nonlinearity is given by
    \begin{align}
    \label{def_R_1}
    \mathcal{R}_1(\phi)(\tilde{\w}) &= \mathcal{N}(\tilde{\w} + \phi) - \mathcal{N}(\phi) - \mathcal{N}'(\phi)\tilde{\w},
    \end{align}
    and the second residual nonlinearity is defined by
    \begin{align}
    \label{def_R_2}
    &\mathcal{R}_2(\phi)(\tilde{\w}, \v) = \mathcal{R}_1(\phi)(\v+\tilde{\w}) - \mathcal{R}_1(\phi)(\tilde{\w})  =  \mathcal{R}_{2,1}(\phi)(\tilde{\w}, \v) + \mathcal{R}_{2,2}(\phi)(\tilde{\w}, \v),
    \end{align}
    with
    \begin{align*}
    \mathcal{R}_{2,1}(\phi)(\tilde{\w}, \v) = \mathcal{N}(\v+\tilde{\w} + \phi) &- \mathcal{N}(\tilde{\w}+\phi) - \mathcal{N}'(\tilde{\w} + \phi)\v, \quad  \mathcal{R}_{2,2}(\phi)(\tilde{\w}, \v) = \mathcal{N}'(\tilde{\w} + \phi)\v - \mathcal{N}'(\phi)\v.
\end{align*}
Fix some constant $K>0$.  Then, there exists a constant $C>0$ such that for $\v,\tilde{\w}\in \mathbb{C}$ with $|\v|, |\tilde{\w}|\leq K$ we have the nonlinear bounds
\begin{align}
\label{formal_nonlinear bounds}
    |\mathcal{R}_1(\phi)(\tilde{\w})| \leq C |\tilde{\w}|^2, \quad |\mathcal{R}_{2,1}(\phi)(\tilde{\w}, \v)| \leq C |\v|^2, \quad  |\mathcal{R}_{2,2}(\phi)(\tilde{\w}, \v)| \leq C |\v||\tilde{\w}|.
\end{align}
For the local wellposedness of (\ref{LLE_perturbed}), we refer to \S\ref{section_well_posed}.

We also abbreviate $\mathcal{L}_0 = \mathcal{L}_0(\phi)$, $\mathcal{R}_1 = \mathcal{R}_1(\phi)$, $\mathcal{R}_2 = \mathcal{R}_2(\phi)$, $\mathcal{R}_{2,1} = \mathcal{R}_{2,1}(\phi)$ and $\mathcal{R}_{2,2} = \mathcal{R}_{2,2}(\phi)$ whenever $\phi$ is the original periodic wave profile.
\subsection{Spectral assumptions on \texorpdfstring{$\phi$}{Lg}}
\label{section_spectrum}

Consider the Bloch operators
$\El(\xi) = e^{-i\xi \cdot} \mathcal{L}_0 e^{i\xi \cdot}$, $\xi\in[-\frac{\pi}{T},\frac{\pi}{T})$,   
posed on $L_{\mathrm{\textrm{per}}}^2(0,T)$ with domain $D(\El(\xi)) = H_{\mathrm{\textrm{per}}}^2(0,T)$. Since $\El(\xi)$ has compact resolvent, its spectrum consists of isolated eigenvalues of finite algebraic multiplicity only.

We introduce the standard \emph{diffusive spectral stability} assumptions, cf.~\cite{LLE_periodic, haragus, uniform,simon}.
\begin{itemize}
\setlength\itemsep{0em}
\item[\namedlabel{assD1}{\upshape (D1)}] We have $\sigma_{L^2}(\El_0)\subset\{\lambda\in\C:\Re(\lambda)<0\}\cup\{0\}$;
\item[\namedlabel{assD2}{\upshape (D2)}] There exists a constant $\theta>0$ such that for any $\xi\in [-\frac{\pi}{T},\frac{\pi}{T})$ we have $\Re\,\sigma_{L^2_{\textrm{per}}(0,T)}(\El(\xi))\leq-\theta \xi^2$;
\item[\namedlabel{assD3}{\upshape (D3)}] $0$ is a simple eigenvalue of $\El(0)$.
\end{itemize}
The spectrum of $\El_0$ on $L^2(\R)$ is the union of the spectra of the Bloch operators, i.e.,
\begin{align} \label{Blochspecdecomp}
\sigma_{L^2}(\El_0) = \bigcup_{\xi \in [-\frac{\pi}{T},\frac{\pi}{T})} \sigma_{L^2_{\textrm{per}}(0,T)}(\El(\xi)).
\end{align}
The spectrum $\sigma_{L^2}(\El_0)$ is purely essential, see e.g. \cite{Gardner, Kapitula}.

We emphasize that the periodic solutions of (\ref{LLE}) established in  \cite{HD} and \cite{BengeldeRijk} satisfy \ref{assH1} and \ref{assD1}-\ref{assD3}.

For the well-known consequences of the diffusive spectral stability assumptions, we refer to \cite[Lemma 2.1]{haragus} and references therein. First, Assumption~\ref{assD3} together with the translational invariance of (\ref{LLE}) imply that the kernel of $\El(0)$ is spanned by $\phi'$. Therefore, $0$ is also a simple eigenvalue of the adjoint operator $\El(0)^*$. By $\smash{\widetilde{\Phi}_0} \in H^2_{\mathrm{\textrm{per}}}(0,T)$, we denote the corresponding eigenfunction satisfying 
\begin{align*} 
\big\langle \widetilde{\Phi}_0,\phi'\big\rangle_{L^2(0,T)} = 1.\end{align*}

The spectral projection onto $\textrm{span}\{\phi'\}$ is given by
\begin{align*}
    \Pi(0)g = \phi'\langle\widetilde{\Phi}_0,g \rangle_{L^2(0,T)} , \quad g \in L^2_{\textrm{\textrm{per}}}(0,T)
\end{align*}
and we refer to \cite[Lemma 2.1]{haragus} for properties of $\sigma_{L^2}(\mathcal{L}_0)$ and  $\sigma_{L^2_{\textrm{per}}(0,T)}(\El(\xi))$.

\begin{remark}[Spectrum on modulation space]
The identity (\ref{Blochspecdecomp}) also holds on $C_{\textrm{ub}}(\R)$ and in contrast $\sigma_{C_{\textrm{ub}}}(\El_0)$ consists entirely of continuous point spectrum, \cite{Gardner, Kapitula}. Since 
\begin{align}
\label{embedding_mod_spaces}
    C_{\mathrm{ub}}^{m+2}(\R) \hookrightarrow M_{\infty,1}^m(\R)  \hookrightarrow C_{\mathrm{ub}}^{m}(\R),
\end{align} 
 for any $m\in \mathbb{N}_0$, one also finds (\ref{Blochspecdecomp}) on $M_{\infty,1}(\R)$ instead of $L^2(\R)$ or $C_{\mathrm{ub}}(\R)$ by \cite[2.17 Proposition]{nagel}. In particular, $\sigma_{M_{\infty,1}}(\El_0)$ also consists entirely of continuous point spectrum.

We briefly explain (\ref{embedding_mod_spaces}): from \cite{Klaus_thesis}, it is known that $C_{b}^2(\R) \hookrightarrow M_{\infty,1}(\R) \hookrightarrow C_{b}(\R)$ and it remains to show that $M_{\infty,1} \hookrightarrow C_{\textrm{ub}}(\R)$. This holds due to the observation that $C^\infty(\R)$ lies dense in $M_{\infty,1}(\R)$, $||f||_{L^\infty} \lesssim ||f||_{M_{\infty,1}}$ and since the uniform limit of uniformly continuous functions is uniformly continuous itself. Similarly, one argues for $m \geq 1$.
\end{remark}

\subsection{Formulation of main result}
\label{section_main_result}
We are now in the position to state our main result.
\begin{theorem}
\label{main_result}
Assume \ref{assH1} and \ref{assD1}-\ref{assD3}. There exist constants $C,\varepsilon>0$ such that for initial data $\w_0 \in H_{{\mathrm{per}}}^6(0,T)$ and $\v_0 \in H^3(\R)$ with 
$$E_0 := ||\w_0+\v_0||_{H^6_{\mathrm{per}}(0,T)\oplus H^3(\R)} < \varepsilon$$
there exist a unique solution 
\begin{align}
\label{properties_u_in_c_ub}
\u(t) 
 \in C([0,\infty); C^2_{\mathrm{ub}}(\R)) \cap C^1([0,\infty);C_{\mathrm{ub}}(\R))     
\end{align}
of (\ref{LLE_real}) with initial condition $\u(0) = \phi + \w_0 + \v_0$, 
 some smooth function $\gamma \in C([0,\infty), H^5(\R))$ and a constant $\sigma_* \in \R$ with the properties
\begin{align}
    \begin{split}
    \label{u_linfty_1}
           ||\u(t) - \phi||_{L^\infty} &\leq C E_0 
    \end{split}
    \end{align}
and
\begin{align}
\label{u_l_infty_2}
\begin{split}
    ||\u(\cdot,t) - \phi_0(\cdot + \sigma_* + \gamma(\cdot,t))||_{L^\infty} &\leq C (1+t)^{-\frac{3}{4}} E_0,
    \end{split}
\end{align}
for all $t\geq 0$.

\end{theorem}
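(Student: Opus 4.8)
The plan is to set up a modulated perturbation ansatz for both the periodic component $\tilde{\w}$ and the localized component $\v$, and then close a nonlinear iteration estimate in a norm combining $H^l_{\textrm{per}}(0,T)$-control on the periodic part with $H^k$-control on the localized part. The starting point is the linearized semigroup $e^{\mathcal{L}_0 t}$: using the Bloch decomposition (\ref{Blochspecdecomp}) together with the diffusive spectral stability assumptions \ref{assD1}--\ref{assD3}, one obtains the standard decomposition $e^{\mathcal{L}_0 t} = \mathcal{S}_p(t) + \widetilde{\mathcal{S}}(t)$, where $\mathcal{S}_p(t)$ is the low-frequency critical part carrying the slowly decaying $\phi'$-direction and $\widetilde{\mathcal{S}}(t)$ is exponentially damped. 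On the localized ($L^2(\R)$) side this yields the familiar linear estimates $\|\mathcal{S}_p(t) \d_x^j \v_0\|_{L^\infty} \lesssim (1+t)^{-\frac{1}{4}-\frac{j}{2}}\|\v_0\|_{L^1\cap L^2}$ and analogous bounds with $\d_x$ pulled out, while on the co-periodic side $e^{\mathcal{L}(0) t}$ acting on $\tilde{\w}_0$ decays exponentially once the $\Pi(0)$-component is modulated out. I would quote these from \cite{haragus} and \cite{LLE_periodic}.

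Next I would introduce the modulation. Following the strategy indicated in \S\ref{section_strategy}, write for the periodic part $\tilde{\w}(t) = w(\cdot,t) + \sigma_p(t)\phi' + (\text{higher order})$ so that a scalar temporal phase $\sigma_p(t)$ absorbs the secular $\Pi(0)$-mode, and for the localized part introduce a spatio-temporal phase $\gamma(x,t)$, setting
\begin{align*}
\mathring{\v}(x,t) = \v(x,t) + \gamma(x,t)\phi'(x) + (\text{quadratic corrections}).
\end{align*}
The unknowns are then $(w, \sigma_p)$ (co-periodic) and $(\mathring{\v}, \gamma)$ (localized), governed by a coupled system obtained from (\ref{LLE_perturbed}) via Duhamel: $w$ and $\mathring{\v}$ solve integral equations driven by $\widetilde{\mathcal{S}}(t)$ applied to the residuals $\mathcal{R}_1$, $\mathcal{R}_{2,1}$, $\mathcal{R}_{2,2}$ plus commutator terms from the modulation, while $\sigma_p'(t)$ and $\d_x\gamma$, $\d_t\gamma$ are recovered by projecting the equations onto the critical eigenspace (using $\widetilde{\Phi}_0$) so that all genuinely secular contributions are moved onto the phases. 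The nonlinear bounds (\ref{formal_nonlinear bounds}) give, pointwise, $|\mathcal{R}_1| \lesssim |\tilde{\w}|^2$, $|\mathcal{R}_{2,1}| \lesssim |\v|^2$, $|\mathcal{R}_{2,2}| \lesssim |\v||\tilde{\w}|$; the key observation is that $\tilde{\w}$, being co-periodic, decays exponentially in the modulated variables, so $\mathcal{R}_{2,2}$ (the worst coupling term) is effectively $|\v| \cdot e^{-\mu t}$ and is integrable against the linear $L^2$-$L^\infty$ kernels.

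Then I would run the nonlinear iteration. Define a template function such as
\begin{align*}
\Theta(t) = \sup_{0\le s\le t}\Big[ (1+s)^{\frac{3}{4}}\|\mathring{\v}(s)\|_{L^\infty} + \|\mathring{\v}(s)\|_{H^3} + (1+s)^{\frac14}\|\d_x\gamma(s), \d_t\gamma(s)\|_{L^\infty} + e^{\mu s}\|w(s)\|_{H^6_{\textrm{per}}} + e^{\mu s}|\sigma_p'(s)| \Big],
\end{align*}
together with a bound on $\|\gamma(s)\|_{H^5}$ and on $|\sigma_p(s)|$, and show via the Duhamel formulas, the linear estimates, and the nonlinear bounds that $\Theta(t) \lesssim E_0 + \Theta(t)^2$, which for $E_0$ small forces $\Theta(t) \lesssim E_0$ uniformly in $t$ by a continuity argument. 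From $\sigma_p(t) \to \sigma_*$ (since $\sigma_p'$ is exponentially integrable) and the reconstruction $\u(t) - \phi_0(\cdot + \sigma_* + \gamma(\cdot,t))$ one then reads off (\ref{u_l_infty_2}) with rate $(1+t)^{-3/4}$, while (\ref{u_linfty_1}) follows from the cruder bound $\|\u(t)-\phi\|_{L^\infty} \lesssim \|w\|_{L^\infty} + \|\mathring{\v}\|_{L^\infty} + \|\gamma\|_{L^\infty}(\|\phi'\|_{L^\infty}+\dots) \lesssim E_0$ once one notes $\|\gamma(t)\|_{L^\infty}$ stays $O(E_0)$ (this uses that $\d_x\gamma$ and $\d_t\gamma$ decay but $\gamma$ itself need only stay bounded). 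Regularity (\ref{properties_u_in_c_ub}) comes from Sobolev embedding $H^3(\R)\hookrightarrow C^2_{\textrm{ub}}$ combined with the parabolic smoothing of $e^{\mathcal{L}_0 t}$ and local well-posedness from \S\ref{section_well_posed}.

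\textbf{Main obstacle.} The crux is controlling the interaction between the co-periodic and localized components: the term $\mathcal{R}_{2,2}(\phi)(\tilde{\w},\v) = (\mathcal{N}'(\tilde{\w}+\phi) - \mathcal{N}'(\phi))\v$ couples a non-decaying-in-$L^\infty$ object ($\v$ only decays like $(1+t)^{-1/4}$) against $\tilde{\w}$, and naively the Duhamel integral $\int_0^t \|\widetilde{\mathcal{S}}(t-s)\mathcal{R}_{2,2}(s)\|\,ds$ need not close. The resolution — and the technical heart of the paper, via the novel $L^2$-$L^\infty$ estimates advertised in the introduction — is that $\tilde{\w}$ lives on $L^2_{\textrm{per}}(0,T)$ where (after modulating out $\Pi(0)$) it decays \emph{exponentially}, so one splits $\mathcal{R}_{2,2}$ using Hölder as $\|\v(s)\|_{L^\infty}\|\tilde{\w}(s)\|_{L^\infty}$ or, where an extra derivative must be distributed, as $\|\v(s)\|_{L^2}\|\tilde{\w}(s)\|_{L^\infty}$, and the exponential factor $e^{-\mu s}$ makes every resulting integral converge uniformly in $t$. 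Getting the bookkeeping of derivatives right — ensuring at most $k=3$ derivatives ever land on $\v$ and that the $\phi'$-weighted correction terms in the modulated variables do not destroy the algebra — is where the care is needed; this is also why the theorem demands the relatively high regularity $H^6_{\textrm{per}}\oplus H^3$.
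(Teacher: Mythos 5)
Your overall skeleton (modulate the periodic component by a temporal phase, the localized component by a spatio-temporal phase, exploit exponential decay of the co-periodic part to tame the coupling $\mathcal{R}_{2,2}$, close a template-function iteration, then refine in $L^\infty$) is the same as the paper's, and your ``main obstacle'' paragraph correctly identifies the role of $\mathcal{R}_{2,2}$. However, there is a genuine gap in how you control regularity. You invoke ``the parabolic smoothing of $e^{\mathcal{L}_0 t}$'', but the principal part of (\ref{LLE_real}) is $-\beta i\partial_x^2$: the linearization is dispersive with only zeroth-order damping, and its semigroup has no smoothing whatsoever. At the same time the modulated equation (\ref{modulated_v}) unavoidably contains derivative-losing terms ($\partial_x\mathcal{S}$, $\partial_x^2\mathcal{P}$, $\sigma_t\hat{\v}_x$, $\mathcal{T}$), so a Duhamel iteration in $H^3$ cannot close by itself. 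This is precisely why the paper introduces, in addition to the inverse-modulated perturbation, the forward-modulated perturbation $\mathring{\v}$ of (\ref{def_forward_v}), which satisfies the semilinear equation (\ref{forward_equation}) and admits the nonlinear damping estimate of Proposition \ref{proposition_damping}, together with Lemma \ref{lemma_relate} relating $\mathring{\v}$ and $\hat{\v}$. Your additive ansatz $\v+\gamma\phi'+\text{(quadratic corrections)}$ supplies neither this semilinear structure nor the cancellation of the critical terms $\sigma_t\hat{\w}$ and $(\partial_t-\mathcal{L}_0)(\phi'\sigma)$ that makes every $\hat{\w}$- or $\sigma_t$-contribution in (\ref{modulated_v}) come paired with a decaying factor $\gamma_x$, $\gamma_t$ or $\hat{\v}$; without that pairing the linear-in-$\hat{\v}$ coupling terms are not perturbative.

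The second problem is the decay bookkeeping. You quote $L^1\cap L^2$-based linear bounds for $\v_0$, but the theorem assumes only $\v_0\in H^3(\R)$, and, as Remark \ref{remark_on_decay_half} explains, the coupling terms $\sigma_t\hat{\v}_x$, $(1-\gamma_x)\mathcal{R}_{2,2}$ and $\mathcal{T}$ can only ever be measured in $L^2$, never in $L^1$. Consequently the $L^2$-level decay of $\hat{\v}$, $\mathring{\v}$, $\gamma_x$, $\gamma_t$ saturates at $(1+t)^{-1/2}$ (this is (\ref{l2_results})), and the rate $(1+t)^{-3/4}$ is recovered only afterwards, in $L^\infty$, through the $H^1\to L^\infty$ bounds of Proposition \ref{prop_linear_l_infty} applied to the already-closed $L^2$ iteration. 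Your template, with an unweighted $\|\mathring{\v}(s)\|_{H^3}$ and only $(1+s)^{1/4}$ on $\|(\gamma_x,\gamma_t)\|_{L^\infty}$, does not carry enough algebraic decay on the Sobolev norms: for instance the quadratic part of $\mathcal{R}_3$ is then only $O(E_0)$ in $H^1$, so the integral $\int_0^t e^{-\delta_1(t-s)}\|\mathcal{R}_3(s)\|_{H^1}\,ds$ is merely bounded rather than $O((1+t)^{-3/4})$, and the claimed rate in (\ref{u_l_infty_2}) does not follow. To repair this you would need to first close the iteration with the weights $(1+s)^{1/2}$ on $\|\mathring{\v}\|_{H^3}$ and $\|(\gamma_x,\gamma_s)\|_{H^4\times H^3}$ (as in \S\ref{section_iteration}), using the nonlinear bounds of Lemma \ref{lemma_all_nonlinear_bounds} in which every periodic factor carries the exponential rate from Theorem \ref{theorem_purely_periodic}, and only then run the refined $L^\infty$ step of \S\ref{section_l_infty_control}.
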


 We briefly discuss the regularity assumptions in Theorem \ref{main_result}. The assumption $\v_0 \in H^3(\R)$ is justified by the fact that the regularity control on $\v$ mainly  proceeds along the lines of a standard $L^1\cap H^k$-nonlinear stability analysis \cite{haragus, JONZ} using the nonlinear damping estimate established in \cite{zumbrun_remark}. The reason for the regularity assumption on $\w_0$ is that $\v$  is considered as a perturbation of the periodic solution $\w$, which  yields expressions in the modulated perturbation equations for $\v$ where $\w$ appears with two spatial derivatives. As we need to control $\v$ in $H^3(\R)$, this leads to three more derivatives on $\w$. Therefore, we need to bound the fifth derivative in $L^\infty$ which is covered by assuming $\w_0 \in H_{\textrm{\textrm{per}}}^6(0,T)$.
Since we estimate the $H^1$-norm of the residual in Section \ref{section_l_infty_control} in order to find an $L^\infty$-estimate (in the spirit of the embedding $H^1(\R) \hookrightarrow L^\infty(\R)$), it suffices to demand $\w_0 \in H^5_{\textrm{per}}(0,T)$ and $\v_0 \in H^2(\R)$ when one only aims to establish pure $L^2$-estimates on $\v$   and henceforth obtain (\ref{u_l_infty_2}) in $L^\infty(\R)$ with lower decay rate $(1+t)^{-\frac{1}{2}}$. 

 Comparing (\ref{u_l_infty_2}) to the associated estimate in \cite{haragus}, we loose an algebraic decay factor of $\frac{1}{4}$ and one might ask whether we can compensate this lack of decay by taking the assumption $\v_0 \in L^1(\R)\cap H^3(\R)$. However, this assumption does not improve the decay rates due to the coupling terms $\mathcal{R}_2(\tilde{\w},\v)$ in (\ref{LLE_perturbed}) which cannot be controlled in $L^1(\R)$ but only in $L^2(\R)$, cf. Remark \ref{remark_on_decay_half}. 
 
 Another interesting contrast to the result in \cite{haragus} is the constant phase shift arising in the modulational estimate (\ref{u_l_infty_2}). The reason for this constant phase shift precisely originates from the fact the we not only enclose localized- but also co-periodic perturbation. This is reflected in designing the modulational approach to exploit the orbital stability result from \cite{LLE_periodic}. 

\begin{remark}[Uniqueness of solutions]
    We also comment on the uniqueness of solutions stated in Theorem \ref{main_result}. Concerning nonlocalized solutions of (\ref{LLE}), local well-posedness of (\ref{LLE}) with initial data from the modulation space $M_{\infty,1}^m(\R)$, $m\in \mathbb{N}_0$, can be established by precisely following the steps from \cite[Section 4.2]{Leonid_thesis} as the principal linear part of the equation (\ref{LLE}) generates a $C_0$-semigroup on $M_{\infty,1}(\R)$.
      That is, there exists a unique (mild) solution as element from $C([0,t), M_{\infty,1}(\R))$, $t>0$, whenever $\u_0\in C_{\textrm{ub}}^2(\R) \hookrightarrow M_{\infty,1}(\R)$. In particular, this shows that the solution $\u(t)$ of (\ref{LLE_real}) with $\u(0) = \w_0 + \v_0$ in Theorem \ref{main_result} satisfying (\ref{properties_u_in_c_ub}) is unique.
\end{remark}

\subsection{Strategy of proof}
\label{section_strategy}
The main task in the proof of Theorem \ref{main_result} is to find a suitable way to modulate the perturbations allowing to close a nonlinear argument through iterative estimates on their Duhamel formulae.  The construction of the perturbations $\hat{\w}(t)$ and $\v(t)$ shows that $\hat{\w}(t)$ is independent of $\v(t)$. Therefore, we first modulate $\hat{\w}(t)$ by introducing a temporal modulation function $\sigma(t)$. Precisely, we choose $\hat{\w}(x,t) = \w(x+\sigma(t),t) - \phi(x)$. Then, we do not only interpret $\v(t)$ as perturbation of $\w(t)$ but do the same for their modulated variants. This leads to the following inverse-modulated perturbation $$\hat{\v}(x,t) = \hat{\u}(x,t) - \hat{\w}(x,t) :=  \u(x-\sigma(t)-\gamma(t),t) - \hat{\w}(x,t) - \phi(x).$$ We arrive at a coupled system which allows to make an a-posteriori choice of the spatio-temporal phase modulation $\gamma: [0,\infty)\times\R \rightarrow \R^2$. We are then in the position to exploit the exponential decay of $|\sigma_t(t)|$ and $||\hat{\w}(t)||_{L^\infty}$.
We also introduce the forward-modulated perturbation 
\begin{align*}
\mathring{\v}(x,t) &= \mathring{\u}(x,t) - \hat{\w}(x+\sigma(t)+\gamma(x,t),t) \\
&:= \u(x,t)  - \phi(x+\sigma(t) + \gamma(x,t),t) - \hat{\w}(x+\sigma(t) + \gamma(x,t),t)
\end{align*}
which obeys a semilinear equation and can thus be used to control regularity in the nonlinear iteration. We control regularity by deriving a nonlinear damping estimate on $\mathring{\v}(t)$ and subsequently  relating $\mathring{\v}(t)$ to $\hat{\v}(t)$. 
Finally, we show the following stability estimates: for suitable constants $C,\delta>0$ and small initial conditions
\begin{align*}
    E_0 = E_p + E_l \textrm{ with } E_p = ||\w_0||_{H^6_{\textrm{per}}} \textrm{ and } E_l = ||\v_0||_{H^3},
\end{align*}
 we find
\begin{align}
\label{strategy_per_result}
    |\sigma_t(t)|, ||\hat{\w}(t)||_{H^6_{\textrm{per}}(0,T)}  &\leq C e^{-\delta t}E_p, \quad |\sigma| \leq C E_p, \quad t\geq 0,
\end{align}
and
\begin{align}
\label{l2_results}
   ||\gamma_x(t)||_{H^4}, ||\gamma_t(t)||_{H^3}, ||\mathring{\v}(t)||_{H^3}  \leq  C(1+t)^{-\frac{1}{2}}E_l, \quad 
    ||\gamma(t)||_{H^5} \leq C E_l, \quad t\geq 0. 
\end{align}
Using $L^\infty$-estimates on the propagators, we arrive at the refined $L^\infty$-estimate
    \begin{align}
    \label{l_infty_result}
    ||\mathring{\v}(t)||_{L^\infty}\leq C (1+t)^{-\frac{3}{4}} E_l, \quad t\geq 0.
\end{align}
   Applying  (\ref{strategy_per_result}) and (\ref{l_infty_result}), Theorem \ref{main_result} follows from  the observations
\begin{align*}
    ||\mathring{\u}(t)||_{L^\infty} &\leq ||\hat{\w}(\cdot + \sigma(t) + \gamma(\cdot,t),t)||_{L^\infty} + ||\mathring{\v}(t)||_{L^\infty}, \quad t\geq 0,
    \end{align*}
    and 
   \begin{align*}
    ||\u(t) - \phi||_{L^\infty}& \leq C\left(||\mathring{\u}(t)||_{L^\infty}  + |\sigma(t)| + ||\gamma(t)||_{L^\infty} \right), \quad t\geq 0,
\end{align*}
 and Sobolev embedding. The estimate (\ref{strategy_per_result}) can be derived along the lines of the co-periodic stability analysis in \cite{LLE_periodic} and therefore the remaining task of this paper is to prove (\ref{l2_results}) and (\ref{l_infty_result}).

\paragraph*{Notation.} Let $S$ be a set, and let $A, B \colon S \to \R$. Throughout the paper, the expression ``$A(x) \lesssim B(x)$ for $x \in S$'', means that there exists a constant $C>0$, independent of $x$, such that $A(x) \leq CB(x)$ holds for all $x \in S$. 

\paragraph*{Acknowledgments.} The author is grateful to Bj\"orn de Rijk for valuable discussions and support during all stages of this work.  This project is funded by the Deutsche Forschungsgemeinschaft (DFG, German Research Foundation) -- Project-ID 491897824.
\section{Linear estimates}
\label{section_linear}
We collect well-known facts about the semigroup generated by the linearization $\mathcal{L}_0$ on $L_{\textrm{per}}^2(0,T)$ and on $L^2(\R)$. 
\subsection{Semigroup decomposition  and estimates on \texorpdfstring{$L_{\textrm{per}}^2(0,T)$}{Lg}}
\label{section_linear_per}
Let $\chi \colon [0,\infty) \to \R$ be  a smooth temporal cut-off function  satisfying $\chi(t) = 0$ for $t \in [0,1]$ and $\chi(t) = 1$ for $t \in [2,\infty)$. 
We write \begin{align*}
   \tilde{S}_1(t) =  (e^{\mathcal{L}_0 t}- \chi(t)\Pi(0))\mathbf{g}
\end{align*}
and have the following linear estimate.
\begin{proposition}[\cite{LLE_periodic}, \cite{perkins21}]
\label{linear_exp}
    Assume \ref{assH1} and \ref{assD1}-\ref{assD3}. There exist constants $\delta_0,C>0$ such that the estimate
    \begin{align*}
        ||\tilde{S}_1(t) \mathbf{g}||_{H^6_{\textrm{per}}(0,T)} \leq C e^{-\delta_0 t}||\mathbf{g}||_{H^6_{\textrm{per}}(0,T)}
    \end{align*}
    is valid for all $\mathbf{g} \in {H^6_{\textrm{per}}(0,T)}$.
\end{proposition}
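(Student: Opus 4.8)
The plan is to prove Proposition~\ref{linear_exp} via the standard Bloch-wave / Floquet decomposition of the semigroup $e^{\mathcal{L}_0 t}$ on $L^2_{\textrm{per}}(0,T)$, combined with the spectral gap produced by \ref{assD1}--\ref{assD3}. First I would recall that on $L^2_{\textrm{per}}(0,T)$ the operator $\mathcal{L}_0 = \El(0)$ has spectrum consisting of isolated eigenvalues of finite multiplicity only, and that by \ref{assD1} together with \ref{assD3} the point $\lambda = 0$ is a simple, isolated eigenvalue with spectral projection $\Pi(0)g = \phi' \langle \widetilde{\Phi}_0, g\rangle_{L^2(0,T)}$, while the rest of the spectrum lies in a half-plane $\{\Re\lambda \le -2\delta_0\}$ for some $\delta_0 > 0$. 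This uses that $\sigma_{L^2_{\textrm{per}}}(\El(0))$ is discrete and that \ref{assD2} at $\xi = 0$ forces all nonzero eigenvalues strictly into the left half-plane, so a uniform spectral gap of width $2\delta_0$ exists.

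Next I would split $e^{\mathcal{L}_0 t} = e^{\mathcal{L}_0 t}\Pi(0) + e^{\mathcal{L}_0 t}(I - \Pi(0))$. On the range of $\Pi(0)$, which is the one-dimensional space $\mathrm{span}\{\phi'\}$ on which $\mathcal{L}_0$ acts as multiplication by $0$, we simply have $e^{\mathcal{L}_0 t}\Pi(0) = \Pi(0)$, so $\tilde{S}_1(t)\mathbf{g} = (e^{\mathcal{L}_0 t} - \chi(t)\Pi(0))\mathbf{g} = e^{\mathcal{L}_0 t}(I-\Pi(0))\mathbf{g} + (1-\chi(t))\Pi(0)\mathbf{g}$. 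The second term is supported on $t \in [0,2]$ and is bounded by $C\|\mathbf{g}\|_{H^6_{\textrm{per}}}$ there (using that $\Pi(0)$ is bounded on $H^6_{\textrm{per}}(0,T)$ since $\phi, \widetilde{\Phi}_0$ are smooth and $T$-periodic), so it is trivially dominated by $Ce^{-\delta_0 t}\|\mathbf{g}\|_{H^6_{\textrm{per}}}$ after enlarging $C$. For the first term, the spectral gap gives $\|e^{\mathcal{L}_0 t}(I-\Pi(0))\|_{L^2_{\textrm{per}}\to L^2_{\textrm{per}}} \le Ce^{-\delta_0 t}$ by the spectral mapping theorem for sectorial operators (or directly via the Hille--Yosida / resolvent estimate, noting $\mathcal{L}_0$ generates an analytic semigroup because its principal part is $\pm i\partial_x^2$ composed with $\mathcal{J}$ plus a bounded perturbation — actually $\El(0)$ is a relatively bounded perturbation of $-\beta\mathcal{J}\partial_x^2$, and one invokes the known generation result from \cite{LLE_periodic}). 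To upgrade from $L^2_{\textrm{per}}$ to $H^6_{\textrm{per}}$, I would use parabolic smoothing: for $t \ge 1$, $\|e^{\mathcal{L}_0 t}(I-\Pi(0))\mathbf{g}\|_{H^6_{\textrm{per}}} \le \|e^{\mathcal{L}_0}(\cdot)\|_{L^2\to H^6}\,\|e^{\mathcal{L}_0(t-1)}(I-\Pi(0))\mathbf{g}\|_{L^2_{\textrm{per}}} \le C e^{-\delta_0(t-1)}\|\mathbf{g}\|_{L^2_{\textrm{per}}} \le C e^{-\delta_0 t}\|\mathbf{g}\|_{H^6_{\textrm{per}}}$; for $t \in [0,1]$ one uses boundedness of $e^{\mathcal{L}_0 t}$ on $H^6_{\textrm{per}}(0,T)$ directly. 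Alternatively, since the statement is cited verbatim from \cite{LLE_periodic} and \cite{perkins21}, one can simply invoke those references for the $H^6_{\textrm{per}}$-estimate and only reassemble the pieces as above.

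The main obstacle I anticipate is not analytical subtlety but rather making the smoothing/interpolation step clean: one needs that $e^{\mathcal{L}_0 t}$ restricted to $(I-\Pi(0))L^2_{\textrm{per}}$ is indeed analytic and maps into $H^{6}_{\textrm{per}}$ for $t>0$ with the norm of that map integrable (or at least bounded) near $t=1$, and that the resolvent $(\lambda - \El(0))^{-1}$ satisfies suitable bounds in $H^6_{\textrm{per}}$ on a contour enclosing only the part of the spectrum with $\Re\lambda \le -2\delta_0$. This is where one leans on the known sectoriality of the Lugiato--Lefever linearization; since $-\beta\mathcal{J}\partial_x^2$ on $L^2_{\textrm{per}}(0,T)$ is skew-adjoint-like only, one should be slightly careful, but the damping term $-\mathcal{I}$ and the structure established in \cite{LLE_periodic} guarantee the needed generation and smoothing properties. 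Since Proposition~\ref{linear_exp} is explicitly attributed to \cite{LLE_periodic, perkins21}, the cleanest route in the paper is to state that the bound follows from the spectral decomposition recalled in \S\ref{section_spectrum} together with \cite[proof of the corresponding estimate]{LLE_periodic} and the parabolic smoothing just described, and to spell out only the elementary reduction $\tilde{S}_1(t)\mathbf{g} = e^{\mathcal{L}_0 t}(I-\Pi(0))\mathbf{g} + (1-\chi(t))\Pi(0)\mathbf{g}$ in detail.
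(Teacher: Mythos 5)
Your reduction $\tilde{S}_1(t)\mathbf{g} = e^{\mathcal{L}_0 t}(I-\Pi(0))\mathbf{g} + (1-\chi(t))\Pi(0)\mathbf{g}$ and the identification of the spectral gap from \ref{assD1}, \ref{assD3} and the compactness of the resolvent are fine, and they match what the cited references actually exploit (the paper itself offers no proof beyond the attribution to \cite{LLE_periodic, perkins21}). The genuine gap is in the two analytic ingredients you lean on: sectoriality and parabolic smoothing. The linearization $\mathcal{L}_0$ has principal part $-\beta\mathcal{J}\partial_x^2$, i.e.\ in the complex formulation $-i\beta\partial_x^2$, which is dispersive (Schr\"odinger-type), not parabolic; the damping $-\mathcal{I}$ merely shifts the spectrum and the group it generates is, up to the factor $e^{-t}$, unitary on $L^2_{\textrm{per}}(0,T)$. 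Consequently $e^{\mathcal{L}_0 t}$ is \emph{not} an analytic semigroup and does \emph{not} map $L^2_{\textrm{per}}(0,T)$ into $H^6_{\textrm{per}}(0,T)$ for $t>0$, so the step ``$\|e^{\mathcal{L}_0 t}(I-\Pi(0))\mathbf{g}\|_{H^6_{\textrm{per}}} \le \|e^{\mathcal{L}_0}\|_{L^2\to H^6}\|e^{\mathcal{L}_0(t-1)}(I-\Pi(0))\mathbf{g}\|_{L^2_{\textrm{per}}}$'' breaks down: the operator norm you insert is infinite. Likewise, ``spectral mapping for sectorial operators'' is not available, and for general $C_0$-semigroups (even with compact resolvent) the spectral bound need not equal the growth bound, so the $L^2_{\textrm{per}}$ decay estimate itself requires an argument.

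The repair is standard but different from what you wrote: work directly in $H^6_{\textrm{per}}(0,T)$, where $\El(0)$ has the same isolated eigenvalues (eigenfunctions are smooth and periodic) and $\Pi(0)$ is bounded because $\phi'$ and $\widetilde{\Phi}_0$ are smooth, and obtain the exponential decay of $e^{\mathcal{L}_0 t}(I-\Pi(0))$ from a Gearhart--Pr\"uss-type resolvent estimate on the Hilbert space: writing $\mathcal{L}_0 = \mathcal{A} + \mathcal{B}$ with $\mathcal{A} = \mathcal{J}(-\beta\partial_x^2-\alpha)-\mathcal{I}$ a constant-coefficient normal operator satisfying $\|e^{\mathcal{A}t}\| = e^{-t}$ and $\mathcal{B}$ a bounded multiplication operator, the resolvent of $\mathcal{L}_0$ is uniformly bounded on $\{\Re\lambda \ge -\delta_0\}\setminus B_r(0)$ for suitable $\delta_0, r>0$, which yields $\|e^{\mathcal{L}_0 t}(I-\Pi(0))\|_{H^6_{\textrm{per}}\to H^6_{\textrm{per}}} \le Ce^{-\delta_0 t}$ without any smoothing. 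This is exactly the content established in \cite{LLE_periodic} and \cite{perkins21}, which is why the paper simply cites them; if you want to reprove it, replace the analyticity/smoothing claims by this resolvent-based argument in $H^6_{\textrm{per}}(0,T)$.
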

\subsection{Semigroup decomposition and estimates on \texorpdfstring{$L^2(\R)$}{Lg}}
\label{section_linear_l2}
Assume \ref{assH1} and \ref{assD1}-\ref{assD3}. 
Like in \cite{haragus}, we decompose
\begin{align*}
    e^{\El_0 t}\mathbf{g} = \tilde{S}_2(t)\mathbf{g} + \phi' s_p(t)\mathbf{g},
\end{align*}
with $s_p(t) = 0$ for $t\in [0,1]$. We have the following linear estimates.
\begin{proposition}
\label{prop_linear_l_2}
   Assume \ref{assH1} and \ref{assD1}-\ref{assD3}. Let $l,j \in \mathbb{N}_0$ and $k \in \{0,1,2\}$. Then there exists a constant $C_{l,j}>0$ such that
   \begin{align*}
       ||\partial_x^l \partial_t^j s_p(t) \partial_x^k\mathbf{g}||_{L^2} &\leq C_{l,j} (1+t)^{-\frac{l+j}{2}} ||\mathbf{g}||_{L^2}, \quad \mathbf{g} \in L^2(\R) \\
       ||\partial_x^l \partial_t^j s_p(t) \mathbf{g}||_{L^2} &\leq C_{l,j} (1+t)^{-\frac{1}{4}-\frac{l+j}{2}} ||\mathbf{g}||_{L^1}, \quad \mathbf{g} \in L^2(\R)\cap L^1(\R),
   \end{align*}
   for all $t\geq 0$. Furthermore, there exists a constant $C>0$ such that
   \begin{align*}
       ||\tilde{S}_2(t) \mathbf{g}||_{L^2} \leq  C (1+t)^{-\frac{3}{4}} ||\mathbf{g}||_{L^1\cap L^2}, \quad  \mathbf{g} \in L^2(\R)\cap L^1(\R)
   \end{align*}
   for all $t\geq  0$, and,
   \begin{align*}
       ||\tilde{S}_2(t) \mathbf{g}||_{L^2} \leq C(1+t)^{-\frac{1}{2}} ||\mathbf{g}||_{L^2}, \quad  \mathbf{g} \in L^2(\R),
   \end{align*}
   for all $t\geq 0$.
\end{proposition}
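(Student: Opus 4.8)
The plan is to establish Proposition~\ref{prop_linear_l_2} by the now-standard Bloch-wave analysis of diffusively stable periodic waves, as carried out in \cite[Lemma 2.1 and Section 3]{haragus}; I would only sketch the adaptations needed here. First I would recall the Bloch representation: for $\mathbf{g}\in L^2(\R)$ one writes $e^{\El_0 t}\mathbf{g}(x) = \frac{1}{2\pi}\int_{-\pi/T}^{\pi/T} e^{i\xi x}\big(e^{\El(\xi)t}\check{\mathbf{g}}(\xi,\cdot)\big)(x)\,d\xi$, where $\check{\mathbf{g}}$ denotes the Bloch transform, which is an isometry (up to constants) from $L^2(\R)$ onto $L^2\big([-\pi/T,\pi/T);L^2_{\mathrm{per}}(0,T)\big)$ and satisfies the Hausdorff--Young-type bound $\|\check{\mathbf{g}}\|_{L^\infty_\xi L^2_x}\lesssim \|\mathbf{g}\|_{L^1}$. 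By \ref{assD1}--\ref{assD3} together with analytic perturbation theory, there is $\xi_0>0$ such that for $|\xi|\le\xi_0$ the operator $\El(\xi)$ has a single eigenvalue $\lambda(\xi)$ near the origin, depending analytically on $\xi$ with $\lambda(0)=0$, $\lambda(\xi) = -a\xi^2 + O(\xi^3)$ for some $a>0$ (using \ref{assD2}), with spectral projection $\Pi(\xi)$ and eigenfunction $\Phi(\xi,\cdot)$ analytic in $\xi$ and $\Phi(0,\cdot)=\phi'$. One then defines $s_p(t)\mathbf{g}$ as the contribution of this critical eigenvalue, smoothly cut off for small $t$ and for $|\xi|>\xi_0$; concretely, up to the cutoff,
\begin{align*}
\phi'(x)\,s_p(t)\mathbf{g}(x) = \frac{1}{2\pi}\int_{-\pi/T}^{\pi/T} \chi_{\xi_0}(\xi)\, e^{i\xi x}\, e^{\lambda(\xi)t}\,\Phi(\xi,x)\,\big\langle \widetilde{\Phi}(\xi,\cdot),\check{\mathbf{g}}(\xi,\cdot)\big\rangle\, d\xi,
\end{align*}
and $\tilde{S}_2(t)\mathbf{g} = e^{\El_0 t}\mathbf{g} - \phi' s_p(t)\mathbf{g}$ collects the remaining, exponentially damped part.

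For the $s_p$ estimates I would proceed as follows. Acting with $\partial_x^l\partial_t^j$ brings down factors $(i\xi)^{l'}$ from $e^{i\xi x}$ (the remaining $x$-derivatives hitting the periodic, uniformly bounded functions $\Phi,\phi'$) and factors $\lambda(\xi)^{j}\sim\xi^{2j}$ from $\partial_t^j e^{\lambda(\xi)t}$; combined with $|e^{\lambda(\xi)t}|\le e^{-a\xi^2 t/2}$ on $|\xi|\le\xi_0$, the worst term is $\int_{|\xi|\le\xi_0}|\xi|^{l+2j}e^{-a\xi^2 t/2}\,d\xi\cdot\|\check{\mathbf{g}}\|_{L^\infty_\xi L^2_x}$. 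A rescaling $\xi\mapsto\xi/\sqrt{t}$ gives the bound $(1+t)^{-\frac{1}{2}-\frac{l+2j}{2}}\lesssim(1+t)^{-\frac{1}{2}-\frac{l+j}{2}}$ in the $L^1\to L^2$ case; wait — the claimed exponent is $-\frac14-\frac{l+j}{2}$, which is weaker, so this crude argument already suffices (one in fact gains, consistent with \cite{haragus}). For the $L^2\to L^2$ case one uses instead $\|\check{\mathbf{g}}\|_{L^2_\xi L^2_x}\lesssim\|\mathbf{g}\|_{L^2}$ and Cauchy--Schwarz in $\xi$, i.e.\ $\big(\int_{|\xi|\le\xi_0}|\xi|^{2(l+2j)}e^{-a\xi^2 t}\,d\xi\big)^{1/2}\lesssim(1+t)^{-\frac14-\frac{l+2j}{2}}\le(1+t)^{-\frac{l+j}{2}}$ for $l+j\ge0$ — here one must be slightly careful that for $l=j=0$ the decay is only $(1+t)^{-1/4}$, which is still $\le(1+t)^0$, so the stated bound (with no decay) holds; for $l+j\ge1$ genuine decay of order $(1+t)^{-(l+j)/2}$ is obtained. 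The case $k\in\{0,1,2\}$ with extra spatial derivatives \emph{on} $\mathbf{g}$ is handled by integration by parts in $x$ inside the Bloch integral, moving the $\partial_x^k$ onto the (smooth, periodic, bounded) kernel and onto $e^{i\xi x}$ — this produces at most $k$ extra powers of $\xi$ but, crucially, one estimates $\|\partial_x^{k'}\check{\mathbf{g}}\|$ without those extra powers, so no additional decay is claimed and the bound is uniform; the point of allowing $k$ up to $2$ is exactly that such terms arise from the $\mathbf{g}$ carrying spatial derivatives of the nonlinearity and one only needs boundedness, not decay. Care is needed that the cutoff $\chi(t)$ and $\chi_{\xi_0}(\xi)$ do not destroy these bounds, but since both are smooth and $\chi(t)=0$ near $t=0$ they only improve matters.

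For the $\tilde{S}_2$ estimates, the high-frequency part $|\xi|>\xi_0$ contributes, by \ref{assD1}--\ref{assD2} and compactness of $[-\pi/T,\pi/T)\setminus(-\xi_0,\xi_0)$, a uniform spectral gap $\Re\,\sigma(\El(\xi))\le-\eta<0$, hence an exponentially decaying semigroup there, bounded on $L^2(\R)$ by the isometry property (and trivially dominated by any algebraic rate); analytic semigroup/sectoriality estimates for $\El(\xi)$ give the same with a fixed power of the resolvent if derivatives are needed. The low-frequency part of $\tilde{S}_2$ is $e^{\El_0 t}\mathbf{g}$ minus the critical eigenprojection, i.e.\ the semigroup restricted to $\mathrm{Ran}(I-\Pi(\xi))$ for $|\xi|\le\xi_0$, on which $\Re\,\sigma(\El(\xi))\le-\theta\xi^2$ by \ref{assD2} but the spectrum is bounded \emph{away} from $0$ uniformly (the single near-zero eigenvalue having been removed), giving a uniform exponential rate there as well; alternatively one keeps the $e^{-\theta\xi^2 t}$ bound and gets $\int_{|\xi|\le\xi_0}e^{-2\theta\xi^2 t}\,d\xi\lesssim(1+t)^{-1/2}$ in the $L^2\to L^2$ estimate and, combined with the $L^\infty_\xi$ bound, $\big(\int e^{-2\theta\xi^2 t}d\xi\big)\|\check{\mathbf g}\|_{L^\infty_\xi L^2_x}\lesssim (1+t)^{-1/2}\|\mathbf g\|_{L^1}$ — to reach the sharper $(1+t)^{-3/4}$ one uses that on the complement of the projection the eigenfunction expansion contributes an extra $\xi$-factor (mean-zero cancellation à la \cite{haragus}), i.e.\ the integrand carries $|\xi|$, so $\int_{|\xi|\le\xi_0}|\xi|\,e^{-2\theta\xi^2t}\,d\xi\lesssim(1+t)^{-1}$ and interpolation/combination with the high-frequency exponential part yields $(1+t)^{-3/4}$ against the $L^1\cap L^2$ norm. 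I anticipate the main obstacle to be precisely this bookkeeping for the sharp $(1+t)^{-3/4}$ rate in $\tilde{S}_2$: one must correctly identify the leading-order cancellation that upgrades $-1/2$ to $-3/4$, splitting $\tilde S_2$ into the genuinely exponentially damped piece and the "residual low-frequency" piece that behaves like one spatial derivative of $s_p$, and then quote \cite[Lemma 2.1]{haragus} for the remaining identical details rather than redo them.
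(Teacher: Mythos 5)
Your route is the same one the paper takes: the paper's own proof is essentially a citation — the two $s_p$ bounds and the $L^1\cap L^2\to L^2$ bound for $\tilde S_2$ are quoted from \cite[Lemmas 3.1--3.2]{haragus}, and only the last $L^2\to L^2$ bound for $\tilde S_2$ is obtained by adapting the proof of the $\tilde S_c$ estimate there — and your sketch reconstructs exactly that Bloch-wave analysis, correctly identifying the decisive cancellation $\Phi(\xi,\cdot)-\phi'=O(|\xi|)$ which upgrades the $\tilde S_2$ rates, and the correct handling of $\partial_x^k$ by moving derivatives onto the smooth kernel.

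However, two concrete points in your bookkeeping are wrong as written, even though the final rates you need remain true. First, the expansion $\lambda(\xi)=-a\xi^2+O(\xi^3)$ is unjustified: for Lugiato--Lefever waves the critical eigenvalue generically has a nonzero linear term, $\lambda(\xi)=ia\xi-d\xi^2+O(|\xi|^3)$ with $a\in\R$, and \ref{assD2} only controls $\Re\lambda(\xi)\le-\theta\xi^2$. Hence $\partial_t^j$ produces $|\lambda(\xi)|^j\lesssim|\xi|^{j}$, not $|\xi|^{2j}$; this is precisely why the stated rates carry $(1+t)^{-\frac{l+j}{2}}$ rather than your ``stronger'' exponents, which are false in general. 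Second, you bound $L^2(\R)$-norms of Bloch integrals by integrating the $\xi$-slices in $\xi$ (or by Cauchy--Schwarz in $\xi$); that pairing controls $L^\infty_x$, not $L^2_x$, since $e^{i\xi x}\Phi(\xi,x)$ is not square-integrable on $\R$. The correct tool is the Bloch--Parseval identity: for $L^2\to L^2$ one takes the supremum of the multiplier over $\xi$ (giving, e.g., $\sup_\xi|\xi|^{l+j}e^{-\theta\xi^2t}\sim t^{-\frac{l+j}{2}}$, and for $\tilde S_2$ with the extra $|\xi|$ factor $\sup_\xi|\xi|e^{-\theta\xi^2t}\sim t^{-\frac12}$), while for $L^1\to L^2$ one combines Parseval with $\sup_\xi\|\check{\mathbf g}(\xi,\cdot)\|_{L^2_{\mathrm{per}}}\lesssim\|\mathbf g\|_{L^1}$, yielding $\bigl(\int|\xi|^{2(l+j)}e^{-2\theta\xi^2t}\,d\xi\bigr)^{1/2}\sim t^{-\frac14-\frac{l+j}{2}}$ and, with the extra $|\xi|$, the $-\frac34$ rate directly (no interpolation needed). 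In particular your intermediate claim that $s_p(t)$ decays like $(1+t)^{-1/4}$ from $L^2$ to $L^2$ is false — the critical mode admits no decay on $L^2$, and none is claimed in the proposition. These are fixable and standard corrections, and with them your argument reduces to the computation in \cite{haragus} that the paper cites.
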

\begin{proof}
    The first two estimates are precisely shown in \cite[Lemma 3.1]{haragus}. The third one is a consequence of \cite[Lemma 3.1 \& Lemma 3.2]{haragus}. Adapting the proof of the estimate $\tilde{S}_c(t)$ in \cite[Lemma 3.2]{haragus}, one immediately finds the last estimate.
\end{proof}

\begin{proposition}[$L^\infty$-estimates]
\label{prop_linear_l_infty}
    Assume \ref{assH1} and \ref{assD1}-\ref{assD3}. There exist constants 
    $C,\delta_1>0$ such that
    \begin{align*}
        ||\tilde{S}_2(t)\mathbf{g}||_{L^\infty} &\leq C \left(e^{-\delta_1 t}||\mathbf{g}||_{H^1} + (1+t)^{-1}||\mathbf{g}||_{L^1 \cap L^2}\right), \quad \mathbf{g} \in H^1(\R) \cap L^1(\R), \\
        ||\tilde{S}_2(t)\mathbf{g}||_{L^\infty} &\leq C  (1+t)^{-\frac{3}{4}}||\mathbf{g}||_{H^1}, \quad \mathbf{g} \in H^1(\R),
                \end{align*}
                and
        \begin{align*}
        ||\partial_x s_p(t)\mathbf{g}||_{L^\infty} &\leq C (1+t)^{-\frac{3}{4}}||\mathbf{g}||_{L^2}, \quad \mathbf{g} \in L^2(\R), \\
        ||\partial_x s_p(t)\mathbf{g}||_{L^\infty} &\leq C (1+t)^{-1}||\mathbf{g}||_{L^1}, \quad \mathbf{g} \in L^1(\R),
    \end{align*}
    for all $t\geq 0$.
\end{proposition}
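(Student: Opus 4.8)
The plan is to upgrade the $L^2$-framework of \cite[Section 3]{haragus} to pointwise-in-$x$ estimates by working directly with the Bloch representation of the semigroup. As in \cite{haragus}, one decomposes $e^{\El_0 t} = \tilde S_e(t) + \tilde S_p(t)$, where $\tilde S_p(t)$ collects the contributions of Bloch frequencies $\xi$ in a fixed small neighbourhood $(-\xi_0,\xi_0)$ of the origin lying on the critical spectral curve $\lambda(\xi)$ bifurcating from the simple eigenvalue $0$ of $\El(0)$, while $\tilde S_e(t)$ collects the remaining, spectrally stable contributions. By analytic perturbation theory and \ref{assD1}--\ref{assD3}, on $(-\xi_0,\xi_0)$ the operator $\El(\xi)$ has a simple eigenvalue $\lambda(\xi)$ with $\Re\,\lambda(\xi)\leq-\theta\xi^2$ (by \ref{assD2}), eigenprojection $\Pi(\xi)h = \Phi_\xi\langle\widetilde{\Phi}_\xi,h\rangle_{L^2(0,T)}$ and eigenfunction $\Phi_\xi\in H^2_{\mathrm{per}}(0,T)$ depending analytically on $\xi$, with $\Phi_0=\phi'$; writing $\Phi_\xi = \phi' + \xi\,\Psi_\xi$, the family $(\Psi_\xi)$ is analytic, hence uniformly bounded, in $H^2_{\mathrm{per}}(0,T)\hookrightarrow L^\infty(0,T)$. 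Fixing a smooth cut-off $\rho$ equal to $1$ near $0$ and supported in $(-\xi_0,\xi_0)$, and writing $\check{\mathbf{g}}$ for the Bloch transform of $\mathbf{g}$, one has for $t\geq 2$ (the cases $0\leq t\leq 2$ being trivial, since $s_p\equiv 0$ on $[0,1]$ and all propagators map $H^1(\R)$ into $L^\infty(\R)$ uniformly on compact time intervals)
\begin{align*}
[s_p(t)\mathbf{g}](x) &= \int_{-\xi_0}^{\xi_0} e^{i\xi x}\,\rho(\xi)\,e^{\lambda(\xi)t}\,\langle\widetilde{\Phi}_\xi,\check{\mathbf{g}}(\xi,\cdot)\rangle_{L^2(0,T)}\,d\xi, \\
[\tilde S_2(t)\mathbf{g}](x) &= [\tilde S_e(t)\mathbf{g}](x) + \int_{-\xi_0}^{\xi_0} e^{i\xi x}\,\rho(\xi)\,e^{\lambda(\xi)t}\,\xi\,\Psi_\xi(x)\,\langle\widetilde{\Phi}_\xi,\check{\mathbf{g}}(\xi,\cdot)\rangle_{L^2(0,T)}\,d\xi + \tilde S_e^{\flat}(t)\mathbf{g},
\end{align*}
where $\tilde S_e^{\flat}(t)$ is the non-critical low-frequency remainder, exponentially decaying like $\tilde S_e(t)$. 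The structural point I would exploit is that $\tilde S_2(t)$ is precisely $e^{\El_0 t}$ with its slowest, fibre-constant mode $\phi'\,s_p(t)$ removed, so that its low-frequency part carries the extra factor $\xi$.

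With these representations in hand I would prove the $\partial_x s_p$-bounds and the low-frequency part of the $\tilde S_2$-bounds by one and the same elementary computation. Differentiating the first display in $x$ produces the factor $i\xi$ (the only $x$-dependence being $e^{i\xi x}$); bounding $|e^{i\xi x}|\leq 1$ and pulling absolute values into the fibre integral reduces matters to the scalar Gaussian bounds $\int_{-\xi_0}^{\xi_0}|\xi|^r e^{-\theta\xi^2 t}\,d\xi \lesssim (1+t)^{-(r+1)/2}$ together with a bound on the fibre projection $\langle\widetilde{\Phi}_\xi,\check{\mathbf{g}}(\xi,\cdot)\rangle$. For $L^1$-data I would use $\sup_\xi\|\check{\mathbf{g}}(\xi,\cdot)\|_{L^1(0,T)}\lesssim\|\mathbf{g}\|_{L^1(\R)}$ and the uniform bound $\sup_\xi\|\widetilde{\Phi}_\xi\|_{L^\infty(0,T)}<\infty$ to get $|\langle\widetilde{\Phi}_\xi,\check{\mathbf{g}}(\xi,\cdot)\rangle|\lesssim\|\mathbf{g}\|_{L^1(\R)}$, which, pulled out of the $\xi$-integral, leaves $\int_{-\xi_0}^{\xi_0}|\xi|e^{-\theta\xi^2 t}\,d\xi \lesssim (1+t)^{-1}$; for $L^2$-data I would instead keep $|\langle\widetilde{\Phi}_\xi,\check{\mathbf{g}}(\xi,\cdot)\rangle|\lesssim\|\check{\mathbf{g}}(\xi,\cdot)\|_{L^2(0,T)}$ and apply Cauchy--Schwarz in $\xi$ against $\big(\int_{-\xi_0}^{\xi_0}\xi^2 e^{-2\theta\xi^2 t}\,d\xi\big)^{1/2}\lesssim (1+t)^{-3/4}$ and the Bloch--Parseval bound $\big(\int_{-\xi_0}^{\xi_0}\|\check{\mathbf{g}}(\xi,\cdot)\|_{L^2(0,T)}^2\,d\xi\big)^{1/2}\lesssim\|\mathbf{g}\|_{L^2(\R)}$. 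Applied to the $\xi\,\Psi_\xi(x)$-term of the second display (uniformly bounded in $x$), the same two computations give the $L^1$- and $L^2$-based $L^\infty$-bounds for the low-frequency part of $\tilde S_2(t)$ with rates $(1+t)^{-1}$ and $(1+t)^{-3/4}$, respectively.

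For the exponentially decaying pieces $\tilde S_e(t)$ and $\tilde S_e^{\flat}(t)$ I would estimate their $L^\infty$-norm by their $H^1$-norm via Sobolev embedding and invoke the uniform-in-$\xi$ exponential decay of the stable fibre semigroups on $H^1_{\mathrm{per}}(0,T)$ --- which follows from \ref{assD1}--\ref{assD2} and the high-frequency resolvent estimates exactly as in \cite{haragus} --- to conclude $\|\tilde S_e(t)\mathbf{g}\|_{L^\infty},\,\|\tilde S_e^{\flat}(t)\mathbf{g}\|_{L^\infty}\lesssim e^{-\delta_1 t}\|\mathbf{g}\|_{H^1}$. Adding this to the low-frequency estimates, and using $\|\mathbf{g}\|_{L^1(\R)}\leq\|\mathbf{g}\|_{L^1\cap L^2}$ and $e^{-\delta_1 t}\lesssim (1+t)^{-3/4}$, yields the two claimed bounds for $\tilde S_2(t)$; the $\partial_x s_p$-bounds follow directly from the computation of the previous paragraph, since $s_p(t)$ has no non-critical part. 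The main obstacle I anticipate is conceptual rather than computational: the decomposition must be arranged so that $\tilde S_2(t)$ genuinely carries the compensating factor $\xi$, which is exactly what promotes the generic rate $(1+t)^{-1/2}\|\mathbf{g}\|_{L^1}$ for $\|e^{\El_0 t}\mathbf{g}\|_{L^\infty}$ to $(1+t)^{-1}$ --- a gain that is indispensable for closing the nonlinear iteration in \S\ref{section_l_infty_control}. A secondary technical point worth flagging is that, since the principal part $-\beta i\partial_x^2$ is dispersive rather than smoothing, there is no $L^1$--$L^\infty$ smoothing available for the exponentially decaying pieces, so their $L^\infty$-control genuinely requires $H^1$-data; by contrast, absolute convergence of all fibre integrals is immediate, the $\xi$-domain being compact and $e^{\Re\,\lambda(\xi)t}$ bounded.
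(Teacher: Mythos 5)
Your argument is correct and is in substance the same as the paper's: the paper proves this proposition simply by citing \cite[Lemmas 3.1--3.2]{haragus} together with \cite[Corollary 3.4]{JONZ} and \cite[Proposition 3.1]{JNRZ}, and what you have written out is essentially the Bloch-representation proof contained in those references --- the compensating factor $\xi$ carried by $\tilde S_2(t)$ once the fibre-constant mode $\phi'\,s_p(t)$ is removed, the $L^1$-based fibre bound versus the Cauchy--Schwarz/Bloch--Parseval bound for $L^2$-data giving the rates $(1+t)^{-1}$ and $(1+t)^{-3/4}$, and Sobolev embedding plus uniform exponential decay for the non-critical pieces. The one ingredient you outsource, the uniform-in-$\xi$ exponential $H^1_{\mathrm{per}}$-decay of the stable fibre semigroups (which for this non-sectorial, dispersive-plus-damping operator rests on resolvent bounds and a Gearhart--Pr\"uss argument rather than analytic smoothing), is precisely what the cited results supply, so no essential step is missing.
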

\begin{proof}
    These last two estimates are shown in  \cite[Lemma 3.2]{haragus}. The first two estimates are consequences of \cite[Lemma 3.1]{haragus} together with \cite[Corollary 3.4]{JONZ} and \cite[Proposition 3.1]{JNRZ}.
\end{proof}
\section{Nonlinear iteration scheme}
\subsection{Local existence of the solutions}
\label{section_well_posed}
Using standard semigroup theory, see e.g. \cite{cazenave} or \cite{pazy}, we establish

\begin{proposition}
\label{prop_existence_w}
    Let $\w_0 \in H^6_{\textrm{per}}(0,T)$. There exist a maximal time $T_{\textrm{max}} \in (0,\infty]$ and a unique solution $\w \in C([0,T_{\textrm{max}}); H^6_{\textrm{per}}(0,T))\cap  C^1([0,T_{\textrm{max}}); H^4_{\textrm{per}}(0,T))$ of (\ref{LLE_period}) with $\w(0) = \phi + \w_0$. If $T_{\textrm{max}}<\infty$, then 
    \begin{align}
    \label{blowup_crit_w}
        \limsup_{t\uparrow T_{\textrm{max}}}||\w(t)||_{H^4_{\textrm{per}}(0,T)} = \infty.
    \end{align}
\end{proposition}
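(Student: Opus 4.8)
The plan is to recast (\ref{LLE_period}) as an abstract semilinear evolution equation $\w_t = A\w + G(\w)$ on the Hilbert space $X = H^6_{\textrm{per}}(0,T)$, where $A$ is the linear operator $\mathcal{J}(-\beta\partial_x^2 - \alpha) - \mathcal{I}$ acting on $X$ with domain $D(A) = H^8_{\textrm{per}}(0,T)$, and $G$ collects the constant forcing $(F,0)^T$ together with the Nemytskii operator $\mathbf{h}\mapsto\mathcal{N}(\mathbf{h})$. First I would check that $A$ generates a $C_0$-semigroup on $X$: this follows by Fourier series, since the symbol of $A$ on the $k$-th mode is $-1 + \mathcal{J}(\beta(2\pi k/T)^2 - \alpha)$, whose real part equals $-1$ uniformly in $k$, so $e^{At}$ is in fact a bounded analytic (indeed unitary-up-to-$e^{-t}$) semigroup on every $H^s_{\textrm{per}}(0,T)$. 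Second, since $H^6_{\textrm{per}}(0,T)$ is a Banach algebra (as $6 > 1/2$, via the embedding $H^6_{\textrm{per}}(0,T)\hookrightarrow C^5_{\textrm{per}}$ and the algebra property), the cubic map $\mathcal{N}$ is locally Lipschitz on bounded subsets of $X$; hence the standard contraction-mapping argument on the Duhamel integral $\w(t) = e^{At}(\phi+\w_0) + \int_0^t e^{A(t-s)}G(\w(s))\,ds$ in $C([0,\tau];X)$ yields, for $\tau$ small depending on $\|\phi+\w_0\|_X$, a unique local mild solution; this is exactly the setting of \cite{cazenave,pazy}.

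Next I would upgrade regularity and continuation. Because $e^{At}$ is analytic and $G$ maps $X$ into $X$ smoothly, the mild solution is automatically a classical solution with $\w \in C([0,\tau];H^6_{\textrm{per}})\cap C^1([0,\tau];H^4_{\textrm{per}})$ — the loss of two derivatives in the time-derivative statement comes from the second-order operator $A$: $\w_t = A\w + G(\w)$ with $\w\in H^6$ forces $\w_t\in H^4$, and continuity in time follows from continuity of $\w$ in $H^6$ and of $G$. Then one extends the solution to a maximal interval $[0,T_{\textrm{max}})$ by the usual reiteration: either $T_{\textrm{max}} = \infty$, or the $X$-norm blows up. The only subtle point, and the one I expect to be the main obstacle, is sharpening the blow-up alternative from "$\|\w(t)\|_{H^6_{\textrm{per}}}\to\infty$" to the stronger "$\|\w(t)\|_{H^4_{\textrm{per}}}\to\infty$" as in (\ref{blowup_crit_w}).

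To get the $H^4$-blow-up criterion one needs an \emph{a priori parabolic-type smoothing estimate}: bound $\|\w(t)\|_{H^6_{\textrm{per}}}$ on $[t_0, T_{\textrm{max}})$ in terms of $\sup_{[t_0,T_{\textrm{max}})}\|\w(t)\|_{H^4_{\textrm{per}}}$. The mechanism is that the equation is \emph{Schr\"odinger-type}, not genuinely parabolic, so $e^{At}$ does not gain derivatives; instead one exploits that $\mathcal{J}(-\beta\partial_x^2)$ is skew-adjoint, so the linear $H^6$-energy $\|\w\|_{H^6}^2$ is, up to the $-1$ damping, conserved, and the only source term in the differential inequality $\tfrac{d}{dt}\|\w\|_{H^6}^2 \le -2\|\w\|_{H^6}^2 + C\langle \partial_x^6 \w, \partial_x^6 \mathcal{N}(\w)\rangle + (\text{lower order})$ must be controlled. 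Expanding $\partial_x^6\mathcal{N}(\w)$ by the Leibniz rule, every term with all six derivatives landing on one factor is killed by a skew-symmetry/integration-by-parts trick (the top-order term $\langle\partial_x^6\w, \mathcal{J}\cdot(\text{quadratic in }\w)\,\partial_x^6\w\rangle$ is handled by moving one derivative and absorbing into a Gr\"onwall constant depending only on $\|\w\|_{W^{1,\infty}}\lesssim\|\w\|_{H^4}$), while all remaining terms involve at most five derivatives on a single factor and are bounded by $C(\|\w\|_{H^4})\|\w\|_{H^6}^2$ using the algebra property and interpolation. Gr\"onwall's inequality then gives $\|\w(t)\|_{H^6}\le \|\w(t_0)\|_{H^6}\exp\!\big(C(\sup_{[t_0,t]}\|\w\|_{H^4})\,(t-t_0)\big)$, so $H^6$ cannot blow up while $H^4$ stays bounded, and (\ref{blowup_crit_w}) follows by contraposition. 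Uniqueness on $[0,T_{\textrm{max}})$ is inherited from the local uniqueness via a standard connectedness argument.
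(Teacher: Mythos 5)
Your conclusions are reachable, but your route differs from the paper's, and one of your justifications is genuinely wrong. The paper handles this proposition exactly as it handles Proposition \ref{prop_existence_unmodulated_v}: take as base space $X=H^4_{\textrm{per}}(0,T)$, on which the constant-coefficient operator $\mathcal{J}(-\beta\partial_x^2-\alpha)-\mathcal{I}$ generates a $C_0$-semigroup (in fact a group) with domain $H^6_{\textrm{per}}(0,T)$, and note that the derivative-free cubic map $\mathcal{N}$ is locally Lipschitz and continuously differentiable on $H^4_{\textrm{per}}(0,T)$ by the Banach-algebra property. Since the initial datum $\phi+\w_0$ lies in the domain $H^6_{\textrm{per}}(0,T)=D(A)$, \cite[Theorems 1.4 and 1.5 of Section 6.1]{pazy} directly give a unique classical solution $\w\in C([0,T_{\textrm{max}});H^6_{\textrm{per}}(0,T))\cap C^1([0,T_{\textrm{max}});H^4_{\textrm{per}}(0,T))$ together with the blow-up alternative in the $X$-norm, which is precisely (\ref{blowup_crit_w}); no a priori estimate is needed. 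You instead contract in $H^6_{\textrm{per}}(0,T)$, obtain an $H^6$-blow-up alternative, and then downgrade it to $H^4$ via a tame energy estimate. That route also works, and is in fact easier than you describe: since $\mathcal{N}$ contains no derivatives, the top-order term is of the form $\langle\partial_x^6\w, Q(\w)\,\partial_x^6\w\rangle$ with $Q(\w)$ of order zero, so it is bounded by $\|\w\|_{L^\infty}^2\|\w\|_{H^6}^2$ with no integration by parts or skew-symmetry trick; the only care needed is to justify differentiating $\|\w(t)\|_{H^6}^2$ along a mild solution by an approximation argument. But it is substantially more machinery than the single correct choice of base space requires.

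The genuine flaw is your regularity step. The semigroup $e^{At}$ is \emph{not} analytic: the symbol's imaginary part grows like $k^2$, so $e^{At}$ is a $C_0$-group, unitary up to the factor $e^{-t}$, and gains no derivatives -- as you yourself observe two sentences later when you call the equation Schr\"odinger-type. Hence the claim that the mild solution is ``automatically classical because $e^{At}$ is analytic'' is not a valid argument, and with your choice $X=H^6_{\textrm{per}}(0,T)$, $D(A)=H^8_{\textrm{per}}(0,T)$ the datum $\phi+\w_0$ need not lie in $D(A)$, so the classical-solution theorems for semilinear equations do not apply in that setting either. The repair is exactly the paper's setup: regard your mild $H^6$-solution as a mild solution on $H^4_{\textrm{per}}(0,T)$ with initial datum in the domain $H^6_{\textrm{per}}(0,T)$; then the $C^1$-regularity into $H^4_{\textrm{per}}(0,T)$, the validity of $\w_t=A\w+G(\w)$ pointwise in $H^4_{\textrm{per}}(0,T)$, and continuity in the graph norm (equivalently in $H^6_{\textrm{per}}(0,T)$) all follow from \cite{pazy}.
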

Having the solution $\w$ at hand, the local existence of $\v$ follows.
\begin{proposition}
\label{prop_existence_unmodulated_v}
Let $\w$ and $T_{\textrm{max}}$ be as in Proposition \ref{prop_existence_w}.
    Let $\v_0 \in H^3(\R)$. There exist a maximal time $\tau_{\textrm{max}}\leq T_{\textrm{max}}$ and a unique solution $\v \in C([0,\tau_{\textrm{max}}); H^3(\R))\cap  C^1([0,\tau_{\textrm{max}}); H^1(\R))$ of (\ref{LLE_l2}) with $\v(0) = \v_0$. If $\tau_{\textrm{max}}<T_{\textrm{max}}$, then 
    \begin{align}
    \label{blowup_crit_v}
        \limsup_{t\uparrow \tau_{\textrm{max}}}||\v(t)||_{H^1} = \infty.
    \end{align}
\end{proposition}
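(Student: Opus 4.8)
The plan is to set up a standard fixed-point/continuation argument for the mild formulation of \eqref{LLE_l2}, treating the solution $\w$ of \eqref{LLE_period} from Proposition \ref{prop_existence_w} as a known, time-dependent coefficient. First I would write the Duhamel formula
\begin{align*}
\v(t) = e^{\mathcal{A}t}\v_0 + \int_0^t e^{\mathcal{A}(t-s)}\big(\mathcal{N}(\v(s)+\w(s)) - \mathcal{N}(\w(s))\big)\,ds,
\end{align*}
where $\mathcal{A}$ is the linear part $\mathbf{v}\mapsto \mathcal{J}(-\beta\mathbf{v}_{xx}+\alpha\mathbf{v}) - \mathbf{v}$ (note the sign on $\alpha$ as written in \eqref{LLE_l2}), which generates a $C_0$-semigroup on $H^3(\R)$ by standard semigroup theory (\cite{cazenave, pazy}); indeed the principal part $-\beta i\partial_x^2$ generates a unitary group on every $H^s$, the bounded perturbations $i\alpha$ and $-1$ only shift it, and the damping $-\mathbf{v}$ in fact makes it a contraction after multiplication by $e^{t}$. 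The cubic nonlinearity $\mathcal{N}$ is smooth with $\mathcal{N}(0)=0$, so $\mathbf{v}\mapsto \mathcal{N}(\v+\w)-\mathcal{N}(\w)$ is locally Lipschitz from $H^3(\R)$ to $H^3(\R)$ on bounded sets (using that $H^3(\R)$ is a Banach algebra and that $\w(t)$ ranges in a bounded subset of $H^6_{\mathrm{per}}\hookrightarrow L^\infty$ on compact time intervals by Proposition \ref{prop_existence_w}), with Lipschitz constant depending continuously on $t$ through $\|\w(t)\|_{H^3_{\mathrm{per}}}$.

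Next I would run the contraction mapping principle on $C([0,\tau_0]; H^3(\R))$ for $\tau_0$ small enough (depending on $\|\v_0\|_{H^3}$ and on $\sup_{[0,\tau_0]}\|\w(t)\|_{H^3_{\mathrm{per}}}$, which is finite and can be taken uniform on any $[0,T']$ with $T'<T_{\mathrm{max}}$), obtaining a unique local mild solution. The regularity $\v \in C([0,\tau_{\mathrm{max}});H^3)\cap C^1([0,\tau_{\mathrm{max}});H^1)$ then follows by the usual bootstrap: the Duhamel integrand lies in $C([0,\tau_0];H^3)$, hence $\v$ solves the equation in the $H^1$-sense and $\v_t = \mathcal{A}\v + (\text{nonlinearity}) \in C([0,\tau_0];H^1)$. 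Standard uniqueness and the semigroup gluing argument let me extend to a maximal interval of existence $[0,\tau_{\mathrm{max}})$ with $\tau_{\mathrm{max}}\le T_{\mathrm{max}}$, the inequality being forced because \eqref{LLE_l2} only makes sense as long as $\w$ exists.

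For the blow-up alternative, the standard dichotomy gives: either $\tau_{\mathrm{max}}=T_{\mathrm{max}}$, or $\tau_{\mathrm{max}}<T_{\mathrm{max}}$ and $\limsup_{t\uparrow\tau_{\mathrm{max}}}\|\v(t)\|_{H^3}=\infty$. To sharpen the norm from $H^3$ down to $H^1$ as stated I would invoke the nonlinear damping estimate (of the type established in \cite{zumbrun_remark}, also alluded to in the discussion after Theorem \ref{main_result}): an energy estimate on the top-order derivatives showing $\frac{d}{dt}\|\v(t)\|_{H^3}^2 \le -c\|\v(t)\|_{H^3}^2 + C(\|\v(t)\|_{H^1})\|\v(t)\|_{H^1}^2$ for constants depending on $\sup\|\w\|_{H^6_{\mathrm{per}}}$, which yields an a priori bound on $\|\v(t)\|_{H^3}$ in terms of $\sup_{[0,t]}\|\v(s)\|_{H^1}$, so that control of the $H^1$-norm up to $\tau_{\mathrm{max}}$ precludes blow-up. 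I expect this damping estimate to be the main obstacle — one needs to integrate by parts carefully so that the top-order term $\|\partial_x^3\v\|^2$ from the nonlinearity is absorbed using the dispersive smoothing is \emph{not} available here (the group is unitary, not smoothing), so the gain must come purely from the $-\mathbf{v}$ damping together with Gagliardo–Nirenberg interpolation of the commutator terms involving $\w_{xx}$ — whereas the $C_0$-semigroup generation, local contraction, and uniqueness are entirely routine.
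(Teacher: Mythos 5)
Your local-existence and bootstrap parts are fine, but the overall strategy diverges from the paper in a way that creates a genuine gap at the end. The paper takes the base Banach space to be $X=H^1(\R)$ (not $H^3(\R)$) and views $H^3(\R)$ as the domain of the generator $\mathcal{A}$. Since the nonlinearity $\v\mapsto\mathcal{N}(\v+\w)-\mathcal{N}(\w)$ is a cubic polynomial map on the Banach algebra $H^1(\R)$ (and $\w$ is smooth), it is continuously differentiable $H^1\to H^1$; hence, with $\v_0\in H^3=D(\mathcal{A})$, Pazy's classical-solution theorem immediately yields $\v\in C([0,\tau_{\max});H^3)\cap C^1([0,\tau_{\max});H^1)$ \emph{and} the blow-up alternative in terms of the base-space norm $\|\v(t)\|_{H^1}$. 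No damping estimate is needed; the $H^1$ blow-up criterion is free.

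By setting up the contraction in $H^3(\R)$ instead, you end up with the blow-up alternative in the $H^3$ norm and then have to argue that $H^1$-boundedness implies $H^3$-boundedness. Your proposed damping inequality $\frac{d}{dt}\|\v\|_{H^3}^2\le -c\|\v\|_{H^3}^2+C(\|\v\|_{H^1})\|\v\|_{H^1}^2$ is not available here without smallness. The principal linear part of \eqref{LLE_l2} is dispersive (skew), not parabolic, so the only damping at top order is the $-c\|\partial_x^3\v\|_{L^2}^2$ coming from $-\mathcal{I}$. Differentiating the cubic nonlinearity three times and pairing with $\partial_x^3\v$ produces a term of the form $\langle \mathcal{N}'(\v+\w)\partial_x^3\v,\partial_x^3\v\rangle$, which contributes a contribution of size $\|\v+\w\|_{L^\infty}^2\|\partial_x^3\v\|_{L^2}^2$; this cannot be absorbed into $-c\|\partial_x^3\v\|_{L^2}^2$ unless $\|\v+\w\|_{L^\infty}$ is small, which is not available during local existence with general data. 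The Zumbrun-type corrected energy used in Proposition \ref{proposition_damping} circumvents this only in the small-perturbation regime (the correction is built from a bounded reference profile, leaving a remainder proportional to $\|\u-\mathring{\phi}\|_{L^\infty}\|\partial_x^3\v\|_{L^2}^2$), and its hypotheses already presuppose an $H^3$-bound on the perturbation, so invoking it here is circular. In short: the damping route you identify as ``the main obstacle'' is indeed the obstacle, and it does not close as stated; the fix is to move the semigroup argument down to $H^1(\R)$ from the start, as the paper does.
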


\begin{proof}
Due to the embedding $H^1_{\textrm{per}}(0,T) \hookrightarrow L^\infty(\R)$, we have that 
\begin{align*}
    \v \mapsto \mathcal{N}( \v+ \w) - \mathcal{N}(\w)
\end{align*}
is locally Lipschitz-continuous as map from $H^1(\R)$ to $ H^1(\R)$. Moreover, 
\begin{align*}
     \mathcal{J} \left(\begin{pmatrix}-\beta & 0 \\
    0 & -\beta \end{pmatrix}\partial_x^2 - \begin{pmatrix}-\alpha & 0 \\
    0 & -\alpha \end{pmatrix} \right) - \mathcal{I} 
\end{align*}
generates a $C_0$-semigroup on $H^1(\R)$ with domain $H^3(\R)$. Therefore, Proposition \ref{prop_existence_unmodulated_v} follows from \cite[Theorem 1.4 of Section 6.1]{pazy}
\end{proof}
\subsection{Inverse-modulated perturbations}
\label{section_inverse_modulated}
We first modulate $\u(t)$, that is, we consider
\begin{align*}
    \u(x- \sigma(t),t) - \phi(x) = (\w(x- \sigma(t),t) - \phi(x)) + \v(x-\sigma(t),t) 
\end{align*}
for some $\sigma: [0,\infty) \rightarrow \R$ with $\sigma(0) = 0$ to be defined a-posteriori. Then, we set 
\begin{align}
\label{define_inverse_w}
    \hat{\w}(x,t) = \w(x- \sigma(t),t) - \phi(x).
\end{align}
Motivated by the fact that $\v(t)$ is a perturbation of $\w(t) = \tilde{\w}(t) + \phi$, we subsequently define
\begin{align}
\label{define_inverse_v}
    \hat{\v}(x,t) = \u(x- \sigma(t) - \gamma(x,t),t) - \hat{\w}(x,t) - \phi(x)
\end{align}
for some $\gamma:  \R \times [0,\infty) \rightarrow \R$ with $\gamma(\cdot,0) = 0$ to be defined a-posteriori.
We find the modulated perturbation equations for $\hat{\w}(t)$,
\begin{align}
\label{modulated_w}
\begin{split}
     &(\partial_t - \mathcal{L}_0)(\hat{\w} +  \phi'\sigma ) = \mathcal{R}_1(\hat{\w} ) - \sigma_t \hat{\w}_x \\
     &\hat{\w}(0) = \w_0 - \phi.
\end{split}
\end{align}
and the one for $\hat{\v}(t)$, 
\begin{align}
\label{modulated_v}
\begin{split}
     &(\partial_t-\mathcal{L}_0) (\hat{\v} + \phi'\gamma - \gamma_x \hat{\w} - \gamma_x \hat{\v}) = \mathcal{R}_3(\hat{\w},\hat{\v},\gamma) - \sigma_t \hat{\v}_x +  (1-\gamma_x) \mathcal{R}_{2,2}(\hat{\w},\hat{\v}) + \mathcal{T}(\hat{\w},\gamma)\\
     &\hat{\v}(0)=\v_0,
     \end{split}
\end{align}
where
\begin{align*}
   \mathcal{R}_3(\hat{\w},\hat{\v},\gamma) =  \mathcal{Q}(\hat{\w},\hat{\v},\gamma) + \partial_x\mathcal{S}(\hat{\v}, \gamma) + \partial_x^2\mathcal{P}(\hat{\v},\gamma)
\end{align*}
and
\begin{align*}
    &\mathcal{Q}(\hat{\w},\hat{\v},\gamma) = (1-\gamma_x) \mathcal{R}_{2,1}(\hat{\w},\hat{\v}), \\
     &\mathcal{S}(\hat{\v}, \gamma) = - \gamma_t\hat{\v} + \beta \mathcal{J}\left( \frac{\gamma_{xx}}{(1-\gamma_x)^2}\hat{\v} - \frac{\gamma_x^2}{1-\gamma_x}\phi' \right),
     \\
      &\mathcal{P}(\hat{\v}, \gamma) = -\beta \mathcal{J}\left(\gamma_x + \frac{\gamma_x}{1-\gamma_x}\right)\hat{\v}, \\
    &\mathcal{T}(\hat{\w},\gamma)= - \gamma_x \mathcal{R}_1(\hat{\w}) - \partial_x\left(\gamma_t \hat{\w} - \beta \mathcal{J}\left( \frac{\gamma_{xx}}{(1-\gamma_x)^2} \hat{\w}\right)\right) - \partial_x^2\left(\beta \mathcal{J}\left(\gamma_x + \frac{\gamma_x}{1-\gamma_x}\right) \hat{\w}\right).
\end{align*}
We delegate the derivation of (\ref{modulated_v}) to Appendix \ref{derivation_inv}.The main observation is that in the nonlinearities of (\ref{modulated_v})  any $\hat{\w}$- and $\sigma_t$-term is paired with a $\gamma_x$, $\hat{\v}$ or $\gamma_t$ contribution suggesting that we have sufficient control for an $L^2$-iteration scheme since we expect exponential decay for $||\hat{\w}(t)||_{L^\infty}$ and $|\sigma_t(t)|$ from \cite{LLE_periodic} while we control $\gamma_x(t)$, $\hat{\v}(t)$ and $\gamma_t(t)$ in $H^k(\R)$. To this end, we establish the following nonlinear bounds.

\begin{lemma}
\label{lemma_all_nonlinear_bounds}
    Fix a constant $c>0$ such that $||f||_{L^\infty} \leq \frac{1}{2}$ for all $f \in H^1(\R)$ with $||f||_{H^1} \leq c$. There exists a constant $C>0$ such that
\begin{align*}
   L^1\textrm{-bound: } ||\mathcal{R}_3(\hat{\w},\hat{\v},\gamma)||_{L^1} &\leq C \left(||\hat{\v}||_{L^2}^2 + ||(\gamma_x,\gamma_t)||_{H^2 \times H^1}(||\hat{\v}||_{H^2} + ||\gamma_x||_{L^2})\right), \\
    L^2\textrm{-bound: }||\mathcal{R}_3(\hat{\w},\hat{\v},\gamma)||_{L^2} &\leq C \left(||\hat{\v}||_{H^1}^2 + ||(\gamma_x,\gamma_t)||_{H^2 \times H^1}(||\hat{\v}||_{H^2} + ||\gamma_x||_{L^2})\right), \\
    H^1\textrm{-bound: }  ||\mathcal{R}_3(\hat{\w},\hat{\v},\gamma)||_{H^1} &\leq C \left(||\hat{\v}||_{H^1}^2 + ||(\gamma_x,\gamma_t)||_{H^3 \times H^2}(||\hat{\v}||_{H^3} + ||\gamma_x||_{H^1})\right),
    \end{align*}
and 
    \begin{align*}
   L^2\textrm{-bounds: } ||\sigma_t \hat{\v}_x||_{L^2} \leq C |\sigma_t||\hat{\v}||_{H^1}&, \quad 
    ||(1-\gamma_x) \mathcal{R}_{2,2}(\hat{\w}, \hat{\v})||_{L^2} \leq C ||\hat{\v}||_{L^2} ||\hat{\w}||_{H^1_{\textrm{per}}(0,T)}, \\
    ||\mathcal{T}(\hat{\w},\gamma)||_{L^2} &\leq C||(\gamma_x,\gamma_t)||_{H^2 \times H^1} ||\hat{\w}||_{H^3_{\textrm{per}}(0,T)},
     \\
   H^1\textrm{-bounds: } ||\sigma_t \hat{\v}_x||_{H^1} \leq C |\sigma_t||\hat{\v}||_{H^2}&, \quad
    ||(1-\gamma_x) \mathcal{R}_{2,2}(\hat{\w}, \hat{\v})||_{H^1} \leq C ||\hat{\v}||_{H^1} ||\hat{\w}||_{H^2_{\textrm{per}}(0,T)}, \\
    ||\mathcal{T}(\hat{\w},\gamma)||_{H^1} &\leq C||(\gamma_x,\gamma_t)||_{H^3 \times H^2} ||\hat{\w}||_{H^4_{\textrm{per}}(0,T)},
\end{align*}
hold for all $\hat{\v} \in H^3(\R)$, $\hat{\w} \in H^4_{\textrm{per}}(0,T)$, $(\gamma_t,\gamma_x) \in H^2(\R)\times H^3(\R)$ and $\sigma_t \in \R$ provided \begin{align*}
    ||\hat{\w}||_{H^4_{\textrm{per}}(0,T)}, ||\hat{\v}||_{H^1},||\gamma_x||_{H^3} \leq c.
\end{align*}
\end{lemma}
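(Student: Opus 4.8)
The plan is to prove each estimate by unpacking the explicit formulas for $\mathcal{R}_3, \mathcal{S}, \mathcal{P}, \mathcal{Q}, \mathcal{T}$ and the residuals $\mathcal{R}_{2,1}, \mathcal{R}_{2,2}$, and then applying three elementary ingredients: (i) the pointwise nonlinear bounds \eqref{formal_nonlinear bounds}, valid because the hypotheses $\|\hat{\w}\|_{H^4_{\textrm{per}}}, \|\hat{\v}\|_{H^1}, \|\gamma_x\|_{H^3} \le c$ together with the Sobolev embeddings $H^1(\R), H^1_{\textrm{per}}(0,T) \hookrightarrow L^\infty$ guarantee all arguments stay in a fixed ball of radius $K$; (ii) the fact that $\frac{1}{1-\gamma_x}$, $\frac{1}{(1-\gamma_x)^2}$ and their products with $\gamma_x, \gamma_{xx}$ are controlled in the relevant Sobolev norms since $\|\gamma_x\|_{L^\infty} \le \frac12$, so $1-\gamma_x$ is bounded away from zero and one may expand in a Neumann-type series or use the algebra/Moser estimates for $H^s$ with $s \ge 1$; (iii) Hölder's inequality in the forms $\|fg\|_{L^1} \le \|f\|_{L^2}\|g\|_{L^2}$, $\|fg\|_{L^2} \le \|f\|_{L^\infty}\|g\|_{L^2}$, and the algebra property $\|fg\|_{H^1} \lesssim \|f\|_{H^1}\|g\|_{H^1}$.

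Concretely, I would first record that $\mathcal{R}_{2,1}(\hat{\w},\hat{\v})$ is quadratic in $\hat{\v}$ and $\mathcal{R}_{2,2}(\hat{\w},\hat{\v})$ is bilinear in $\hat{\v}$ and $\hat{\w}$, giving $|\mathcal{R}_{2,1}| \le C|\hat{\v}|^2$ and $|\mathcal{R}_{2,2}| \le C|\hat{\v}||\hat{\w}|$ pointwise (here one also needs $H^1$-control of first derivatives of these residuals, which follows from $\mathcal{N}$ being a cubic polynomial, hence smooth, so the chain rule produces terms of the schematic form $(\text{smooth bounded})\cdot(\partial\hat{\v}\text{ or }\partial\hat{\w})\cdot(\hat{\v}\text{ or }\hat{\w})$). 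Then I would treat $\mathcal{Q}$, $\mathcal{S}$, $\mathcal{P}$ term by term: $\mathcal{Q} = (1-\gamma_x)\mathcal{R}_{2,1}$ contributes the $\|\hat{\v}\|^2$ terms; $\partial_x \mathcal{S}$ and $\partial_x^2\mathcal{P}$, after distributing derivatives, produce terms each carrying at least one factor among $\gamma_t, \gamma_x, \gamma_{xx}$ (and its derivatives) multiplied by $\hat{\v}$ or $\phi'$ (and their derivatives) — these are exactly the $\|(\gamma_x,\gamma_t)\|(\|\hat{\v}\| + \|\gamma_x\|)$ terms, with the $\|\gamma_x\|$ summand coming from the $\frac{\gamma_x^2}{1-\gamma_x}\phi'$ piece of $\mathcal{S}$. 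Counting derivatives: the $L^1$/$L^2$-bounds require at most two spatial derivatives landing on $\mathcal{P}$'s argument, hence $\hat{\v} \in H^2$ and $\gamma_x \in H^2$; the $H^1$-bound shifts everything up by one, hence $\hat{\v} \in H^3$, $\gamma_x \in H^3$, $\gamma_t \in H^2$, which matches the stated hypotheses. The remaining three pairs of bounds — on $\sigma_t \hat{\v}_x$, on $(1-\gamma_x)\mathcal{R}_{2,2}$, and on $\mathcal{T}$ — are immediate: the first from $|\sigma_t|$ being a scalar, the second from the pointwise bound on $\mathcal{R}_{2,2}$ plus $\|1-\gamma_x\|_{L^\infty} \le \frac32$, and the third from the explicit formula for $\mathcal{T}$ by distributing the one and two derivatives onto the $\hat{\w}$-factor (which costs one and two extra derivatives on $\hat{\w}$, explaining the $H^3_{\textrm{per}}$ and $H^4_{\textrm{per}}$ norms in the $L^2$- and $H^1$-bounds respectively).

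The main obstacle — really the only place demanding care rather than bookkeeping — is the systematic handling of the rational factors $\frac{1}{1-\gamma_x}$ and $\frac{\gamma_{xx}}{(1-\gamma_x)^2}$ when estimating $\partial_x\mathcal{S}$ and $\partial_x^2\mathcal{P}$ in $H^1$, since differentiating these generates denominators $(1-\gamma_x)^{-3}$ and numerators involving $\gamma_{xxx}$. The clean way is to prove once and for all a lemma: if $\|g\|_{L^\infty} \le \frac12$ and $g \in H^s$ for $s \in \{1,2,3\}$, then $h(g) \in H^s$ with $\|h(g)\|_{H^s} \le \Psi(\|g\|_{H^s})$ for any fixed rational function $h$ regular on $[-\frac12,\frac12]$, where $\Psi$ is increasing; this follows from the $H^s$-algebra property ($s \ge 1$) and a Neumann series for $(1-g)^{-1}$. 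Granting this, every offending factor is absorbed into the implicit constant $C$ (which is allowed to depend on $c$), and the degree counting in $\hat{\v}, \gamma_x, \gamma_t$ proceeds mechanically. One should also double-check that no term is purely linear in $(\gamma_x, \gamma_t)$ with no compensating small factor — but inspection of $\mathcal{S}$ and $\mathcal{P}$ shows every term is at least quadratic in the collection $\{\hat{\v}, \gamma_x, \gamma_t\}$ (the $\frac{\gamma_x^2}{1-\gamma_x}\phi'$ term is quadratic in $\gamma_x$), so the bilinear structure $\|(\gamma_x,\gamma_t)\|\cdot(\|\hat{\v}\| + \|\gamma_x\|)$ is genuinely attained, which is exactly what the subsequent $L^2$-iteration needs.
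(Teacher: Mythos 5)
The paper states Lemma \ref{lemma_all_nonlinear_bounds} without proof, so there is no in-text argument to compare against; your proposal supplies a proof that is essentially correct and follows the standard term-by-term decomposition one would expect. Your three ingredients — the pointwise bounds (\ref{formal_nonlinear bounds}), a Moser-type composition estimate for rational functions of $\gamma_x$ uniformly on $\{\|\gamma_x\|_{L^\infty}\le\tfrac12\}$, and Hölder plus the $H^1$-algebra property — are the right tools, and your derivative count (two spatial derivatives from $\partial_x^2\mathcal{P}$ forcing $\hat{\v}\in H^2$, $\gamma_x\in H^2$ in the $L^2$-bound, each raised by one for the $H^1$-bound) matches the exponents in the statement.

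One point deserves sharper emphasis, because it is the only place where a naive reading of your argument could go wrong: in every estimate on $\mathcal{T}(\hat{\w},\gamma)$, the $\hat{\w}$-factor is $T$-periodic and therefore lies in $L^2_{\mathrm{per}}(0,T)$ but \emph{not} in $L^2(\R)$. Consequently, in each product $(\text{rational function of }\gamma)\cdot(\text{derivative of }\hat{\w})$, the $L^2(\R)$-integrability must be carried exclusively by the $\gamma$-factor, and $\hat{\w}$ and all its derivatives must always be placed in $L^\infty(\R)$ via the periodic Sobolev embedding $H^k_{\mathrm{per}}(0,T)\hookrightarrow W^{k-1,\infty}(\R)$; you can never use $\|fg\|_{L^2}\le\|f\|_{L^\infty}\|g\|_{L^2}$ with $g$ a $\hat{\w}$-term. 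This, rather than bare derivative counting, is what forces $\|\hat{\w}\|_{H^3_{\mathrm{per}}}$ in the $L^2$-bound (two derivatives land on $\hat{\w}$, and $W^{2,\infty}$-control requires $H^3_{\mathrm{per}}$) and $\|\hat{\w}\|_{H^4_{\mathrm{per}}}$ in the $H^1$-bound. Your phrase ``distributing the one and two derivatives onto the $\hat{\w}$-factor'' does identify the worst case, but stating explicitly that $\hat{\w}$ is never measured in $L^2(\R)$ would make the argument airtight; the same remark applies to the factors $\phi'$, $\phi''$ appearing in $\mathcal{S}$, and to $(1-\gamma_x)\mathcal{R}_{2,2}(\hat{\w},\hat{\v})$, where the split must be $\|\hat{\v}\|_{L^2}\|\hat{\w}\|_{L^\infty}$ and not the reverse.
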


\subsection{Modulation in the purely co-periodic setting}

The forward-modulated perturbation $\mathring\w(x,t) = \w(x,t) - \phi(x+\sigma(t))$ fulfills the semilinear system 
\begin{align*}
    (\partial_t -\mathcal{L}_0) (\mathring{\w}(t) - \phi'\sigma) &= \mathcal{R}_4(\mathring{\w}(t), \sigma(t)) +  (\phi'(\cdot+ \sigma(t)) - \phi') \sigma_t(t)
\end{align*}
with  
\begin{align*}
    \mathcal{R}_4(\mathring{\w}(t), \sigma(t)) = \mathcal{R}_1(\phi(\cdot+\sigma(t)))(\mathring{\w}(t)) - (\mathcal{N}'(\phi) - \mathcal{N}'(\phi(\cdot+\sigma(t)))) \mathring{\w}(t).
\end{align*}
Introducing the temporal modulation function
\begin{align}
    \label{choice_of_sigma}
    \sigma(t) = \chi(t)\Pi(0) \tilde{\w}_0 + \int_0^t \chi(t-s) \Pi(0)\left(\mathcal{R}_4(\mathring{\w}(s), \sigma(s)) +(\phi'(\cdot+ \sigma(s)) - \phi') \sigma_s(s) \right)\,ds
\end{align}
gives rise to the Duhamel formula
\begin{align}
     \label{semilinear_w}
     \mathring{\w}(t) = \tilde{S}_1(t) \tilde{\w}_0 + \int_0^t \tilde{S}_1(t-s)\left(\mathcal{R}_4(\mathring{\w}(s), \sigma(s)) +(\phi'(\cdot+ \sigma(s)) - \phi') \sigma_s(s) \right)\,ds.
\end{align}
By a standard fixed point argument, we have local existence of $\sigma$.
\begin{proposition}
\label{local_existence_sigma}
Let $\w$ and $T_{\textrm{max}}$ be as in Proposition \ref{prop_existence_w}. There exists a maximal time $t_{\textrm{max}, \sigma} \leq T_{\textrm{max}}$ such that (\ref{choice_of_sigma}) with $\mathring\w(x,t) = \w(x,t) - \phi(x+\sigma(t))$ has a unique solution
    \begin{align*}
        \sigma\in C^1([0,t_{\textrm{max},\sigma});\R) \text{ with }  \sigma(0) = 0
\textrm{ and }
    |(\sigma(t),  \sigma_t(t))| <   \frac{1}{2},  \quad t\in [0,t_{\textrm{max},\sigma}).
\end{align*}
If $t_{\textrm{max},\sigma} < \tau_{\textrm{max}}$, then $
    \limsup_{t \uparrow t_{\textrm{max},\sigma}}|(\sigma(t),  \sigma_t(t))| \geq \frac{1}{2}$. 
\end{proposition}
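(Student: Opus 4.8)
The plan is to establish Proposition~\ref{local_existence_sigma} by a contraction-mapping argument applied to the integral equation~(\ref{choice_of_sigma}), regarded as a fixed-point problem for the pair $(\sigma,\sigma_t)$ on a short time interval. First I would fix the solution $\w \in C([0,T_{\textrm{max}}); H^6_{\textrm{per}}(0,T)) \cap C^1([0,T_{\textrm{max}}); H^4_{\textrm{per}}(0,T))$ from Proposition~\ref{prop_existence_w}, and for a small $t_0 > 0$ and small radius $r \in (0,\tfrac12)$ define the complete metric space
\[
X_{t_0,r} = \bigl\{ \sigma \in C^1([0,t_0];\R) : \sigma(0)=0,\ \|(\sigma,\sigma_t)\|_{C([0,t_0])} \leq r \bigr\},
\]
equipped with the $C^1$-norm. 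On this space I would define the map $\Psi$ sending $\sigma$ to the right-hand side of~(\ref{choice_of_sigma}), where throughout $\mathring{\w}(x,t) = \w(x,t) - \phi(x+\sigma(t))$ depends on $\sigma$ through the second argument of $\phi$; since $\Pi(0)$ lands in $\textrm{span}\{\phi'\}$ and $\chi \in C^\infty$ with $\chi(t)=0$ on $[0,1]$, the output is genuinely $C^1$ in $t$ and vanishes to first order at $t=0$, so $\Psi(\sigma) \in C^1([0,t_0];\R)$ with $\Psi(\sigma)(0)=0$.

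The core estimates are that $\Psi$ maps $X_{t_0,r}$ into itself and is a contraction for $t_0$ small. For the self-mapping property I would use that $\phi$ is smooth and $T$-periodic (hence bounded with bounded derivatives), that $\mathring{\w}(t) = \w(t) - \phi(\cdot+\sigma(t))$ stays bounded in $H^4_{\textrm{per}}(0,T)$ uniformly for $t \in [0,t_0]$ and $|\sigma| \le r$ (using continuity of $\w$ in $H^6_{\textrm{per}}$ and of $\sigma \mapsto \phi(\cdot+\sigma)$ in $H^6_{\textrm{per}}$), and the nonlinear bounds~(\ref{formal_nonlinear bounds}) applied pointwise via the Sobolev embedding $H^1_{\textrm{per}}(0,T) \hookrightarrow L^\infty$; this yields $\|\mathcal{R}_4(\mathring{\w}(s),\sigma(s))\|_{L^2} + |(\phi'(\cdot+\sigma(s)) - \phi')\sigma_s(s)| \le M$ for some constant $M = M(\w,r)$, whence $\|\Psi(\sigma)\|_{C^1([0,t_0])} \le C(\|\tilde{\w}_0\|_{L^2} \cdot \mathbf{1}_{[\text{supp}\,\chi]} + t_0 M)$. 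Shrinking $t_0$ (and, if needed, noting $\chi$ vanishes near $0$ so the first term contributes only once $t_0$ exceeds the support threshold, though in general one simply absorbs it into $r$) makes the bound $\le r$. For the contraction estimate I would subtract two copies of~(\ref{choice_of_sigma}) for $\sigma^{(1)},\sigma^{(2)}$ and exploit local Lipschitz continuity in $\sigma$ of $s \mapsto \phi(\cdot+\sigma(s))$, of $\mathcal{R}_4$ (which depends on $\sigma$ both through $\mathring{\w}$ and through $\phi(\cdot+\sigma)$, all smooth and thus Lipschitz on the bounded set), and of $\sigma_s \mapsto (\phi'(\cdot+\sigma(s))-\phi')\sigma_s(s)$; the factor $t_0$ from the time integral then gives a Lipschitz constant $< 1$ for $t_0$ small. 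Banach's fixed-point theorem produces a unique $\sigma$ on $[0,t_0]$, and a standard continuation argument extends it to a maximal interval $[0,t_{\textrm{max},\sigma})$ with $t_{\textrm{max},\sigma} \le T_{\textrm{max}}$; replacing $T_{\textrm{max}}$ by $\tau_{\textrm{max}}$ is harmless since $\tau_{\textrm{max}} \le T_{\textrm{max}}$ and we only need $\sigma$ alongside $\v$.

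Finally, the blow-up alternative: if $t_{\textrm{max},\sigma} < \tau_{\textrm{max}}$ then necessarily $\limsup_{t\uparrow t_{\textrm{max},\sigma}} |(\sigma(t),\sigma_t(t))| \ge \tfrac12$. I would prove the contrapositive: if $|(\sigma(t),\sigma_t(t))| \le \tfrac12 - \eta$ for all $t < t_{\textrm{max},\sigma} < \tau_{\textrm{max}}$ and some $\eta>0$, then on a fixed-length window the contraction argument above (with uniform constants, since $\w$ is continuous up to $\tau_{\textrm{max}} > t_{\textrm{max},\sigma}$ and the relevant quantities stay bounded) can be re-run starting from any time close to $t_{\textrm{max},\sigma}$, extending $\sigma$ past $t_{\textrm{max},\sigma}$ and contradicting maximality. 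I expect the main obstacle to be bookkeeping the dependence on $\sigma$ in $\mathcal{R}_4$ carefully — it enters through both $\phi(\cdot+\sigma)$ and $\mathring{\w} = \w - \phi(\cdot+\sigma)$, so one must verify that the composition map $\sigma \mapsto \mathcal{R}_4(\mathring{\w}(\cdot),\sigma)$ is Lipschitz from $C([0,t_0])$ into $C([0,t_0];L^2)$ with a constant independent of the small $t_0$, using smoothness of $\phi$ and the nonlinear structure~(\ref{def_R_1})–(\ref{formal_nonlinear bounds}); once that is in hand the argument is the standard Picard iteration.
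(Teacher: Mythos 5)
Your contraction-mapping argument on the $C^1$-ball $X_{t_0,r}$ is precisely the ``standard fixed point argument'' the paper invokes for this proposition (it gives no detailed proof), and it mirrors the explicit contraction scheme the paper does spell out for the analogous Proposition~\ref{prop_existence_modulation functions} in Appendix~\ref{proof_local_mod}. One small streamlining worth noting: since $\chi \equiv 0$ on $[0,1]$ and $\chi(t-s)=0$ whenever $t\le 1$, the map $\Psi$ is identically zero for $t\le 1$ regardless of the input, so the unique fixed point is $\sigma\equiv 0$ on $[0,1]$ and the nontrivial contraction need only be run starting from $t_0=1$, exactly as the paper exploits $s_p(t)=0$ for $t\in[0,1]$ in the $\gamma$-case.
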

Furthermore, the following nonlinear bound holds.
\begin{lemma}
\label{lemma_nonlinear_bounds_w}
   Let $K>0$. There exists  a constant $C>0$ such that
\begin{align*}
    ||\mathcal{R}_4(\mathring{\w}, \sigma)||_{H^6_{\textrm{per}}(0,T)} \leq C||\mathring{\w}||_{H^6_{\textrm{per}}(0,T)} \left(||\mathring{\w}||_{H^6_{\textrm{per}}(0,T)} + |\sigma| \right), \quad || (\phi'(\cdot+ \sigma - \phi') \sigma_t||_{H^6_{\textrm{per}}(0,T)} \leq C |\sigma||\sigma_t|,
\end{align*}
    provided 
    \begin{align*}
      |\sigma| + |\sigma_t| + ||\mathring{\w}||_{H^6_{\textrm{per}}(0,T)} \leq K.
    \end{align*}
\end{lemma}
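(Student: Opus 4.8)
The plan is to prove both nonlinear bounds in Lemma \ref{lemma_nonlinear_bounds_w} by unwinding the definitions of $\mathcal{R}_4$ and of the residual nonlinearity $\mathcal{R}_1$, and then applying the fact that $H^6_{\textrm{per}}(0,T)$ is a Banach algebra together with a Taylor-expansion argument for the coefficient differences involving $\phi(\cdot+\sigma)-\phi$. First I would recall that, since $H^6_{\textrm{per}}(0,T) \hookrightarrow C^5_{\textrm{per}}(0,T)$ is an algebra, the cubic map $\mathcal{N}$ and its derivative $\mathcal{N}'$ (which are polynomial in their arguments) are smooth on $H^6_{\textrm{per}}(0,T)$ and, restricted to the ball of radius $K+\|\phi\|_{H^6_{\textrm{per}}}$, satisfy
\begin{align*}
\|\mathcal{N}(\psi_1)-\mathcal{N}(\psi_2)-\mathcal{N}'(\psi_2)(\psi_1-\psi_2)\|_{H^6_{\textrm{per}}} &\leq C\|\psi_1-\psi_2\|_{H^6_{\textrm{per}}}^2,\\
\|(\mathcal{N}'(\psi_1)-\mathcal{N}'(\psi_2))\psi_3\|_{H^6_{\textrm{per}}} &\leq C\|\psi_1-\psi_2\|_{H^6_{\textrm{per}}}\|\psi_3\|_{H^6_{\textrm{per}}},
\end{align*}
with $C$ depending only on $K$ and $\phi$. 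This is the quantitative version of the pointwise bounds (\ref{formal_nonlinear bounds}), lifted to the algebra.

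For the first bound, I would apply the first display above with $\psi_1 = \mathring{\w}+\phi(\cdot+\sigma)$ and $\psi_2 = \phi(\cdot+\sigma)$, which gives $\|\mathcal{R}_1(\phi(\cdot+\sigma))(\mathring{\w})\|_{H^6_{\textrm{per}}} \leq C\|\mathring{\w}\|_{H^6_{\textrm{per}}}^2$ uniformly in $|\sigma|\leq K$ (using that $\sigma\mapsto\phi(\cdot+\sigma)$ is a bounded curve in $H^6_{\textrm{per}}$). For the remaining term $(\mathcal{N}'(\phi)-\mathcal{N}'(\phi(\cdot+\sigma)))\mathring{\w}$, I would use the second display with $\psi_1=\phi$, $\psi_2=\phi(\cdot+\sigma)$, $\psi_3=\mathring{\w}$, together with the elementary estimate $\|\phi-\phi(\cdot+\sigma)\|_{H^6_{\textrm{per}}} \leq C|\sigma|$ (from $\phi\in C^\infty_{\textrm{per}}$ and the fundamental theorem of calculus applied to $\sigma\mapsto\phi(\cdot+\sigma)$, which is $C^1$ into $H^6_{\textrm{per}}$ since $\phi\in H^7_{\textrm{per}}$). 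Summing the two contributions and factoring gives exactly $C\|\mathring{\w}\|_{H^6_{\textrm{per}}}(\|\mathring{\w}\|_{H^6_{\textrm{per}}}+|\sigma|)$.

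For the second bound, I would write $\phi'(\cdot+\sigma)-\phi' = \int_0^\sigma \phi''(\cdot+\tau)\,d\tau$, so that $\|\phi'(\cdot+\sigma)-\phi'\|_{H^6_{\textrm{per}}} \leq |\sigma|\sup_{|\tau|\leq K}\|\phi''(\cdot+\tau)\|_{H^6_{\textrm{per}}} \leq C|\sigma|$, again using $\phi\in C^\infty_{\textrm{per}}$ so all the needed derivatives lie in $H^6_{\textrm{per}}$. Multiplying by the scalar $\sigma_t$ yields $\|(\phi'(\cdot+\sigma)-\phi')\sigma_t\|_{H^6_{\textrm{per}}} \leq C|\sigma||\sigma_t|$, which is the claim. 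I do not anticipate a genuine obstacle here; the only point requiring mild care is keeping the constants uniform over the ball $|\sigma|+|\sigma_t|+\|\mathring{\w}\|_{H^6_{\textrm{per}}}\leq K$, which follows because all the estimates above are on fixed bounded sets and $\phi$ is smooth and periodic, hence its translates form a compact (in particular bounded) family in $H^6_{\textrm{per}}(0,T)$.
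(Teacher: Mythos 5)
Your proposal is correct. The paper states Lemma~\ref{lemma_nonlinear_bounds_w} without proof, treating it as a routine consequence of the Banach-algebra property of $H^6_{\textrm{per}}(0,T)$, the polynomial (cubic) structure of $\mathcal{N}$, and the smoothness and periodicity of $\phi$; your argument supplies exactly this routine derivation. One small simplification you could note: since translation preserves the $H^6_{\textrm{per}}(0,T)$-norm of a $T$-periodic function exactly, the family of translates of $\phi$ has constant norm, so the compactness appeal at the end is not needed.
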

\subsection{Choice of the spatial-temporal phase modulation}
\label{section_modualtion_functions_choices}
We have the Duhamel formula for $\hat{\v}(t)$,
\begin{align*}
    \hat{\v}(t) = e^{\mathcal{L}_0 t} \v_0 - \phi'\gamma(t) + \int_0^t e^{\mathcal{L}_0 (t-s)} \Bigl( &\mathcal{R}_3(\hat{\w}(s),\hat{\v}(s),\gamma(s)) \\
    &-\sigma_s(s)\hat{\v}_x(s) + (1-\gamma_x(s))\mathcal{R}_{2,2}(\hat{\v}(s),\hat{\w}(s)) + \mathcal{T}(\hat{\w}(s),\gamma(s)) \Bigr) \,ds\\
    +  \gamma_x(t) \hat{\w}(t) + \gamma_x(t) \hat{\v}(t),
\end{align*}
under the condition that $\gamma(0) = 0.$ 
We make the implicit choice
    \begin{align}
    \label{implicit_gamma}
    \begin{split}
    \gamma(t) = s_p(t)\v_0 + \int_0^t s_p(t-s)\Bigl(&\mathcal{R}_3(\hat{\w}(s),\hat{\v}(s),\gamma(s)) -\sigma_s(s)\hat{\v}_x(s) \\ &+ (1-\gamma_x(s))\mathcal{R}_{2,2}(\hat{\v}(s),\hat{\w}(s)) + \mathcal{T}(\hat{\w}(s),\gamma(s)) \Bigr)\,ds 
    \end{split}
\end{align}
which reduces the Duhamel formula for $\hat{\v}(t)$ to 
\begin{align}
\label{duhamel_inverse_v}
\begin{split}
    \hat{\v}(t) = \tilde{S}_2(t) \v_0 +  \gamma_x(t) \hat{\w}(t) + \gamma_x(t) \hat{\v}(t) + \int_0^t \tilde{S}_2(t-s) \Bigl( &\mathcal{R}_3(\hat{\w}(s),\hat{\v}(s),\gamma(s)) -\sigma_s(s)\hat{\v}_x(s) \\
    &+ (1-\gamma_x(s))\mathcal{R}_{2,2}(\hat{\v}(s),\hat{\w}(s)) + \mathcal{T}(\hat{\w}(s),\gamma(s)) \Bigr) \,ds.
\end{split}
\end{align}
Setting $t= 0$ in (\ref{implicit_gamma}) and using that $s_p(0) = 0$, one indeed verifies that $\gamma(0) = 0$.
\begin{proposition}
\label{prop_existence_modulation functions}
    Let $\w$ and $T_{\textrm{max}}$ as in Proposition \ref{prop_existence_w}, $\v$, $\v_0$ and $\tau_{\textrm{max}}$ as in Proposition  \ref{prop_existence_unmodulated_v} and $\sigma$ and $t_{\textrm{max},\sigma}$ as in Proposition \ref{local_existence_sigma}.
    Furthermore, let $0<c < \frac{1}{2}$ be a constant such that $||f||_{L^\infty} \leq \frac{1}{c}||f||_{H^1}$ for all $f \in H^1(\R)$.
 There exists a maximal time $t_{\textrm{max},\gamma} \leq \min\{\tau_{\textrm{max}},t_{\textrm{max},\sigma}\}$ such that (\ref{implicit_gamma}) with $\hat{\w}(t) = \w(\cdot -\sigma(t)) - \phi$ and $\hat{\v}(t) = \u(\cdot- \sigma(t) - \gamma(\cdot,t),t)- \hat{\w}(t)- \phi$ has a unique solution
    \begin{align*}
        \gamma \in C([0,t_{\textrm{max},\gamma}); H^5(\R)) \cap C^1([0,t_{\textrm{max},\gamma}); H^3(\R)) \text{ with }  \gamma(0) = 0
    \end{align*}
satisfying
\begin{align}
\label{max_gamma_cond}
    ||(\gamma(t),  \gamma_t(t))||_{H^5\times H^3} <  \frac{c}{2},  \quad t\in [0,t_{\textrm{max},\gamma}).
\end{align}
If $t_{\textrm{max},\gamma} < \min\{\tau_{\textrm{max}},t_{\textrm{max},\sigma}\}$, then
\begin{align}
\label{finite_time_cond_gamma}
    \limsup_{t \uparrow t_{\textrm{max},2}}||\gamma(t),  \gamma_t(t)||_{H^5 \times H^3} \geq \frac{c}{2}.
\end{align}
\end{proposition}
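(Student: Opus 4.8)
The plan is to establish local existence of $\gamma$ via the contraction mapping principle applied to the fixed-point operator implicitly defined by (\ref{implicit_gamma}). First I would fix a small time horizon $t_0 > 0$ and work in the complete metric space $\mathcal{X}_{t_0} = \{ \gamma \in C([0,t_0]; H^5(\R)) \cap C^1([0,t_0]; H^3(\R)) : \gamma(0) = 0, \ \|(\gamma(t),\gamma_t(t))\|_{H^5 \times H^3} \leq c/2 \}$, using the solutions $\w$, $\v$, $\sigma$ from Propositions \ref{prop_existence_w}, \ref{prop_existence_unmodulated_v}, \ref{local_existence_sigma} as given data on $[0,t_0]$ (which is legitimate since $\w \in C([0,T_{\max}); H^6_{\textrm{per}})$, etc., are already continuous and hence bounded on compact subintervals). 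For $\gamma \in \mathcal{X}_{t_0}$ one forms $\hat{\w}(t) = \w(\cdot - \sigma(t)) - \phi$ — independent of $\gamma$ — and $\hat{\v}(t) = \u(\cdot - \sigma(t) - \gamma(\cdot,t),t) - \hat{\w}(t) - \phi$, and then defines $\Gamma[\gamma](t)$ to be the right-hand side of (\ref{implicit_gamma}). The task is to show $\Gamma$ maps $\mathcal{X}_{t_0}$ into itself and is a contraction for $t_0$ small.

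The key analytic inputs are the linear estimates on $s_p(t)$ from Proposition \ref{prop_linear_l_2} (with $l,j \in \{0,\dots\}$, giving $H^5$ and, after differentiating in $t$, $H^3$ bounds since $s_p$ maps into $\phi' \cdot (\text{scalar})$ and $\phi$ is smooth $T$-periodic) together with the nonlinear bounds of Lemma \ref{lemma_all_nonlinear_bounds}, which control $\mathcal{R}_3$, $\sigma_t \hat{\v}_x$, $(1-\gamma_x)\mathcal{R}_{2,2}$ and $\mathcal{T}$ in $L^2$ and $H^1$. For the $H^5$-bound on $\Gamma[\gamma](t)$ one would apply Proposition \ref{prop_linear_l_2} with $l$ up to $5$; since on $[0,t_0]$ the weight $(1+t-s)^{-(l+j)/2}$ is comparable to $1$, the estimate reads roughly $\|\Gamma[\gamma](t)\|_{H^5} \lesssim \|\v_0\|_{L^2} + t_0 \sup_{s \leq t_0}(\text{nonlinear terms in } L^2)$, and the nonlinear terms are bounded in terms of $\|\hat{\v}\|_{H^2}$, $\|(\gamma_x,\gamma_t)\|_{H^2 \times H^1}$, $\|\hat{\w}\|_{H^3_{\textrm{per}}}$, $|\sigma_t|$ — all finite on $[0,t_0]$ by the assumed a priori bounds and the continuity of $\w,\v,\sigma$ — plus one factor that is small with $c/2$. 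Actually, since $\|\v_0\|_{L^2}$ need not be smaller than $c/2$, the self-mapping property must instead be obtained by shrinking $t_0$: one uses $s_p(t)$ applied to $\v_0$ together with $s_p(0) = 0$ and strong continuity of the relevant semigroup-type family to make $\|s_p(t)\v_0\|_{H^5 \times H^3}$ as small as one wishes for $t \leq t_0$, and then the Duhamel integral is $O(t_0)$ small; the same two observations give the contraction estimate after subtracting two iterates and using local Lipschitz continuity of all nonlinearities in Lemma \ref{lemma_all_nonlinear_bounds} (the differences in $\gamma$ also feed into $\hat{\v}$ through the shift $\gamma(\cdot,t)$, which is Lipschitz because $\u(t) \in C^1_{\textrm{ub}}$). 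Once $\gamma$ is constructed on a maximal interval $[0,t_{\max,\gamma})$, the blow-up alternative (\ref{finite_time_cond_gamma}) follows in the standard way: if $t_{\max,\gamma} < \min\{\tau_{\max}, t_{\max,\sigma}\}$ and $\limsup_{t \uparrow t_{\max,\gamma}} \|(\gamma,\gamma_t)\|_{H^5 \times H^3} < c/2$, then $\gamma$ extends continuously to $t_{\max,\gamma}$ with value still strictly inside the ball, and the local existence argument restarted at $t_{\max,\gamma}$ would extend $\gamma$ past $t_{\max,\gamma}$, contradicting maximality. One also checks $\gamma(0) = 0$ by setting $t = 0$ in (\ref{implicit_gamma}) and invoking $s_p(0) = 0$, and verifies the equivalence with the original defining relation $\hat{\v}(t) = \u(\cdot - \sigma(t) - \gamma(\cdot,t),t) - \hat{\w}(t) - \phi$ by differentiating and comparing with (\ref{modulated_v}).

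The main obstacle I anticipate is the implicit nature of (\ref{implicit_gamma}): the unknown $\gamma$ appears not only under the Duhamel integral through $\mathcal{R}_3, \mathcal{T}$, etc., but also inside the \emph{argument} of $\u$ in the definition of $\hat{\v}$, so $\hat{\v}$ depends on $\gamma$ in a nonlocal and nonlinear way, and the map $\gamma \mapsto \hat{\v}$ must be shown to be locally Lipschitz from $\mathcal{X}_{t_0}$ into $C([0,t_0]; H^3(\R))$. This requires the change-of-variables bound $\|f(\cdot - g_1) - f(\cdot - g_2)\|_{H^3} \lesssim \|f\|_{H^4}\|g_1 - g_2\|_{W^{3,\infty}}$ for $g_i$ with small $W^{3,\infty}$-norm, applied with $f = \u(t) - \phi$; this costs one extra derivative on $\u$, which is exactly why $\w_0 \in H^6_{\textrm{per}}$ and $\v_0 \in H^3$ are assumed (so that $\u(t) - \phi$ lies in $H^4$ locally, via $\w$ and $\v$), and one must be careful that $\gamma_x$ stays small enough that $1 - \gamma_x$ is bounded away from $0$, which is guaranteed by (\ref{max_gamma_cond}) and the embedding $H^1 \hookrightarrow L^\infty$ with constant $1/c$ as in the hypothesis. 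A secondary technical point is verifying the claimed temporal regularity $\gamma \in C^1([0,t_{\max,\gamma}); H^3(\R))$: differentiating (\ref{implicit_gamma}) in $t$ produces $\partial_t s_p(t)\v_0$ — controlled in $H^3$ by Proposition \ref{prop_linear_l_2} with $j = 1$ — plus boundary and integral terms that are handled with the same nonlinear bounds, using that $\partial_t s_p(t)$ loses regularity only in a way compensated by the two extra derivatives available on $\v_0$.
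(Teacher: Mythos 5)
Your proposal follows essentially the same route as the paper --- a Banach fixed-point argument applied to the Duhamel formula (\ref{implicit_gamma}), a continuation step, and the standard blow-up alternative --- and you correctly identify the central technical obstacle, namely that $\hat{\v}$ depends on the unknown $\gamma$ through the shift inside the argument of $\u$, so that $\gamma\mapsto\hat{\v}(\gamma,\cdot)$ must be shown locally Lipschitz before the contraction can close. The paper packages exactly this, together with the Lipschitz bounds on the Duhamel integrand, into Lemma~\ref{contract_estimates_mod}, which your proposal would effectively need to reconstruct (Lemma~\ref{lemma_all_nonlinear_bounds} alone gives size bounds, not difference bounds).

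Where you deviate, and where your argument as written leaves a small gap, is the initial layer. You correctly notice that $\|s_p(t)\v_0\|_{H^5\times H^3}$ need not be below $c/2$ a priori, and you propose to fix this by appealing to ``strong continuity of the relevant semigroup-type family'' at $t=0$. That continuity is not established anywhere in the text, and $s_p(t)$ is \emph{not} a $C_0$-semigroup, so this is not a property you can invoke for free --- proving it would require unpacking the Bloch representation of $s_p$. The paper does not need it: the decomposition in \S\ref{section_linear_l2} is built with a temporal cutoff so that $s_p(t)\equiv 0$ for $t\in[0,1]$. Consequently $\gamma(t)=0$ on $[0,1]$ is immediate, and the contraction is then run on $X_{t_0,\tau_0}$ as a continuation from the known solution $\tilde\gamma$ on $[0,t_0]$, $t_0\geq 1$; the contribution from the $[0,t_0]$-part of the integral together with $s_p(t)\v_0$ is a fixed quantity equal to $\tilde\gamma(t_0)$ at $t=t_0$ (hence already inside the ball), and the contraction constant comes purely from the $O(\tau_0)$-small tail integral on $[t_0,t_0+\tau_0]$. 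Using this explicitly stated vanishing of $s_p$ on $[0,1]$ makes the self-mapping step automatic and avoids the unproven strong-continuity assertion that your version rests on. Apart from that, your sketch of the maximality/blow-up argument and your identification of the regularity costs (one extra derivative from the change of variables, $C^1$-in-time via $\partial_t s_p$ with $j=1$) are consistent with the paper's Appendix~\ref{proof_local_mod}.
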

\begin{proof}
    See Appendix \ref{proof_local_mod}.
\end{proof}
\begin{corollary}
\label{cor_well_posed_inverse}
 Let $\w$ and $T_{\textrm{max}}$ as in Proposition \ref{prop_existence_w} and $\v$, $\v_0$ and $\tau_{\textrm{max}}$ as in Proposition \ref{prop_existence_unmodulated_v}. Let $\gamma$, $\sigma$ and $t_{\textrm{max},\sigma}$, $t_{\textrm{max},\gamma}$ as in Propositions \ref{local_existence_sigma} and \ref{prop_existence_modulation functions}. Then,  the inverse-modulated perturbation
  $\hat{\v} \in C([0,t_{\textrm{max},2}), L^2(\R))$ defined by (\ref{define_inverse_v}) satisfies (\ref{duhamel_inverse_v}) with $\hat{\w}(t) = \w(\cdot -\sigma(t)) - \phi$ and $\hat{\v}(t) \in H^3(\R)$  for all $t \in [0,t_{\textrm{max},\gamma})$.
\end{corollary}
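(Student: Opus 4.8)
The plan is to treat the corollary in two parts. First I would establish the regularity claim $\hat{\v}(t)\in H^3(\R)$; this legitimizes the (formal) derivation of the modulated equation (\ref{modulated_v}) carried out in Appendix~\ref{derivation_inv} and places $\hat{\v}(t)$ in the function spaces on which the semigroup decomposition $e^{\mathcal{L}_0 t}=\tilde S_2(t)+\phi' s_p(t)$ and Proposition~\ref{prop_linear_l_2} act. Then (\ref{duhamel_inverse_v}) will follow by applying the variation-of-constants formula to (\ref{modulated_v}) and feeding in the definition (\ref{implicit_gamma}) of $\gamma$.

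For the regularity, observe that $\|\gamma(t)\|_{H^5}<\tfrac c2$ together with the choice of $c$ in Proposition~\ref{prop_existence_modulation functions} gives $\|\gamma_x(t)\|_{L^\infty}\le \tfrac12$, so $\psi_t\colon x\mapsto x-\sigma(t)-\gamma(x,t)$ is a $C^1$-diffeomorphism of $\R$ with $\psi_t'=1-\gamma_x(\cdot,t)\in[\tfrac12,\tfrac32]$. Using $-\hat{\w}(x,t)-\phi(x)=-\w(x-\sigma(t),t)$, formula (\ref{define_inverse_v}) becomes
\begin{align*}
\hat{\v}(x,t)=\big(\w(\psi_t(x),t)-\w(x-\sigma(t),t)\big)+\v(\psi_t(x),t).
\end{align*}
The term $\v(\psi_t(\cdot),t)$ belongs to $H^3(\R)$: differentiating gives $\big(\v(\psi_t(\cdot),t)\big)'=\big(\v_x(\psi_t(\cdot),t)\big)(1-\gamma_x(\cdot,t))$, and iterating reduces matters to composing the derivatives of $\v(t)\in H^3(\R)$ with $\psi_t$ — controlled in $L^2(\R)$ by the change of variables, since $\psi_t'$ is bounded above and away from zero — times factors built from $\gamma_x(t),\dots,\partial_x^3\gamma(t)$, which are bounded because $\gamma(t)\in H^5(\R)\hookrightarrow C^4$. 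For the first term I would write $\w(\psi_t(x),t)-\w(x-\sigma(t),t)=-\gamma(x,t)\int_0^1\w_x\big(x-\sigma(t)-\th\gamma(x,t),t\big)\,d\th$; since $\w(t)\in H^6_{\textrm{per}}(0,T)\hookrightarrow C^5$, the integrand is, for each fixed $\th$, a bounded function of $x$ whose first three derivatives are again bounded (chain rule, using $\gamma(t)\in C^4$), so expanding the product with $\gamma(t)\in H^5(\R)\subset L^2(\R)\cap L^\infty(\R)$ via Leibniz' rule produces a sum of $L^2(\R)$-terms up to three derivatives, i.e.\ an $H^3(\R)$-function. The only delicate point is that $\w_x$ is periodic and not localized; but every Leibniz summand carries at least one factor $\gamma(t)$ or a derivative thereof, hence an $L^2$-factor. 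Continuity of $t\mapsto\hat{\v}(t)$ into $H^3(\R)$ (hence, a fortiori, into $L^2(\R)$) then follows from the continuity of $\w,\v,\sigma,\gamma$ in their respective spaces and of composition.

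With this regularity, $\hat{\v}$ solves (\ref{modulated_v}) in the strong sense, with $\hat{\v}(0)=\v_0$, and by Lemma~\ref{lemma_all_nonlinear_bounds} the right-hand side $\mathcal{G}(s)$ of (\ref{modulated_v}) is a continuous $L^2(\R)$-valued function of $s$ on $[0,t_{\textrm{max},\gamma})$. Applying the variation-of-constants formula to $\mathbf{z}(t):=\hat{\v}(t)+\phi'\gamma(t)-\gamma_x(t)\hat{\w}(t)-\gamma_x(t)\hat{\v}(t)$, which satisfies $(\partial_t-\mathcal{L}_0)\mathbf{z}=\mathcal{G}$ and, since $\gamma(0)=0$, also $\mathbf{z}(0)=\v_0$, yields the Duhamel identity for $\hat{\v}(t)$ displayed just before (\ref{implicit_gamma}). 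Splitting $e^{\mathcal{L}_0\cdot}=\tilde S_2(\cdot)+\phi's_p(\cdot)$ in both the $\v_0$-term and the convolution integral, the collected $\phi'$-components equal $\phi'\big(s_p(t)\v_0+\int_0^t s_p(t-s)\mathcal{G}(s)\,ds\big)=\phi'\gamma(t)$ by (\ref{implicit_gamma}), which cancels the $-\phi'\gamma(t)$ term; what remains is precisely (\ref{duhamel_inverse_v}).

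I expect the regularity step to be the main obstacle. In contrast to the purely localized setting of \cite{haragus}, the change of variables $\psi_t$ acts here simultaneously on the periodic profile $\w$ and on the localized perturbation $\v$, so one must check separately that $\v(\psi_t(\cdot),t)$ stays in the localized scale $H^3(\R)$ and that the difference $\w(\psi_t(\cdot),t)-\w(\cdot-\sigma(t),t)$ does as well — which forces a careful separation of the decaying factors (powers and derivatives of $\gamma$) from the merely bounded ones (the periodic profile and its derivatives, and $1-\gamma_x$). Once these composition estimates are in hand, the remaining passage to (\ref{duhamel_inverse_v}) is the purely algebraic bookkeeping indicated above.
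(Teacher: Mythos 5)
Your argument matches the paper's in substance. The paper's proof is shorter: it rewrites $\hat{\v}(x,t)=\v(\psi_t(x),t)+\big(\w(\psi_t(x),t)-\w(x-\sigma(t),t)\big)$ exactly as you do, bounds the $\w$-difference in $H^3(\R)$ by $\|\gamma(t)\|_{H^3}$ directly from the mean value theorem and $\w(t)\in H^5_{\textrm{per}}(0,T)\hookrightarrow W^{4,\infty}(\R)$, and handles $\v(\psi_t(\cdot),t)$ via the chain and substitution rules with $\|\gamma_x(t)\|_{L^\infty}\le\tfrac12$; your fundamental-theorem-of-calculus integral representation and Leibniz bookkeeping are just a fleshed-out version of the same mean-value-theorem step, and your change-of-variables argument for $\v\circ\psi_t$ is the same. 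The paper does not spell out the Duhamel-formula derivation in the corollary's proof (it is treated as immediate from the derivation of (\ref{modulated_v}), the choice (\ref{implicit_gamma}), and $s_p(0)=0$); your explicit variation-of-constants and $e^{\mathcal{L}_0\cdot}=\tilde S_2(\cdot)+\phi's_p(\cdot)$ splitting supplies that bookkeeping but is not a different route. No gaps; this is essentially the paper's proof with additional detail.
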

\begin{proof}
   Let $t \in [0,t_{\textrm{max},\gamma})$. We observe
    \begin{align*}
     \hat{\v}(x,t)  =  \v(x- \sigma(t) - \gamma(x,t),t) + \w(x- \sigma(t) - \gamma(x,t),t) - \hat{\w}(x,t) - \phi(x) \\
     = \v(x- \sigma(t) - \gamma(x,t),t) + \w(x- \sigma(t) - \gamma(x,t),t) - \w(x - \sigma(t),t) 
    \end{align*}
    yielding on the one hand, with $ \w(t) \in H^5_{\textrm{per}}(0,T) \hookrightarrow W^{4,\infty}(\R)$ and the mean value theorem,
    \begin{align*}
        ||\w(x- \sigma(t) - \gamma(x,t),t) - \w(x - \sigma(t),t)||_{H^3} \lesssim ||\gamma(t)||_{H^3}.
    \end{align*}
    On the other hand, since $ \gamma(t) \in H^4 \hookrightarrow W^{3,\infty}$, $\sup_{s \in [0,t]}||\gamma_x(s)||_{L^\infty} \leq \frac{1}{2}$ and  $\v(t) \in H^3(\R)$, we conclude
    \begin{align*}
        \hat{\v}(t) \in H^3(\R)
    \end{align*}
    with the help of the chain and substitution rule.
\end{proof}

\begin{remark}
\label{remark_on_decay_half}
  We can now provide some intuition where the decay $(1+t)^{-\frac{1}{2}}$ for $\v$ and $\gamma_x$ in (\ref{l2_results}) originates from. For this purpose, assume that $|\sigma_t|$ admits exponential decay while $||\hat{\v}||_{L^2} \approx ||\hat{\v}_x||_{L^2}$ decays at rate $(1+t)^{-\kappa}$ with $\kappa\geq\frac{1}{2}$. Considering the term $||\sigma_t \hat{\v}_x||_{L^2} \leq ||\hat{\v}_x||_{L^2}|\sigma_t|$ in (\ref{implicit_gamma}) and (\ref{duhamel_inverse_v}), this yields the integral
    \begin{align*}
        \int_0^t (1+t-s)^{-\frac{1}{2}} (1+s)^{-\kappa}e^{-s} \,ds \lesssim (1+t)^{-\frac{1}{2}}.
    \end{align*}
    By noting that $\sigma_t \hat{\v}_x$ cannot be estimated in any $L^p$-norm with $1\leq p < 2$ due to the lack of localization of $\hat{\v}$, we can only close an iterative argument with $\kappa = \frac{1}{2}$ at best. This shows that even with the additional assumption $\v_0 \in L^1(\R)$, the decay rate on $\hat{\v}$ cannot be improved.
\end{remark}

\subsection{Forward-modulation of \texorpdfstring{$\v$}{Lg} and nonlinear damping estimates}
\label{section_foward_modulated}

We wish to control $||\hat{\v}||_{H^3}$ in terms of $||\hat{\v}||_{L^2}$, $\gamma_t$, $\gamma_x$, $\sigma_t$ and $\hat{\w}$ in the nonlinear iteration argument in order to control regularity, cf. Lemma \ref{lemma_all_nonlinear_bounds}. For this purpose, we introduce the forward-modulated perturbation $\mathring{\v}$, that is,
\begin{align}
\begin{split}
   \label{def_forward_v}
        \mathring{\v}(x,t) &= \u(x,t) - \hat{\w}(x+\sigma(t) + \gamma(x,t),t) - \phi(x+\sigma(t) + \gamma(x,t))  \\
    &=\u(x,t) - \w(x+ \gamma(x,t),t)\\
    &= \v(x,t) + \w(x,t) - \w(x+\gamma(x,t),t),
\end{split}
\end{align}
which satisfies the semilinear system, cf. Appendix \ref{derivation_forward},
\begin{align}
\label{forward_equation}
      (\partial_t - \mathcal{L}_0(\mathring{\phi})) \mathring{\v}(t) = \mathcal{R}_2(\mathring{\phi})(\tilde{\w}(\cdot+\gamma(\cdot,t),t),\mathring{\v}(t))+ \mathcal{R}_5(\tilde{\w}(t),\gamma(t),\gamma_t(t)) 
\end{align}
with 
\begin{align*}
    \mathcal{L}_0(\mathring{\phi}) = \mathcal{J} \begin{pmatrix}-\beta \partial_x^2 - \alpha + 3\mathring{\phi}_1^2 + \mathring{\phi}_2^2 & 2\mathring{\phi}_1\mathring{\phi}_2 \\
    2\mathring{\phi}_1\mathring{\phi}_2  & -\beta \partial_x^2 - \alpha + \mathring{\phi}_1^2 + 3\mathring{\phi}_2^2  \end{pmatrix}  - \mathcal{I}, \quad \mathring{\phi}(x,t) = \phi(x+ \gamma(x,t)),
\end{align*}
and
\begin{align*}
    &\mathcal{R}_5(\tilde{\w}(t),\gamma(t),\gamma_t(t)) \\
    &=-\tilde{\w}_x(\cdot+\gamma(\cdot,t),t) \gamma_t(t) -\phi'(\cdot+\gamma(\cdot,t)) \gamma_t(t) \\
    & \qquad - \beta\mathcal{J} \Bigl(\tilde{\w}_x(\cdot+\gamma(\cdot,t),t)\gamma_{xx}(t)  + \tilde{\w}_{xx}(\cdot+\gamma(\cdot,t),t)(2\gamma_x(t) + \gamma_x(t)^2) \\
    &\qquad \qquad \quad+ \phi'(\cdot+\gamma(\cdot,t))\gamma_{xx}(t)  + \phi''(\cdot+\gamma(\cdot,t))(2\gamma_x(t) + \gamma_x(t)^2) \Bigr).
\end{align*}
Note that $\mathcal{R}_2(\mathring{\phi})$ is as defined in (\ref{def_R_2}) and $\tilde{\w}(t) = \w(t)- \phi$ is the umodulated perturbation of $\w(t)$.
We setup the local existence for the forward-modulated perturbation $\mathring{\v}(t)$.
\begin{corollary}
\label{cor_existence_forward}
     Let $\w$ and $T_{\textrm{max}}$ as in Proposition \ref{prop_existence_w} and $\v$, and $\tau_{\textrm{max}}$ as in (\ref{prop_existence_unmodulated_v}). Let $\gamma$, $\sigma$ and $t_{\textrm{max},\sigma}$, $t_{\textrm{max},\gamma}$ as in Propositions \ref{local_existence_sigma} and \ref{prop_existence_modulation functions}. Then,  the forward-modulated perturbation
 $\mathring{\v} \in C([0,t_{\textrm{max},\gamma}), H^3(\R)) \cap C^1([0,t_{\textrm{max},\gamma}), H^1(\R))$ defined by (\ref{def_forward_v}) satisfies (\ref{forward_equation}) with $\tilde{\w}(t) = \w(t) - \phi$.
\end{corollary}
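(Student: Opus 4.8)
The plan is to read off every assertion of Corollary~\ref{cor_existence_forward} from the explicit representation $\mathring{\v}(x,t) = \v(x,t) + \w(x,t) - \w(x+\gamma(x,t),t)$ in (\ref{def_forward_v}), combined with the local existence statements already established, and to take the evolution equation (\ref{forward_equation}) from the derivation carried out in Appendix~\ref{derivation_forward}. First I would record that, by the nesting $t_{\textrm{max},\gamma} \le \min\{\tau_{\textrm{max}}, t_{\textrm{max},\sigma}\} \le T_{\textrm{max}}$ coming from Propositions~\ref{prop_existence_w}, \ref{prop_existence_unmodulated_v}, \ref{local_existence_sigma} and \ref{prop_existence_modulation functions}, the functions $\w$, $\v$, $\sigma$ and $\gamma$ — together with the uniform bounds $\|(\gamma(t),\gamma_t(t))\|_{H^5\times H^3} < c/2$ and the embeddings $\w(t)\in H^6_{\textrm{per}}(0,T)\hookrightarrow W^{4,\infty}(\R)$, $\gamma(t)\in H^5(\R)\hookrightarrow W^{4,\infty}(\R)$ — are all available on the entire interval $[0,t_{\textrm{max},\gamma})$, where in addition $\|\gamma_x(t)\|_{L^\infty}<\tfrac12$.

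For the spatial regularity $\mathring{\v}(t)\in H^3(\R)$ I would argue exactly as in the proof of Corollary~\ref{cor_well_posed_inverse}: the summand $\v(t)$ lies in $H^3(\R)$ by Proposition~\ref{prop_existence_unmodulated_v}, while, writing
\[
\w(\cdot,t) - \w(\cdot+\gamma(\cdot,t),t) = -\,\gamma(\cdot,t)\int_0^1 \w_x\bigl(\cdot+\theta\gamma(\cdot,t),t\bigr)\,d\theta ,
\]
the chain rule together with $\w(t)\in W^{4,\infty}(\R)$, $\gamma(t)\in W^{4,\infty}(\R)$ and $\|\gamma_x(t)\|_{L^\infty}<\tfrac12$ shows the integral factor to be a $W^{3,\infty}(\R)$-function of $x$; since $H^3(\R)$ is stable under multiplication by $W^{3,\infty}(\R)$-functions (Leibniz rule and Hölder) and $\gamma(t)\in H^3(\R)$, the difference lies in $H^3(\R)$, and it depends continuously on $t$ in $H^3(\R)$ because $\w$ and $\gamma$ do in the relevant norms. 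For the time regularity I would differentiate (\ref{def_forward_v}),
\[
\partial_t\mathring{\v}(t) = \v_t(t) + \bigl(\w_t(\cdot,t)-\w_t(\cdot+\gamma(\cdot,t),t)\bigr) - \w_x(\cdot+\gamma(\cdot,t),t)\,\gamma_t(t),
\]
and check that each summand lies in $H^1(\R)$ and depends continuously on $t$ in $H^1(\R)$, using $\v\in C^1([0,\tau_{\textrm{max}});H^1(\R))$, $\w\in C^1([0,T_{\textrm{max}});H^4_{\textrm{per}}(0,T))\hookrightarrow C^1(\,\cdot\,;W^{3,\infty}(\R))$, $\gamma\in C^1([0,t_{\textrm{max},\gamma});H^3(\R))$, the same factorization trick applied to the $\w_t$-difference, and the continuity of the substitution operator $g\mapsto g(\cdot+\gamma(\cdot,t),\cdot)$ on the spaces involved. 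Finally, inserting $\mathring{\v}$ and $\mathring{\phi}(x,t)=\phi(x+\gamma(x,t))$ into (\ref{LLE_real}) and rearranging produces (\ref{forward_equation}); this computation is precisely the content of Appendix~\ref{derivation_forward}, and the regularity just obtained justifies the manipulation.

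The step I expect to require the most care is the time regularity $\mathring{\v}\in C^1([0,t_{\textrm{max},\gamma});H^1(\R))$: unlike the spatial statement, which essentially duplicates the argument of Corollary~\ref{cor_well_posed_inverse}, here one must keep track of the $\gamma_t$- and $\w_t$-contributions and confirm \emph{continuity} — not merely boundedness — in $H^1(\R)$ of composed expressions such as $\w_x(\cdot+\gamma(\cdot,t),t)\,\gamma_t(t)$. This is a routine but somewhat tedious bookkeeping of derivative counts, and the regularity choices $\w_0\in H^6_{\textrm{per}}(0,T)$, $\v_0\in H^3(\R)$ and $\gamma\in H^5(\R)$ are exactly what make it close.
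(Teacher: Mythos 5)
Your proposal is correct and follows essentially the route the paper intends: the corollary is stated without a separate proof precisely because it combines the local existence results for $\w$, $\v$, $\sigma$, $\gamma$, the mean-value/factorization and Sobolev-embedding argument already used in the proof of Corollary~\ref{cor_well_posed_inverse}, and the computation of (\ref{forward_equation}) in Appendix~\ref{derivation_forward}. Your additional bookkeeping for the $C^1([0,t_{\textrm{max},\gamma});H^1(\R))$ time regularity (differentiating $\w(\cdot+\gamma(\cdot,t),t)$ and controlling the $\w_t$- and $\gamma_t$-contributions) is exactly the fleshing-out the paper leaves implicit.
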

\begin{lemma}
\label{lemma_bounds_for_damping}
    Let $j = 1,2,3$. Fix $K>0$. There exists some $C>0$ such that for $t \in [0,t_{\textrm{max},\gamma})$, we obtain
    \begin{align*}
    ||\partial_x^j \mathcal{R}_5(\tilde{\w}(t),\gamma(t),\gamma_t(t))||_{L^2} &\leq C(||\gamma_x(t)||_{H^{j+1}} + ||\gamma_t(t)||_{H^{j}}),\\
      ||\partial_x^j\mathcal{R}_2(\mathring{\phi})(\tilde{\w}(\cdot + \gamma(\cdot,t),t), \mathring{\v}(t))||_{L^2} &\leq C||\mathring{\v}(t)||_{H^j}
    \end{align*}
    provided
    \begin{align*}
         \sup_{0\leq s \leq t} \left(||\tilde{\w}(s)||_{H^6_{\textrm{per}}(0,T)}+ ||\mathring{\v}(s)||_{H^3}+ ||(\gamma_x(s),\gamma_s(s))||_{H^4\times H^3}\right) \leq K.
    \end{align*}
\end{lemma}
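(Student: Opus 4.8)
The plan is to prove both inequalities as routine ``tame'' product estimates: expand $\partial_x^j$ with the Leibniz and Fa\`a di Bruno rules and then, in every resulting monomial, put one distinguished factor into $L^2(\R)$ and all remaining factors into $L^\infty(\R)$ via the Sobolev embeddings $H^1(\R)\hookrightarrow L^\infty(\R)$ and $H^6_{\textrm{per}}(0,T)\hookrightarrow C^5(\R)$. The structural inputs are the following. First, since $\mathcal{N}$ is a homogeneous cubic polynomial and $\tilde{\w}(\cdot+\gamma)+\mathring{\phi}=\w(\cdot+\gamma)$, one rewrites $\mathcal{R}_2(\mathring{\phi})(\tilde{\w}(\cdot+\gamma),\mathring{\v})=\big(\mathcal{N}'(\w(\cdot+\gamma))-\mathcal{N}'(\phi(\cdot+\gamma))\big)\mathring{\v}+\tfrac12\mathcal{N}''(\w(\cdot+\gamma))[\mathring{\v},\mathring{\v}]+\tfrac16\mathcal{N}'''[\mathring{\v},\mathring{\v},\mathring{\v}]$, a polynomial of degree one to three in $\mathring{\v}$ whose coefficients are polynomials in the entries of $\w(\cdot+\gamma)$ and $\phi(\cdot+\gamma)$ and whose degree-one coefficient moreover carries a factor $\tilde{\w}(\cdot+\gamma)$. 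Second, every summand of $\mathcal{R}_5$ is a product of a composition $\big(\partial_x^m\tilde{\w}\big)(\cdot+\gamma)$ or $\big(\partial_x^m\phi\big)(\cdot+\gamma)$ with $m\le 2$ and exactly one factor from the list $\gamma_t,\gamma_{xx},\gamma_x,\gamma_x^2$. Third, $\|\tilde{\w}(\cdot,t)\|_{H^6_{\textrm{per}}(0,T)}\le K$ forces $\|\w(\cdot,t)\|_{H^6_{\textrm{per}}(0,T)}\le K+\|\phi\|_{H^6_{\textrm{per}}(0,T)}$, so all $x$-derivatives of $\w,\tilde{\w},\phi$ of order at most $5$, composed with the translation $x\mapsto x+\gamma(x,t)$ (which preserves $L^\infty$ norms of periodic functions), are bounded in $L^\infty(\R)$ by a constant depending only on $K$; likewise $\gamma_x(\cdot,t)\in H^4(\R)\hookrightarrow W^{3,\infty}(\R)$ and $\gamma_t(\cdot,t)\in H^3(\R)\hookrightarrow W^{2,\infty}(\R)$.

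For $\mathcal{R}_2(\mathring{\phi})$ I differentiate the displayed polynomial: by Leibniz each summand of $\partial_x^j[\,\cdot\,]$ is a product of at most three factors $\partial_x^{a}\mathring{\v}$ with $a\le j\le 3$ and at most one coefficient factor $\partial_x^{c}$ of a composition $\w(\cdot+\gamma),\phi(\cdot+\gamma)$ or $\tilde{\w}(\cdot+\gamma)$ with $c\le j$. Because $j\le 3$, at most one of the $\mathring{\v}$-factors has more than one derivative; I place that one in $L^2(\R)$, bounded by $\|\mathring{\v}(\cdot,t)\|_{H^j}$, and the remaining $\mathring{\v}$-factors — each a derivative of order at most one of $\mathring{\v}$, hence in $L^\infty$ with norm $\lesssim\|\mathring{\v}\|_{H^2}\le K$ — together with the coefficient factor in $L^\infty(\R)$; by Fa\`a di Bruno the coefficient factor involves only derivatives of $\w,\phi$ and of $\gamma$ of order at most $3$ and is therefore uniformly bounded. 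This yields $\|\partial_x^j\mathcal{R}_2(\mathring{\phi})(\tilde{\w}(\cdot+\gamma),\mathring{\v})\|_{L^2}\le C(K)\,\|\mathring{\v}(\cdot,t)\|_{H^j}$, the a priori bound $\|\mathring{\v}\|_{H^3}\le K$ being exactly what turns the estimate from cubic into linear in $\|\mathring{\v}\|_{H^j}$.

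The bound for $\mathcal{R}_5$ is of the same kind. Applying $\partial_x^j$ and Fa\`a di Bruno to a generic summand $\big(\partial_x^m\tilde{\w}\big)(\cdot+\gamma)\cdot g$ with $g\in\{\gamma_t,\gamma_{xx},\gamma_x,\gamma_x^2\}$ and $m\le 2$, one obtains a sum of terms in which $\partial_x^{c}$ of the composition — involving $\tilde{\w}$-derivatives of order at most $m+c\le m+j\le 5$ and $\gamma_x$-derivatives of order at most $c-1\le 2$, hence bounded in $L^\infty$ — multiplies $\partial_x^{b}g$ with $b+c=j$. The $L^2$-carrying factor is $\partial_x^b\gamma_t$ (bounded by $\|\gamma_t\|_{H^j}$) when $g=\gamma_t$; it is $\partial_x^b\gamma_{xx}=\partial_x^{b+1}\gamma_x$ (bounded by $\|\gamma_x\|_{H^{j+1}}$, and this is the summand forcing the regularity $H^{j+1}$ on $\gamma_x$ to be sharp) when $g=\gamma_{xx}$; and it is the highest-order derivative of $\gamma_x$ occurring, of order at most $j$ (bounded by $\|\gamma_x\|_{H^j}\le\|\gamma_x\|_{H^{j+1}}$), when $g=\gamma_x$ or $g=\gamma_x^2$, the second $\gamma_x$-factor in the $\gamma_x^2$ case being a derivative of order at most one, hence in $L^\infty$ with norm $\lesssim\|\gamma_x\|_{H^2}\le K$. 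The summands of $\mathcal{R}_5$ carrying $\phi',\phi''$ instead of $\tilde{\w}_x,\tilde{\w}_{xx}$ are treated identically and are easier, $\phi$ being smooth with bounded derivatives. Summing over all terms gives $\|\partial_x^j\mathcal{R}_5(\tilde{\w}(t),\gamma(t),\gamma_t(t))\|_{L^2}\le C(K)\big(\|\gamma_x(\cdot,t)\|_{H^{j+1}}+\|\gamma_t(\cdot,t)\|_{H^j}\big)$.

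The only genuinely delicate aspect is the bookkeeping: one must check that no factor ever requires $\w$ (equivalently $\tilde{\w}$) with more than five $x$-derivatives — which is precisely what the combination of $H^6_{\textrm{per}}(0,T)$-regularity and $j\le 3$ supplies, and which underlies the regularity demands of Theorem \ref{main_result} — and that for $\mathcal{R}_2(\mathring{\phi})$ the cubic-in-$\mathring{\v}$ structure together with $\|\mathring{\v}\|_{H^3}\le K$ indeed produces a bound linear in $\|\mathring{\v}\|_{H^j}$. No smallness of $\gamma_x$ enters these particular estimates; the $K$-bounds in the hypothesis, together with the regularity and a priori control furnished by Corollary \ref{cor_existence_forward} and Proposition \ref{prop_existence_modulation functions}, are all that is used.
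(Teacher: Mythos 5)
Your proposal is correct and follows essentially the same route as the paper: apply Leibniz/Fa\`a di Bruno to the explicit form of $\mathcal{R}_5$ and to the cubic structure of $\mathcal{R}_2(\mathring{\phi})$, place the highest-order factor in $L^2$ and the rest in $L^\infty$ via $H^6_{\textrm{per}}(0,T)\hookrightarrow W^{5,\infty}(\R)$, $H^4(\R)\hookrightarrow W^{3,\infty}(\R)$ and $H^1(\R)\hookrightarrow L^\infty(\R)$, using the a priori $K$-bounds. The paper's proof records only the representative $\tilde{\w}$-terms and calls the $\mathcal{R}_2$-bound straightforward; your write-up supplies exactly the omitted bookkeeping, so there is nothing to correct.
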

\begin{proof}
We bound
\begin{align*}
  &||\partial_x^j( \beta \mathcal{J}\left(\tilde{\w}_{xx}(\cdot +  \gamma(\cdot,t),t)(2\gamma_x(t)+ \gamma_x(t)^2) + \tilde{\w}_x(\cdot + \gamma(\cdot,t),t)\gamma_{xx}(t)\right) + \tilde{\w}_x(\cdot + \gamma(\cdot,t),t)\gamma_t(t))||_{L^2} \\
  &\quad \lesssim ||\tilde{\w}(t)||_{H^{j+3}_{\textrm{per}}(0,T)} ||\gamma_x(t)||_{H^{j+1}} + ||\tilde{\w}(t)||_{H^{j+2}_{\textrm{per}}(0,T)} ||\gamma_t(t)||_{H^j},
\end{align*}
$j = 1,2,3$, where we used the Sobolev embedding $H^k_{\textrm{per}}(0,T) \hookrightarrow W^{k-1,\infty}(\R)$.
Similarly, one proceeds for terms involving $\phi$ instead of $\tilde{\w}$. Also it is straightforward to check the bound  for $\mathcal{R}_2$. 
\end{proof}
Proceeding as in \cite{haragus,uniform, zumbrun_remark}, we are now in the position to derive a nonlinear damping estimate for $\mathring{\v}$.
\begin{proposition}
\label{proposition_damping}
  Let $\gamma$, $\sigma$ and $t_{\textrm{max},\sigma}$, $t_{\textrm{max},\gamma}$ as in  Propositions \ref{local_existence_sigma} and  \ref{prop_existence_modulation functions}. Let $\w$ and $T_{\textrm{max}}$ as in Proposition \ref{prop_existence_w}. Take $\v$, $\v_0$ and $\tau_{\textrm{max}}$ as in Proposition \ref{prop_existence_unmodulated_v}. Define $\mathring{\v}$ through (\ref{def_forward_v}) and set $\tilde{\w}(t) = \w(t)- \phi$. Fix $K>0$. There exists a constant $C>0$ such that
  \begin{align}
  \begin{split}
  \label{eq_damping}
         ||\mathring{\v}(t)||^2_{H^3} \leq C \Bigl(e^{-t} ||\v_0||^2_{H^3}+ ||\mathring{\v}(t)||^2_{L^2} + \int_0^t e^{-(t-s)} \left(||\mathring{\v}(s)||_{L^2}^2 +  ||\gamma_x(s)||^2_{H^{4}} + ||\gamma_t(s)||^2_{H^{3}}\right) \,ds\Bigr)
  \end{split}
  \end{align}
  for all $t \in [0,t_{\textrm{max},\gamma})$ provided
  \begin{align}
  \label{preliminaries_boundind_damping}
      \sup_{0 \leq s\leq t} \left(  ||\tilde{\w}(s)||_{H^{6}_{\textrm{per}}(0,T)} + ||\mathring{\v}(s)||_{H^3} +  ||(\gamma_x(s),\gamma_s(s))||_{H^{3}\times H^2} \right) \leq K.
  \end{align}
\end{proposition}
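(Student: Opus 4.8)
The plan is to prove the nonlinear damping estimate \eqref{eq_damping} by a standard energy argument applied to the semilinear equation \eqref{forward_equation} satisfied by $\mathring{\v}$. The key structural feature to exploit is that the top-order part of $\mathcal{L}_0(\mathring{\phi})$ is $-\beta\mathcal{J}\partial_x^2 - \mathcal{I}$, whose skew-adjoint dispersive piece $-\beta\mathcal{J}\partial_x^2$ contributes nothing to the $L^2$-energy while the damping term $-\mathcal{I}$ contributes a coercive $-\|\cdot\|^2$ on every derivative level; all remaining terms in $\mathcal{L}_0(\mathring{\phi})$ are of lower order (in particular, the coefficients $\mathring{\phi}_i$ and their derivatives are bounded, using $\gamma\in H^5\hookrightarrow W^{4,\infty}$ and $\phi$ smooth). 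This is exactly the mechanism used in \cite{zumbrun_remark, haragus, uniform}.

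First I would fix $j\in\{1,2,3\}$, apply $\partial_x^j$ to \eqref{forward_equation}, take the real $L^2$-inner product with $\partial_x^j\mathring{\v}$, and sum over $j$ after weighting. Concretely, set $\mathcal{E}(t) = \sum_{j=0}^{3}\lambda_j\|\partial_x^j\mathring{\v}(t)\|_{L^2}^2$ for suitable constants $\lambda_0\gg\lambda_1\gg\lambda_2\gg\lambda_3>0$ to be chosen. Differentiating in $t$, the $-\mathcal{I}$ term gives $-2\mathcal{E}(t)$; the dispersive term $-\beta\langle\partial_x^j\mathcal{J}\partial_x^2\mathring{\v},\partial_x^j\mathring{\v}\rangle$ vanishes because $\mathcal{J}$ is skew-symmetric and integration by parts moves one derivative, leaving $\langle\mathcal{J}\partial_x^{j+1}\mathring{\v},\partial_x^{j+1}\mathring{\v}\rangle=0$. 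The commutator of $\partial_x^j$ with the zeroth-order matrix multiplier in $\mathcal{L}_0(\mathring{\phi})$ produces terms involving at most $j$ derivatives of $\mathring{\v}$ with bounded coefficients, hence is controlled by $C\mathcal{E}(t)^{1/2}(\|\mathring{\v}\|_{H^{j}})\le C(\varepsilon\|\partial_x^j\mathring{\v}\|_{L^2}^2 + C_\varepsilon\|\mathring{\v}\|_{H^{j-1}}^2)$ by Young's inequality; the $\varepsilon$-part is absorbed into the damping on level $j$, the remainder into lower levels with larger weights. The forcing terms are bounded by Lemma \ref{lemma_bounds_for_damping}: $\langle\partial_x^j\mathcal{R}_5,\partial_x^j\mathring{\v}\rangle$ and $\langle\partial_x^j\mathcal{R}_2(\mathring{\phi})(\tilde{\w}(\cdot+\gamma),\mathring{\v}),\partial_x^j\mathring{\v}\rangle$ are each estimated, via Cauchy--Schwarz and Young, by $\varepsilon\|\partial_x^j\mathring{\v}\|_{L^2}^2 + C_\varepsilon(\|\gamma_x\|_{H^{j+1}}^2+\|\gamma_t\|_{H^j}^2) + C_\varepsilon\|\mathring{\v}\|_{H^j}^2$, and the last term again needs care: the $\partial_x^j\mathring{\v}$-part is $\varepsilon$-absorbed, the rest falls to lower levels. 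Choosing the weights $\lambda_j$ in descending order so that each level's leftover is dominated by the damping of the next-lower (larger-weight) level, and taking $\varepsilon$ small, yields a Gr\"onwall inequality $\frac{d}{dt}\mathcal{E}(t)\le -\mathcal{E}(t) + C(\|\mathring{\v}(t)\|_{L^2}^2 + \|\gamma_x(t)\|_{H^4}^2 + \|\gamma_t(t)\|_{H^3}^2)$, using $j\le 3$ so $j+1\le 4$. Integrating this differential inequality and using $\mathcal{E}(0)\simeq\|\v_0\|_{H^3}^2$ together with $\|\mathring{\v}\|_{H^3}^2\simeq\mathcal{E}$ gives \eqref{eq_damping}.

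The main obstacle, and the only point requiring genuine care rather than bookkeeping, is arranging the hierarchy of weights $\lambda_j$ so that the energy method actually closes: each time one integrates by parts or commutes $\partial_x^j$ past a variable coefficient, one generates terms with up to $j$ derivatives on $\mathring{\v}$ that are \emph{not} gained back by the damping at level $j$ alone (the damping only controls $\|\partial_x^j\mathring{\v}\|_{L^2}$, not the full $\|\mathring{\v}\|_{H^j}$); the standard fix is the Kawashima-type weighted sum where $\lambda_{j-1}/\lambda_j$ is taken large enough to absorb these into the previous level, bottoming out at the $L^2$-level whose leftover $\|\mathring{\v}\|_{L^2}^2$ is simply kept as a source term on the right-hand side (which is why it appears in \eqref{eq_damping}). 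A secondary technical point is justifying the energy identities rigorously: since $\mathring{\v}\in C([0,t_{\textrm{max},\gamma});H^3)\cap C^1(\,\cdot\,;H^1)$ by Corollary \ref{cor_existence_forward}, one works with a Friedrichs mollification (or a Galerkin/parabolic-regularization argument) to validate the computation at the top level $j=3$ and then passes to the limit; I would mention this but not belabor it, as it is routine and identical to \cite[proof of the damping estimate]{zumbrun_remark}. The smallness assumption \eqref{preliminaries_boundind_damping} enters precisely to ensure all variable coefficients ($\mathring{\phi}$ and its derivatives, and the implicit $(1-\gamma_x)^{-1}$-type factors hidden in $\mathcal{R}_2(\mathring{\phi})$ and $\mathcal{R}_5$) stay in a bounded set, so that the constant $C$ is uniform.
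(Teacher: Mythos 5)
Your overall energy-method skeleton (differentiate, pair with $\partial_x^j\mathring{\v}$, use the skew-symmetry of $\mathcal{J}$ for the principal part, use $-\mathcal{I}$ for coercivity, bound the forcing via Lemma \ref{lemma_bounds_for_damping}, integrate a Gr\"onwall inequality) is the right family of argument, and your remark on regularization/approximation matches what is actually needed. However, there is a genuine gap at the heart of the scheme: the zeroth-order part of $\mathcal{L}_0(\mathring{\phi})$ is $\mathcal{J}V(\mathring{\phi})$ with $V(\mathring{\phi})=-\alpha I+W(\mathring{\phi})$, $W$ symmetric and quadratic in $\mathring{\phi}$, and after applying $\partial_x^j$ and pairing with $\partial_x^j\mathring{\v}$ its \emph{leading} (non-commutator) contribution is $\langle \mathcal{J}W(\mathring{\phi})\partial_x^j\mathring{\v},\partial_x^j\mathring{\v}\rangle$. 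The $-\alpha\mathcal{J}$ piece vanishes by skew-symmetry, but $\mathcal{J}W$ is not skew-symmetric: its symmetric part is exactly $\tfrac12 M(\mathring{\phi})$ with $M$ built from $\mathring{\phi}_r\mathring{\phi}_i$ and $\mathring{\phi}_r^2-\mathring{\phi}_i^2$, so this term is of size $\|M(\mathring{\phi})\|_{L^\infty}\|\partial_x^j\mathring{\v}\|_{L^2}^2$, i.e.\ it sits at the \emph{same} derivative level $j$ with an $O(1)$ coefficient determined by the wave amplitude, which is not assumed small. The only coercivity available at level $j$ is the fixed-strength damping $-\|\partial_x^j\mathring{\v}\|_{L^2}^2$ coming from $-\mathcal{I}$, since the dispersive principal part contributes nothing (unlike the parabolic case, where $-\|\partial_x^{j+1}\cdot\|^2$ dominates all lower-order terms by interpolation). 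Consequently your Kawashima-type weight hierarchy cannot absorb it: the weights $\lambda_j$ only handle terms with strictly fewer derivatives, and your own absorption step ($C\mathcal{E}^{1/2}\|\mathring{\v}\|_{H^j}\le \varepsilon\|\partial_x^j\mathring{\v}\|^2+C_\varepsilon\|\mathring{\v}\|_{H^{j-1}}^2$) silently discards the level-$j$ part with a large constant. For general waves satisfying \ref{assH1}, \ref{assD1}--\ref{assD3} the resulting differential inequality has no negative rate, and the claimed Gr\"onwall estimate does not follow.

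This is precisely why the paper (following \cite{zumbrun_remark,haragus,uniform}) does not use $\sum_j\lambda_j\|\partial_x^j\mathring{\v}\|^2$ but the modified energies $E_j(t)=\|\partial_x^j\mathring{\v}\|_{L^2}^2-\frac{1}{2\beta}\langle\mathcal{J}M(\mathring{\phi})\partial_x^{j-1}\mathring{\v},\partial_x^{j-1}\mathring{\v}\rangle$: when the cross term is differentiated in time, the dispersive operator $-\beta\mathcal{J}\partial_x^2$ produces a contribution that cancels $\langle \tfrac12 M(\mathring{\phi})\partial_x^j\mathring{\v},\partial_x^j\mathring{\v}\rangle$ up to lower-order terms, so that one genuinely obtains $\partial_t E_j\le -E_j+C(\|\mathring{\v}\|_{L^2}^2+\|\gamma_x\|_{H^{j+1}}^2+\|\gamma_t\|_{H^j}^2)$, and then recovers $\|\partial_x^j\mathring{\v}\|_{L^2}^2\le 2E_j+C\|\mathring{\v}\|_{L^2}^2$ by interpolation and Young. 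In other words, the dispersive term must be actively exploited as a compensator rather than merely observed to vanish; without the $\mathcal{J}M(\mathring{\phi})$ correction (or an amplitude-smallness assumption that the proposition does not make), your scheme does not close.
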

\begin{proof}First we use that $\v_0 \in H^5(\R)$ gives a solution $\v \in C([0,\tau_{\textrm{max}})); H^5(\R))\cap  C^1([0,\tau_{\textrm{max}}); H^3(\R))$ 
 of (\ref{LLE_perturbed}) arguing analogously as in Proposition \ref{prop_existence_unmodulated_v}. Since $H^5(\R)$ is dense in $H^3(\R)$ an approximation argument as in \cite[Proposition 4.3.7]{cazenave} yields the result for $\v_0 \in H^3(\R)$.

Fix $K>0$. Let $t \in [0,t_{\textrm{max},
\gamma})$ such that (\ref{preliminaries_boundind_damping}) holds.   The forward-modulated perturbation $\mathring{\v}$ is designed such that the principal part in (\ref{forward_equation}) is given by $\partial_t - \mathcal{L}_0(\mathring{\phi})$. This is the reason why we choose the same energies as introduced in \cite{haragus}, \cite{uniform} and \cite{zumbrun_remark}, that is,
    \begin{align*}
        E_j(t) = ||\partial_x^j \mathring{\v}(t)||_{L^2}^2 - \frac{1}{2\beta} \langle \mathcal{J} M(\mathring{\phi})\partial_x^{j-1}\mathring{\v}, \partial_x^{j-1}\mathring{\v}\rangle, \quad j = 1,2,3,
    \end{align*}
    with 
    \begin{align*}
        M(\mathring{\phi}) = 2\begin{pmatrix}
            -2\mathring{\phi}_r \mathring{\phi}_i & \mathring{\phi}_r^2 - \mathring{\phi}_i^2 \\
            \mathring{\phi}_r^2 - \mathring{\phi}_i^2 & 2\mathring{\phi}_r \mathring{\phi}_i
        \end{pmatrix}.
    \end{align*}
    We compute, see \cite{zumbrun_remark} and \cite{uniform}, 
    \begin{align*}
        \partial_t E_j(t) = - 2E_j(t) + R_1(t) + R_2(t)
    \end{align*}
    with 
       \begin{align*}
        |R_1(t)| \leq \frac{2}{3} E_j(t) + C_1||\mathring{\v}(t)||_{L^2}^2
    \end{align*}
    for some $t$-independent constant $C_1>0$ and
    \begin{align*}
        R_2(t) = 2 \Re \, \langle \partial_x^j \mathcal{R}_5(\hat{\w}(t), \gamma(t),\gamma_t(t)), \partial_x^j \mathring{\v}(t) \rangle_{L^2} - \frac{1}{\beta} \Re \,\langle \mathcal{J}  M(\mathring{\phi})\partial_x^{j-1} \mathcal{R}_5(\hat{\w}(t), \gamma(t),\gamma_t(t)), \partial_x^{j-1} \mathring{\v}(t) \rangle_{L^2}.
    \end{align*}
    By Lemma \ref{lemma_bounds_for_damping}, using (\ref{preliminaries_boundind_damping}), interpolation and Young's inequality, there exists a $t$-independent constant $C_2>0$ such that
    \begin{align*}
        |R_2(t)| \leq \frac{1}{3} E_j(t) + C_2\left(||\mathring{\v}(t)||_{L^2}^2 +  ||\gamma_x(t)||^2_{H^{j+1}} + ||\gamma_t(t)||^2_{H^{j}}\right).
    \end{align*}
    We conclude \begin{align*}
        \partial_t E_j(t) \leq -E_j(t) + C_3\left(||\mathring{\v}(t)||_{L^2}^2 +  ||\gamma_x(t)||^2_{H^{j+1}} + ||\gamma_t(t)||^2_{H^{j}}\right)
    \end{align*}
    for some $t$-independent constant $C_3>0$ and $j = 1,2,3$. Integrating the latter and using, for some $t$-independent constant $C_4>0$,
    \begin{align*}
      ||\partial_x^j \mathring{\v}(t)||_{L^2}^2 \leq 2E_j(t) + C_4||\mathring{\v}(t)||_{L^2}^2,
    \end{align*}
    $j = 1,2,3$, which follow by interpolation and Young's inequality, we arrive at (\ref{eq_damping}).
    \end{proof}
\begin{lemma}
\label{lemma_relate}
  Fix $K>0$. Let the assumptions be as in the previous corollary. There exists $C>0$ such that for all $t \in[0,t_{\textrm{max},\gamma})$ it holds
    \begin{align}
    \label{relate_h2}
        ||\mathring{\v}(t)||_{L^2} \leq C \left( ||\hat{\v}(t)||_{L^2} + ||\gamma_x(t)||_{L^2} \right),
        \end{align}
        \begin{align}
        \label{relate_l2}
||\hat{\v}(t)||_{H^3} \leq C \left( ||\mathring{\v}(t)||_{H^3} +||\gamma_x(t)||_{H^{3}} \right)
    \end{align}
    and
    \begin{align}
    \label{relate_l_infty}
            || \mathring{\v}(t)||_{L^\infty} 
    \leq C\left( ||\hat{\v}(t)||_{L^\infty} + ||\gamma_x(t)||_{L^\infty} \right),
    \end{align}
    provided
    \begin{align*}
         \sup_{0\leq s \leq t} \left( ||\hat{\w}(s)||_{H^5_{\textrm{per}}} + ||\gamma(s)||_{H^4} + |\sigma
         (s)|\right)\leq K  \textrm{ and } \sup_{0\leq s\leq t}||\gamma_x(s)||_{L^\infty} \leq \frac{1}{2}.
    \end{align*}
\end{lemma}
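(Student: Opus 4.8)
The plan is to express $\mathring{\v}$ and $\hat{\v}$ in terms of each other through the near-identity diffeomorphism $\Psi_t\colon\R\to\R$, $\Psi_t(x) := x-\sigma(t)-\gamma(x,t)$. Since $\sup_{0\le s\le t}\|\gamma_x(s)\|_{L^\infty}\le\tfrac12$, the map $\Psi_t$ is a strictly increasing bijection with $\Psi_t'=1-\gamma_x(\cdot,t)\in[\tfrac12,\tfrac32]$, and its inverse satisfies $(\Psi_t^{-1})'\in[\tfrac23,2]$ together with $\Psi_t^{-1}(x)-x=\sigma(t)+\gamma(\Psi_t^{-1}(x),t)$, the latter bounded in modulus by $|\sigma(t)|+\|\gamma(t)\|_{L^\infty}$, which is $\lesssim 1$ by the hypotheses. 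Evaluating the definition (\ref{define_inverse_v}) of $\hat{\v}$ at $\Psi_t^{-1}(x)$, inserting (\ref{define_inverse_w}) and recalling $\mathring{\v}(x,t)=\u(x,t)-\w(x+\gamma(x,t),t)$ from (\ref{def_forward_v}), a short computation gives
\begin{align*}
\mathring{\v}(x,t)=\hat{\v}(\Psi_t^{-1}(x),t)+\w\big(x+\gamma(\Psi_t^{-1}(x),t),t\big)-\w\big(x+\gamma(x,t),t\big),
\end{align*}
and, after the substitution $x=\Psi_t(z)$ and using $\Psi_t(z)+\gamma(z,t)=z-\sigma(t)$, the inverse relation
\begin{align*}
\hat{\v}(z,t)=\mathring{\v}(\Psi_t(z),t)+\w\big(\Psi_t(z)+\gamma(\Psi_t(z),t),t\big)-\w\big(z-\sigma(t),t\big).
\end{align*}
The structural point is that each $\w$-remainder is a \emph{double difference}: it is the difference of $\w$ at two arguments which differ by a difference of $\gamma$ at two points, and those two points differ by the bounded quantity $\Psi_t^{-1}(x)-x$ (resp.\ $\Psi_t(z)-z$). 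Writing both differences via the fundamental theorem of calculus, every remainder thus carries an explicit factor of $\gamma_x$, not merely of $\gamma$; this is what is needed, since in (\ref{l2_results}) the quantity $\gamma_x$ decays while $\gamma$ does not.

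For (\ref{relate_h2}) and (\ref{relate_l_infty}) I would work from the first identity. By the hypothesis on $\|\hat{\w}(t)\|_{H^5_{\mathrm{per}}}$ one has $\w(t)\in H^5_{\mathrm{per}}(0,T)\hookrightarrow W^{4,\infty}(\R)$ with $\|\w(t)\|_{W^{4,\infty}}$ bounded by a constant depending only on $K$; hence
\begin{align*}
\big|\w(x+\gamma(\Psi_t^{-1}(x),t),t)-\w(x+\gamma(x,t),t)\big|\lesssim\Big|(\Psi_t^{-1}(x)-x)\int_0^1\gamma_x\big((1-\theta)x+\theta\Psi_t^{-1}(x),t\big)\,d\theta\Big|,
\end{align*}
with $|\Psi_t^{-1}(x)-x|\lesssim1$. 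Taking $L^\infty$-norms and using that $\Psi_t^{-1}$ is a bijection gives (\ref{relate_l_infty}). For (\ref{relate_h2}) I would apply Minkowski's integral inequality in $\theta$ and, for each fixed $\theta$, the change of variables $u=(1-\theta)x+\theta\Psi_t^{-1}(x)$, whose Jacobian is pinched in $[\tfrac23,2]$; this bounds the $L^2$-norm of the remainder by $\lesssim\|\gamma_x(t)\|_{L^2}$, while $\|\hat{\v}(\Psi_t^{-1}(\cdot),t)\|_{L^2}\lesssim\|\hat{\v}(t)\|_{L^2}$ follows from the same change of variables.

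For (\ref{relate_l2}) I would use the inverse relation and estimate its two contributions separately. First, $\|\mathring{\v}(\Psi_t(\cdot),t)\|_{H^3}\lesssim\|\mathring{\v}(t)\|_{H^3}$: by the Fa\`a di Bruno formula $\partial_z^j[\mathring{\v}(\Psi_t(z),t)]$, $j\le3$, is a linear combination of $(\partial_x^m\mathring{\v})(\Psi_t(z),t)$, $m\le j$, times products of derivatives of $\Psi_t$, and the latter are bounded in $L^\infty$ by a constant depending only on $K$ since $\Psi_t'=1-\gamma_x(\cdot,t)$ and $\partial_z^k\Psi_t=-\partial_x^{k-1}\gamma_x(\cdot,t)$ for $k\ge2$, with $\|\gamma_x(t)\|_{W^{2,\infty}}\lesssim\|\gamma(t)\|_{H^4}\le K$; composing back with $\Psi_t^{-1}$ inside the $L^2$-integral returns $\lesssim\|\partial_x^m\mathring{\v}(t)\|_{L^2}$. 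Second, the remainder $\w(\Psi_t(z)+\gamma(\Psi_t(z),t),t)-\w(z-\sigma(t),t)$ equals $\rho(z,t)\int_0^1\w_x(z-\sigma(t)+\theta\rho(z,t),t)\,d\theta$ with $\rho(z,t):=\gamma(\Psi_t(z),t)-\gamma(z,t)$; using $\w(t)\in W^{4,\infty}(\R)$, $\gamma(t)\in H^4(\R)\hookrightarrow W^{3,\infty}(\R)$ and the change-of-variables bounds above, one checks by Fa\`a di Bruno and Moser-type product estimates that $\rho(t)$ is simultaneously bounded in $W^{3,\infty}(\R)$ by a constant depending only on $K$ and in $H^3(\R)$ by $\lesssim\|\gamma_x(t)\|_{H^3}$, and that the $\w_x$-superposition is bounded in $W^{3,\infty}(\R)$ by a constant depending only on $K$. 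The product rule then yields $\|\w(\Psi_t(\cdot)+\gamma(\Psi_t(\cdot),t),t)-\w(\cdot-\sigma(t),t)\|_{H^3}\lesssim\|\gamma_x(t)\|_{H^3}$, whence (\ref{relate_l2}).

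I expect the main obstacle to be the bookkeeping in (\ref{relate_l2}): one must organise the Fa\`a di Bruno and Moser expansions so that, on the one hand, compositions with $\Psi_t$ and $\Psi_t^{-1}$ are seen to preserve $H^k$-norms up to constants depending only on $K$ --- which rests on the Jacobians being pinched in $[\tfrac23,2]$ --- and, on the other hand, every term produced from the $\w$-remainder genuinely retains a factor $\partial_x^m\gamma_x$ so that the final bound is linear in $\|\gamma_x(t)\|_{H^3}$ rather than in $\|\gamma(t)\|_{H^4}$. Both are tight in the available regularity: pushing through three spatial derivatives requires precisely $\w(t)\in W^{4,\infty}(\R)$ and $\gamma_x(t)\in H^3(\R)$, which is exactly what the hypotheses of the lemma and Proposition \ref{prop_existence_modulation functions} supply.
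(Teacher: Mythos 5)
Your proposal is correct and follows essentially the same route as the paper: introduce the near-identity map $\Psi_t(x)=x-\sigma(t)-\gamma(x,t)$ (the paper's $A_t$), observe that $\hat{\v}\circ\Psi_t^{-1}$ and $\mathring{\v}$ differ by a double difference of $\w$ whose inner increment $\gamma(\Psi_t^{-1}(x),t)-\gamma(x,t)$ carries an explicit factor of $\gamma_x$ via the fundamental theorem of calculus, then close the estimates by the boundedness of the Jacobians and the $W^{4,\infty}$-control on $\w$ supplied by $\hat{\w}\in H^5_{\mathrm{per}}$. The only cosmetic difference is that you keep the remainder as a single difference of $\w$, whereas the paper splits it into $\hat{\w}$- and $\phi$-contributions and delegates the Fa\`a di Bruno/Moser bookkeeping for the $H^3$ estimate to cited lemmas of \cite{inv_paper}, \cite{zumbrun_remark} and \cite{uniform}, which you instead carry out in place.
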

\begin{proof}
  Let $t \in[0,t_{\textrm{max},\gamma})$.  
  We write $A_t(x) = x- \gamma(x,t) - \sigma(t)$ and  $B_t(x) = x+ \gamma(x,t) + \sigma(t)$ . By the inverse function theorem and the fact that $||\gamma
  _x(t)||_{L^\infty} \leq \frac{1}{2}$, it is easy to check that $A_t$ is invertible with
    \begin{align*}
      x =   A_t(A_t^{-1}(x)) = A_t^{-1}(x) - \gamma(A_t^{-1}(x),t) - \sigma(t)
    \end{align*}
    and therefore
    \begin{align*}
        A_t^{-1}(x) - B_t(x) &= \gamma(A_t^{-1}(x),t) - \gamma(x,t).
    \end{align*}
    We also compute
    \begin{align}
    \label{inverse_derivatives}
    \begin{split}
  & \partial_x(A_t^{-1}(x)) = \frac{1}{1-\gamma_x(A_t^{-1}(x),t)},  \quad \partial_x^2(A_t^{-1}(x)) = \frac{\gamma_{xx}(A_t^{-1}(x),t)}{(1-\gamma_x(A_t^{-1}(x),t))^3}, \\
   & \quad \partial_x^3(A_t^{-1}(x)) = \frac{\gamma_{xxx}(A_t^{-1}(x),t)}{(1-\gamma_x(A_t^{-1}(x),t))^4} + \frac{3\gamma_{xx}(A_t^{-1}(x),t)^2}{(1-\gamma_x(A_t^{-1}(x),t))^5}.
         \end{split}
    \end{align}
Using \begin{align}
\label{intermediate_integral}
   \gamma(A_t^{-1}(x),t) - \gamma(x,t) =  (A_t^{-1}(x)-x) \int_0^1 \gamma_x(x+ \theta (A_t^{-1}(x)-x), t) \,d\theta,
\end{align}
(\ref{inverse_derivatives}) and $||\gamma_x(t)||_{L^\infty} \leq \frac{1}{2}$,  we can estimate
\begin{align}
\label{relation_key_estimate}
     ||A_t^{-1}- B_t||_{L^2}  \lesssim ||\gamma_x||_{L^2}, \quad ||A_t^{-1}- B_t||_{H^l} \lesssim ||\gamma_x||_{H^l},
\end{align}
for $l = 1,2,3$. This is shown in \cite[ Lemma 2.7]{inv_paper} and \cite[Corollary 5.3]{zumbrun_remark}. 
It additionally follows
\begin{align*}
    ||\hat{\v}(A_t^{-1}(\cdot),t) - \hat{\v}(\cdot,t)||_{H^3} \lesssim ||\hat{\v}(t)||_{H^3}, \quad
    ||\mathring{\v}(A_t(\cdot),t) - \mathring{\v}(\cdot,t)||_{L^2} \lesssim ||\mathring{\v}(t)||_{L^2}
\end{align*}
as  for \cite[(3.24)\&(3.25)]{uniform}.
We observe
\begin{align*}
    \hat{\v}(A_t^{-1}(x),t) - \mathring{\v}(x,t) = (\hat{\w}(B_t(x),t) -\hat{\w}(A_t^{-1}(x),t)) + (\phi(A_t^{-1}(x)) - \phi(B_t(x)))
\end{align*}
and estimate with (\ref{relation_key_estimate}) and the mean value theorem,
\begin{align*}
    ||\hat{\v}(A_t^{-1}(\cdot),t) - \mathring{\v}(\cdot,t)||_{H^3} 
    \lesssim ||A_t^{-1} - B_t ||_{H^3} \lesssim ||\gamma_x||_{H^{3}}.
\end{align*}
On the other hand,
\begin{align*}
    \hat{\v}(x,t) - \mathring{\v}(A_t(x),t) = -\hat{\w}(x,t) + \hat{\w}(x+ \gamma(A_t(x),t)-\gamma(x,t),t) + (-\phi(x) + \phi(x+ \gamma(A_t(x),t)-\gamma(x,t)))
\end{align*}
and by and the mean value theorem and with 
\begin{align*}
    ||\gamma(A_t(\cdot),t) - \gamma(\cdot,t)||_{L^2} \lesssim ||\gamma_x(t)||_{L^2},
\end{align*}
which follows by (\ref{intermediate_integral}), 
we obtain
\begin{align*}
    ||\hat{\v}(\cdot,t) - \mathring{\v}(A_t(\cdot),t)||_{L^2} \lesssim ||\gamma_x(t)||_{L^2}.
\end{align*}
Putting everything together, we arrive at (\ref{relate_h2}) and (\ref{relate_l2}). The relation (\ref{relate_l_infty}) follows analogously.
\end{proof}

\section{Nonlinear stability analysis}
\label{section_nonlinear}
\subsection{The nonlinear iteration}
\label{section_iteration}
We first establish nonlinear stability of $\w$.
\begin{theorem}
\label{theorem_purely_periodic}
    Let $\w_0 \in H^6_{\textrm{per}}(0,T)$. Let $\w$, $T_{\textrm{max}}$ be as in  Proposition \ref{prop_existence_w} and let  $\sigma$ and $t_{\textrm{max},\sigma}$ as in Proposition \ref{local_existence_sigma}. There exist $C ,\delta_2, \varepsilon_p>0$ such that for \begin{align*}
        E_p := ||\w_0||_{H^6_{\textrm{per}}(0,T)}< \varepsilon_p , 
    \end{align*}
  the functions $\w(t)$ and $\sigma(t)$ exist globally, i.e. $t_{\textrm{max},\sigma} = T_{\textrm{max}} = \infty$, and 
    \begin{align*}
        |\sigma(t)|,||\w(t) - \phi||_{H^6_{\textrm{per}}(0,T)} \leq CE_p, \quad |\sigma_t(t)|, ||\hat{\w}(t)||_{H^6_{\textrm{per}}(0,T)}, ||\mathring{\w}(t)||_{H^6_{\textrm{per}}(0,T)} \leq C e^{-\delta_2 t} E_p,
    \end{align*}
    for all $t\geq 0$.
\end{theorem}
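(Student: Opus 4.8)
The plan is to run a standard continuous-induction (bootstrap) argument on the template function
$$
\eta(t) = \sup_{0 \le s \le t} e^{\delta_2 s}\Big( |\sigma_s(s)| + \|\mathring{\w}(s)\|_{H^6_{\textrm{per}}(0,T)} \Big),
$$
using the semilinear Duhamel formula \eqref{semilinear_w} for $\mathring{\w}$, the definition \eqref{choice_of_sigma} of $\sigma$, the exponential linear estimate of Proposition \ref{linear_exp}, and the nonlinear bounds of Lemma \ref{lemma_nonlinear_bounds_w}. First I would fix $\delta_2 \in (0,\delta_0)$ with $\delta_0$ from Proposition \ref{linear_exp}, and pick $K$ (from Lemma \ref{lemma_nonlinear_bounds_w}) and $\varepsilon_p$ small enough that the a priori bound $\eta(t) \le C_* E_p$ with $C_* E_p \le K/2$ and $\le 1/4$ keeps us inside the region where Propositions \ref{prop_existence_w}, \ref{local_existence_sigma} apply and the nonlinear estimates are valid. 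On the maximal interval $[0, t_{\textrm{max},\sigma})$ the quantity $\eta$ is finite and continuous, $\eta(0) \lesssim E_p$, so it suffices to show the implication $\eta(t) \le 2C_* E_p \ \Rightarrow\ \eta(t) \le C_* E_p$ for all $t$ in the interval, and then conclude by continuity that $\eta(t) \le C_* E_p$ globally, which by the blow-up criteria \eqref{blowup_crit_w} and Proposition \ref{local_existence_sigma} forces $t_{\textrm{max},\sigma} = T_{\textrm{max}} = \infty$.

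The core estimate is to insert the a priori hypothesis into \eqref{semilinear_w}. Applying Proposition \ref{linear_exp} to the linear term gives $\|\tilde S_1(t)\tilde{\w}_0\|_{H^6_{\textrm{per}}} \le C e^{-\delta_0 t} E_p$. For the Duhamel integral, Lemma \ref{lemma_nonlinear_bounds_w} bounds the integrand in $H^6_{\textrm{per}}(0,T)$ by
$$
C\Big( \|\mathring{\w}(s)\|_{H^6_{\textrm{per}}}\big(\|\mathring{\w}(s)\|_{H^6_{\textrm{per}}} + |\sigma(s)|\big) + |\sigma(s)|\,|\sigma_s(s)| \Big),
$$
and under the a priori bound each factor $\|\mathring{\w}(s)\|_{H^6_{\textrm{per}}}$, $|\sigma_s(s)|$ is $\le 2C_* E_p\, e^{-\delta_2 s}$ while $|\sigma(s)| \le \int_0^s |\sigma_r(r)|\,dr \le 2C_* E_p/\delta_2$ is merely bounded; hence the integrand is $\lesssim E_p^2\, e^{-\delta_2 s}$. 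Convolving with $C e^{-\delta_0(t-s)}$ and using $\delta_2 < \delta_0$ yields $\int_0^t e^{-\delta_0(t-s)} e^{-\delta_2 s}\,ds \lesssim e^{-\delta_2 t}$, so $\|\mathring{\w}(t)\|_{H^6_{\textrm{per}}} \lesssim (E_p + E_p^2) e^{-\delta_2 t}$. The same computation applied to \eqref{choice_of_sigma}, using that $\chi$ is bounded and $\Pi(0)$ is a bounded projection on $H^6_{\textrm{per}}(0,T)$ (indeed $\Pi(0)$ maps into $\textrm{span}\{\phi'\}$, so it lands in a fixed finite-dimensional space and all norms are equivalent there), gives the bound on $|\sigma_t(t)|$ by differentiating \eqref{choice_of_sigma} and using $\chi' $ bounded with support in $[1,2]$; one gets $|\sigma_t(t)| \lesssim (E_p + E_p^2) e^{-\delta_2 t}$ as well. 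Choosing $\varepsilon_p$ small enough that $C(E_p + E_p^2) \le \frac{1}{2} C_* E_p$ closes the bootstrap, i.e.\ $\eta(t) \le C_* E_p$.

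It remains to convert the closed bootstrap bounds into the three claimed estimates. The bounds $|\sigma_t(t)|, \|\mathring{\w}(t)\|_{H^6_{\textrm{per}}} \le C e^{-\delta_2 t} E_p$ are immediate from $\eta(t) \le C_* E_p$; integrating $|\sigma_t|$ gives $|\sigma(t)| \le C E_p$ with no decay. For $\hat{\w}(t) = \w(\cdot-\sigma(t)) - \phi$, I would write $\hat{\w}(t) = \big(\mathring{\w}(\cdot-\sigma(t),t)\big) + \big(\phi(\cdot) - \phi(\cdot-\sigma(t))\big) + \big(\phi(\cdot-\sigma(t)) - \phi(\cdot-\sigma(t))\big)$ — more precisely, since $\mathring{\w}(x,t) = \w(x,t)-\phi(x+\sigma(t))$ we have $\w(x-\sigma(t),t) = \mathring{\w}(x-\sigma(t),t) + \phi(x)$, so $\hat{\w}(x,t) = \mathring{\w}(x-\sigma(t),t)$ exactly; translation invariance of the $H^6_{\textrm{per}}(0,T)$-norm then gives $\|\hat{\w}(t)\|_{H^6_{\textrm{per}}} = \|\mathring{\w}(t)\|_{H^6_{\textrm{per}}} \le C e^{-\delta_2 t} E_p$. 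Finally $\|\w(t)-\phi\|_{H^6_{\textrm{per}}} \le \|\hat{\w}(t)\|_{H^6_{\textrm{per}}} + \|\phi(\cdot-\sigma(t)) - \phi\|_{H^6_{\textrm{per}}} \lesssim e^{-\delta_2 t}E_p + |\sigma(t)| \lesssim E_p$ using smoothness and periodicity of $\phi$. I expect the main obstacle to be bookkeeping the interplay between the \emph{non-decaying} quantity $|\sigma(t)|$ and the \emph{decaying} quantities in the quadratic nonlinearity: one must be careful that every occurrence of $|\sigma(s)|$ in Lemma \ref{lemma_nonlinear_bounds_w} is multiplied by a decaying factor ($\|\mathring{\w}\|_{H^6_{\textrm{per}}}$ or $|\sigma_s|$), which is exactly the structure displayed there, so the argument does close — but this is the step that genuinely uses the co-periodic orbital-stability mechanism of \cite{LLE_periodic} rather than a naive Gronwall estimate.
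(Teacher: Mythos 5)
Your proposal is correct and follows essentially the same route as the paper: the paper's proof invokes the standard exponential bootstrap on the Duhamel formula (\ref{semilinear_w}) using Proposition \ref{linear_exp} and Lemma \ref{lemma_nonlinear_bounds_w} (citing \cite{Kapitula, LLE_periodic} for the details you spell out), and then concludes via the same translation identity $\hat{\w}(x,t)=\mathring{\w}(x-\sigma(t),t)$, i.e.\ $\|\hat{\w}(t)\|_{H^6_{\textrm{per}}(0,T)}=\|\mathring{\w}(t)\|_{H^6_{\textrm{per}}(0,T)}$. Your bookkeeping of the non-decaying factor $|\sigma(t)|$ against the exponentially decaying factors is exactly the structure the paper relies on.
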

\begin{proof}
    The estimates on $|\sigma(t)|$, $|\sigma_t(t)|$ and $||\mathring{\w}(t)||_{H^6_{\textrm{per}}(0,T)}$ follow by a standard procedure, see  \cite{Kapitula} and \cite{LLE_periodic}, taking Proposition \ref{linear_exp} and Lemma \ref{lemma_nonlinear_bounds_w} for (\ref{semilinear_w}) into account. We observe that $$||\hat{\w}(t)||_{H^6_{\textrm{per}}(0,T)} = ||\hat{\w}(\cdot+\sigma(t),t)||_{H^6_{\textrm{per}}(0,T)} = ||\mathring{\w}(t)||_{H^6_{\textrm{per}}(0,T)} $$ for all $t\geq 0$, to finish the proof.
\end{proof}
Let $\w_0 \in H^6_{\textrm{per}}(0,T)$ and $\varepsilon_p$ as in Theorem \ref{theorem_purely_periodic}. We set $E_l = ||\v_0||_{H^3}$ and choose $0< c < \frac{1}{2}$ as in Proposition \ref{prop_existence_modulation functions}.
Under the assumption $E_p < \varepsilon_p$, we consider the template function $\eta: [0,t_{\textrm{max},\gamma}) \rightarrow \R$ given by
\begin{align*}
\begin{split}
        \eta(t) = \sup_{0\leq s\leq t} \left( (1+s)^{\frac{1}{2}}\left(||\mathring{\v}(s)||_{H^3(\R)} + ||(\gamma_x(s),\gamma_s(s))||_{H^4\times H^3}\right) + ||\gamma(t)||_{L^2}\right)
\end{split}
\end{align*}
and show that there exists a constant $C\geq 1$ independent of $E_l$ and $E_p$ such that
\begin{align}
\label{key_inequality}
    \eta(t) \leq C(E_l + \eta(t)^2 + \eta(t)E_p)
\end{align}
for every $t \in [0,t_{\textrm{max},\gamma})$ with $\eta(t) < \frac{c}{2}$. 
By Proposition \ref{prop_existence_modulation functions}, we have the property: if $t_{\textrm{max},\gamma} < \infty$, then 
\begin{align}
\label{final_blow_conditions}
    \limsup_{t \uparrow t_{\textrm{max},\gamma}} \eta(t) \geq \frac{c}{2}.
\end{align}
Furthermore, $\eta$ is continuous by Proposition \ref{prop_existence_modulation functions} and  Corollaries \ref{cor_well_posed_inverse} and  \ref{cor_existence_forward} and monotonically increasing.

\bigskip

\noindent\textbf{Iteration argument.} Suppose we have proven (\ref{key_inequality}). First we take $E_p < \min\{\varepsilon_p, \frac{1}{2C}\}$ which gives
\begin{align}
\label{absorbed_key_inequality}
    \eta(t) \leq 2C(E_l + \eta(t)^2).
\end{align}
Now, choose $4C^2E_l < \frac{c}{2}$. Assuming there exists some $t \in [0,t_{\textrm{max},\gamma})$ such that $\eta(t)\geq 4CE_l$, the continuity of $\eta$ provides a $t_0$ with $\eta(t_0)= 4CE_l < \frac{c}{2}$. Therefore, (\ref{absorbed_key_inequality}) and $c \in (0,\frac{1}{2})$ imply
\begin{align*}
    \eta(t_0) \leq 2C\left(E_l + (16C^2E_l) E_l\right) < 4CE_l.
\end{align*}
This is a contradiction and we arrive at
\begin{align}
\label{stability_result_end}
    \sup_{t \in [0,t_{\textrm{max},\gamma})} \eta(t) \leq 4CE_l < \frac{c}{2}
\end{align}
and hence (\ref{final_blow_conditions}) cannot hold. We conclude that (\ref{stability_result_end}) holds with  $t_{\textrm{max},\gamma} = \tau_{\textrm{max}} = \infty$. This proves (\ref{l2_results}) and it suffices to justify (\ref{key_inequality}). For this purpose, let $t \in [0,t_{\textrm{max},\gamma})$ with $\eta(t)< \frac{c}{2}$.

\bigskip
\noindent \textbf{Bound on $\hat{\v}$.}
We first bound $||\hat{\v}(s)||_{H^3}$ for which we use (\ref{relate_l2}), that is,
\begin{align}
\label{relate_v_hat}
    ||\hat{\v}(s)||_{H^3} \lesssim  (1+s)^{-\frac{1}{2}}\eta(s),
\end{align}
for $s\in [0,t]$.
Together with the nonlinear bounds, Lemma \ref{lemma_all_nonlinear_bounds}, Proposition \ref{prop_linear_l_2} and Theorem \ref{theorem_purely_periodic}, we arrive at
\begin{align}
\begin{split}
        ||\hat{\v}(s)||_{L^2} &\lesssim (1+s)^{-\frac{1}{2}} E_l +  ||\gamma_x(s)||_{L^2} ||\hat{\w}(s)||_{L^\infty} + ||\gamma_x(s)||_{H^1} ||\hat{\v}(s)||_{L^2} \\
    &\quad + \int_0^s (1+s-\tau)^{-\frac{3}{4}}||\mathcal{R}_3(\hat{\w}(\tau),\hat{\v}(\tau),\gamma(\tau))||_{L^1 \cap L^2} \,d\tau \\
     &\quad +\int_0^s (1+s-\tau)^{-\frac{1}{2}} \Bigl( ||\sigma_t(\tau)\hat{\v}_x(\tau)||_{L^2} + ||\mathcal{T}(\hat{\w}(\tau),\gamma(\tau))||_{L^2} \\
     &\qquad \qquad \qquad \qquad \quad\quad \quad + ||(1-\gamma_x(\tau))\mathcal{R}_{2,2}(\hat{\w}(\tau), \hat{\v}(\tau))||_{L^2}  \Bigr) \,d \tau \\
     &\lesssim (1+s)^{-\frac{1}{2}} E_l +  (1+s)^{-\frac{1}{2}}\eta(t)^2 + (1+s)^{-\frac{1}{2}}\eta(t)E_p \\
     & \quad +\eta(t)^2\int_0^s (1+s-\tau)^{-\frac{3}{4}} (1+\tau)^{-1} d\,\tau \\
     &\quad +  \eta(t)E_p \int_0^s (1+s-\tau)^{-\frac{1}{2}} (1+\tau)^{-\frac{1}{2}} e^{-\delta_2 \tau} d\, \tau \\
     &\lesssim (1+s)^{-\frac{1}{2}}\left( E_l+ \eta(t)^2 + \eta(t)E_p \right),
     \label{nonlinear_estimate_hat_v}
\end{split}
\end{align}
for $s\in [0,t]$, where we used that $\eta(t) \leq \frac{1}{2}$.
\bigskip

\noindent \textbf{Bounds on $\gamma$.}
We estimate, with (\ref{relate_v_hat}), Proposition \ref{prop_linear_l_2}, Lemma \ref{lemma_all_nonlinear_bounds} and Theorem \ref{theorem_purely_periodic},
\begin{align}
\begin{split}
        ||\partial_s^k \partial_x^l \gamma(s)||_{L^2} &\leq ||\partial_s^k \partial_x^l s_p(s)||_{L^2 \rightarrow L^2}||\v_0||_{L^2} + \int_0^s ||\partial_s^k \partial_x^l s_p(s-\tau)||_{L^1\rightarrow L^2}||\mathcal{R}_3(\hat{\w}(\tau),\hat{\v}(\tau),\gamma(\tau))||_{L^1} \,d\tau \\
     &\quad +\int_0^s ||\partial_s^k \partial_x^l s_p(s-\tau)||_{L^2\rightarrow L^2} \Bigl( ||\sigma_t(\tau)\hat{\v}_x(\tau)||_{L^2} + ||\mathcal{T}(\hat{\w}(\tau),\gamma(\tau))||_{L^2}\\
     &\qquad \qquad \qquad \qquad \qquad \qquad \qquad \quad + ||(1-\gamma_x(\tau))\mathcal{R}_{2,2}(\hat{\w}(\tau), \hat{\v}(\tau))||_{L^2} \Bigr) \,d \tau \\
     &\lesssim (1+s)^{-\frac{k+l}{2}} E_l + \eta(s)^2\int_0^s (1+s-\tau)^{-\frac{1}{4} - \frac{k+l}{2}} (1+\tau)^{-1} d\,\tau \\
     & \qquad\qquad \qquad\qquad+ \eta(s)E_p\int_0^s (1+s-\tau)^{-\frac{k+l}{2}} (1+\tau)^{-\frac{1}{2}} e^{-\delta_2 \tau} d\,\tau \\
     &\lesssim (E_l +  \eta(t)^2 + \eta(t)E_p) \begin{cases}
     1, & \textrm{ if } l + k = 0\\
         (1+s)^{-\frac{1}{2}}, & \textrm{ if } l + k = 1\\
         (1+s)^{-1}, & \textrm{ otherwise,}
     \end{cases}
     \label{nonlinear_estimate_gamma}
\end{split}
\end{align}
for every $s \in [0,t]$.

\bigskip

\noindent\textbf{Bounds on $\mathring{\v}$.}
Invoking (\ref{nonlinear_estimate_hat_v}) and (\ref{nonlinear_estimate_gamma}), (\ref{relate_h2}) yields
\begin{align*}
    ||\mathring{\v}(s)||_{L^2} \lesssim  (1+s)^{-\frac{1}{2}}\left( E_l+ \eta(t)^2 + \eta(t)E_p\right)
\end{align*}
for all $0\leq s \leq t$.
Finally, with Proposition \ref{proposition_damping} and (\ref{nonlinear_estimate_gamma}), we obtain
\begin{align*}
    ||\mathring{\v}(s)||_{H^3} \lesssim(1+s)^{-\frac{1}{2}}\left( E_l+ \eta(t)^2 + \eta(t)E_p\right),
\end{align*}
for all $0\leq s \leq t$, using $\eta(t) \leq \frac{1}{2}$.

We have shown the key inequality (\ref{key_inequality}).

\begin{remark}
\label{milder_decay}
    The proof of (\ref{key_inequality}) reveals that the nonlinear iteration argument closes  as long as $|\sigma_t(t)|$ and  $||\hat{\w}(t)||_{H^6_{\textrm{per}}(0,T)} $ decay of order $(1+t)^{-\kappa}$ for some $\kappa >\frac{1}{2}$.
\end{remark}

\subsection{Refined \texorpdfstring{$L^\infty$}{Lg}-estimates}
\label{section_l_infty_control}

By Proposition \ref{prop_linear_l_infty} 
and the nonlinear bounds in Lemma \ref{lemma_all_nonlinear_bounds} and inserting the estimates (\ref{l2_results}), we obtain
\begin{align*}
    ||\gamma_x(t)||_{L^\infty} &\leq ||\partial_x s_p(t)||_{L^2 \rightarrow L^\infty}||\v_0||_{L^2} + \int_0^t ||\partial_x s_p(t-s)||_{L^1\rightarrow L^\infty}||\mathcal{R}_3(\hat{\w}(s),\hat{\v}(s),\gamma(s))||_{L^1} \,ds \\
     &+\int_0^t ||\partial_x s_p(t-s)||_{L^2\rightarrow L^\infty} \Bigl( ||\sigma_t(s)\hat{\v}_x(s)||_{L^2} + ||\mathcal{T}(\hat{\w}(s),\gamma(s))||_{L^2} \\
     &\qquad \qquad \qquad \qquad \qquad \qquad  + ||(1-\gamma_x(s))\mathcal{R}_{2,2}(\hat{\w}(s), \hat{\v}(s))||_{L^2}  \Bigr) \,ds \\
     &\lesssim (1+t)^{-\frac{3}{4}} E_l  + E_l \left(\int_0^t (1+t-s)^{-1} (1+s)^{-1} \,ds + \int_0^t (1+t-s)^{-\frac{3}{4}} (1+s)^{-\frac{1}{2}} e^{-\delta_2 s} \,ds \right) \\
     &\lesssim (1+t)^{-\frac{3}{4}} E_l,
\end{align*}
for all $t\geq 0$. 
Furthermore, we have
\begin{align*}
    ||\hat{\v}(t)||_{L^\infty} &\lesssim (1+t)^{-\frac{3}{4}} E_l +  \int_0^t (1+t-s)^{-1}||\mathcal{R}_3(\hat{\w}(s),\hat{\v}(s),\gamma(s))||_{L^1 \cap L^2} \,ds \\
    &\quad + \int^t_0 e^{-\delta_1(t-s)} ||\mathcal{R}_3(\hat{\w}(s), \hat{\v}(s),\gamma(s))||_{H^1} \,ds
    \\
    & \quad +\int_0^t (1+t-s)^{-\frac{3}{4}} \Bigl( ||\sigma_t(s)\hat{\v}_x(s)||_{H^1} \\
     &\qquad \qquad \qquad \qquad \qquad   + ||(1-\gamma_x(s))\mathcal{R}_{2,2}(\hat{\w}(s), \hat{\v}(s))||_{H^1} + ||\mathcal{T}(\hat{\w}(s),\gamma(s))||_{H^1} \Bigr) \,ds \\
     &\lesssim (1+t)^{-\frac{3}{4}} E_l  +E_l \left(\int_0^t (1+t-s)^{-1}(1+s)^{-1} \,ds +\int_0^t (1+t-s)^{-\frac{3}{4}}(1+s)^{-\frac{1}{2}}e^{-\delta_2 s} \,ds \right) \\
     &\lesssim (1+t)^{-\frac{3}{4}} E_l,
\end{align*}
for all $t\geq 0$.
With the help of (\ref{relate_l_infty}),
we deduce
\begin{align*}
    ||\mathring{\v}(t)||_{L^\infty} \lesssim (1+t)^{-\frac{3}{4}} E_l,
\end{align*}
for all $t\geq 0$.
Finally, we observe that for $\sigma_* := \int_0^\infty \sigma_s(s) \,ds$, we have
\begin{align*}
    |\sigma_*-\sigma(t)| \leq \int_t^\infty |\sigma_s(s) | \,ds \lesssim  e^{-\delta_1 t}E_p
\end{align*}
and therefore
\begin{align*}
    ||\phi(\cdot + \sigma_* + \gamma(\cdot,t), t) - \phi(\cdot + \sigma(t) + \gamma(\cdot,t),t)||_{L^\infty} \lesssim e^{-\delta_1 t}E_p,
\end{align*}
for all $t\geq 0$. As described in Section \ref{section_strategy}, Theorem \ref{main_result} follows.

\section{Discussion}
\label{section_discussion}
\subsection{Applicability of the scheme to viscous conservation laws}
\label{section_conservation}
A satisfying $L^\infty$-stability theory considering $C_{\textrm{ub}}$-perturbations is established in parabolic reaction-diffusion systems \cite{Bjoerns} which can be extended beyond the parabolic setting as shown in the context of the FitzHugh-Nagumo system \cite{ours}.  On the other hand, there are crucial obstacles in establishing pure $L^\infty$-stability results for viscous conservation laws as described in \cite{SHR, ours}. An interesting ingredient of the $L^2$-analysis in \cite{inv_paper} is that the authors introduce a sum of spatio-temporal modulation functions to capture more than one critical mode arising from the presence of the conservation laws. Therefore, a key difficulty lies in the fact that the critical dynamics is governed by a coupled Whitham system for which one cannot immediately apply the Cole-Hopf transform as done in the scalar case for which this Whitham system reduces to the Burgers' equation \cite{Bjoerns, ours}.  This begs the question of whether we can apply our $L^2_{\textrm{per}}(0,T)\oplus L^2(\R)$-scheme to this setting, cf. \cite{inv_paper} and \cite{STVenant1}.  In order to generate a result like Theorem \ref{main_result} in this setting, the  question reduces to whether such a sum of spatio-temporal modulation functions is compatibel with our approach. As the $L^2_{\textrm{per}}(0,T)$-theory can be settled by a standard procedure, we strongly expect that it is relatively straightforward to allow for $L^2_{\textrm{per}}(0,T)\oplus L^2(\R)$-perturbations by precisely following our nonlinear analysis, in particular using the modified versions of inverse- and forward-modulated perturbations in \S \ref{section_inverse_modulated} and \S \ref{section_foward_modulated}, and by respecting the semigroup decomposition and estimates established in \cite[pp. 149]{inv_paper} and the nonlinear damping estimate in \cite[pp. 146]{inv_paper}.

\subsection{Uniformly subharmonic plus localized perturbations} 
\label{section_subharmonic}
Considering the uniformly subharmonic nonlinear stability result for the Lugiato-Lefever equation in \cite{uniform}, the question arises whether we can show a nonlinear stability result involving perturbations $\w_0 + \v_0$ with $\w_0 \in H_{\textrm{per}}^6(0,NT) \cap L^1_{\textrm{per}}(0,NT)$, $\v_0 \in H^3(\R)$ and which is uniform in $N \in \mathbb{N}$. We note that as in \cite{uniform} the additional condition $L^1_{\textrm{per}}(0,NT)$ is crucial in \cite{uniform} to guarantee the uniformity in $N$. 

Suppose \ref{assH1} and \ref{assD1}-\ref{assD3}.
Now, given a solution $\u = \w+ \v$ of \ref{LLE} where $\w$ solves (\ref{LLE_period}) and $\v$ solves (\ref{LLE_l2}) with $\u(0) = \w_0 + \v_0$, we broadly sketch a possible scheme for the claimed stability estimate
\begin{align}
    \label{uniform_result}
    ||\u(t)||_{L^\infty} \lesssim (1+t)^{-\frac{3}{4}} \left(||\w_0||_{H^6_{\textrm{per}}(0,NT) \cap L^1_{\textrm{per}}(0,NT)} +||\v_0||_{ H^3(\R)}\right), \quad t\geq 0,
\end{align}
uniformly in $N \in \mathbb{N}$.
Keeping the main lines of this paper, we might introduce the inverse-modulated perturbations
\begin{align*}
    \hat{\w}(x,t) &= \w(x- \sigma(t) - \gamma_1(x,t),t) - \phi, \\
    \hat{\v}(x,t) &= \u(x- \sigma(t) -\gamma_1(x,t) - \gamma_2(x,t),t) - \hat{\w}(x,t) - \phi(x)
\end{align*}
and the forward-modulated perturbations
\begin{align*}
    \mathring{\w}(x,t) &= \w(x,t) - \phi(x+ \sigma(t) + \gamma_1(x,t)), \\ 
    \mathring{\v}(x,t) &= \u(x,t) - \hat{\w}(x+ \sigma(t) +\gamma_1(x,t) + \gamma_2(x,t),t) - \phi(x+ \sigma(t) +\gamma_1(x,t) + \gamma_2(x,t)),
\end{align*}
 with modulation functions $$\sigma: [0,\infty) \rightarrow \R, \gamma_1 : [0,\infty) \rightarrow L^2_{\textrm{per}}(0,NT) \textrm{ and } \gamma_2: [0,\infty) \rightarrow L^2(\R).$$  
To derive suitable perturbation equations for $\hat{\v}$ and $\mathring{\v}$, we refer to Appendices \ref{derivation_inv} and \ref{derivation_forward}.
Finally, with the established decay rates \cite[Theorem 1.4]{uniform}, that is, $$||\partial_x\gamma_1(t)||_{L^2_{\textrm{per}}(0,NT)} , |\sigma_t(t)|, ||\hat{\w}(t)||_{L^2_{\textrm{per}}(0,NT)} \sim O((1+t)^{-\frac{3}{4}})$$ uniformly in $N$, one may invoke Remark \ref{milder_decay} and  \cite[Proposition 3.7]{uniform}). It remains to prove versions of Proposition \ref{proposition_damping} and  Lemma \ref{lemma_relate} to deduce (\ref{uniform_result}).   

\subsection{Nonlocalized phase modulations}
An interesting feature of the modulational stability estimate (\ref{u_l_infty_2}) is that we capture the most critical dynamics of the periodic wave by a phase modulation from $\R + H^3(\R)$. This class of functions covers the simplest nontrivial nonlocalized phase perturbations one could think of. Therefore,  an interesting open question is whether we can allow for perturbations from $L^2_{\textrm{per}}(0,T) \oplus L^2(\R)$ as well as ($L^\infty$-large) initial modulations $\gamma_0 -\sigma_*$ such that $||\gamma_0'||_{ H^3}$ is small to conclude an estimate such as (\ref{u_l_infty_2}). We refer to \cite{zumbrun_remark} for a nonlinear stability result against localized perturbations in this context, allowing for a nonlocalized initial phase modulation.

\subsection{Fully nonlocalized perturbations}
\label{section_modulation}
Local well-posedness results in nonlinear Schr\"odinger-type equations, possibly with (periodic) potentials, have been established for initial data  from the modulation space $M^{m}_{\infty,1}(\R)$, see e.g. \cite{kato,Leonid_thesis}, \cite[pages 245-252]{book_part_by_kato} and references therein.  At the same time, global in time results are widely open. Due to the additional dissipativity  in (\ref{LLE}) compared to the classical cubic nonlinear Schr\"odinger equation, we expect the possibility to extend our result to perturbations from $M^m_{\infty,1}(\R)$ for sufficiently large $m \in \mathbb{N}$.
This might be achieved by conceptually  following the lines of \cite{ours}. Nevertheless, it turns out that high-frequency damping and regularity control are delicate challenges and it seems that essentially new ideas have to be developed.

\appendix

\section{Derivation of (\ref{modulated_v})}
\label{derivation_inv}
We write  $\hat{\u}(x,t) = \u(x- \sigma(t) - \gamma(x,t),t) - \phi(x)$, insert (\ref{modulated_w}) and \cite[(3.4)]{uniform} to obtain
\begin{align*}
    (\partial_t-\mathcal{L}_0) (\hat{\v}) &= (\partial_t-\mathcal{L}_0)\hat{\u} - (\partial_t-\mathcal{L}_0) \hat{\w} \\
    &= - (\partial_t-\mathcal{L}_0)(\phi'\gamma) + (1-\gamma_x)\mathcal{R}_1(\hat{\u}) -\mathcal{R}_1(\hat{\w} ) + \sigma_t \hat{\w}_x \\
    &\qquad + \partial_x\mathcal{S}(\hat{\u}, \gamma,\gamma_t,\sigma_t) + \partial_x^2\mathcal{P}(\hat{\u},\gamma) + (\partial_t -\mathcal{L}_0)(\gamma_x \hat{\u}),
\end{align*}
where, recalling $ \hat{\u} = \hat{\v} + \hat{\w}$,
\begin{align*}
    \widetilde{\mathcal{S}}(\hat{\u}, \gamma,\gamma_t,\sigma_t) &= - \gamma_t\hat{\u} - \sigma_t \hat{\u} + \beta \mathcal{J}\left( \frac{\gamma_{xx}}{(1-\gamma_x)^2}\hat{\u} - \frac{\gamma_x^2}{1-\gamma_x}\phi' \right) \\
    &=  \widetilde{\mathcal{S}}(\hat{\v}, \gamma,\gamma_t,\sigma_t) - \gamma_t \hat{\w} - \sigma_t \hat{\w} + \beta \mathcal{J}\left( \frac{\gamma_{xx}}{(1-\gamma_x)^2} \hat{\w}\right)\\
    \mathcal{P}(\hat{\u}, \gamma) &= -\beta \mathcal{J}\left(\gamma_x + \frac{\gamma_x}{1-\gamma_x}\right) \hat{\u} = \mathcal{P}(\hat{\v}, \gamma)  + \mathcal{P}(\hat{\w}, \gamma) .
\end{align*}
We emphasize that the critical terms $\sigma_t \hat{\w}$ and $(\partial_t - \mathcal{L}_0)(\phi' \sigma)$ cancel out.
We arrive at
\begin{align*}
     (\partial_t-\mathcal{L}_0) (\hat{\v} + \phi'\gamma - \gamma_x\hat{\w} - \gamma_x\hat{\v}) = & \mathcal{Q}(\hat{\w},\hat{\v},\gamma) + \partial_x\mathcal{S}(\hat{\v}, \gamma)  + \partial_x^2\mathcal{P}(\hat{\v},\gamma)\\
     & - \sigma_t \hat{\v}_x + (1-\gamma_x)\mathcal{R}_{2,2}(\hat{\w},\hat{\v}) + \mathcal{T}(\hat{\w},\gamma) 
\end{align*}
with 
\begin{align*}
    \mathcal{Q}(\hat{\w},\hat{\v},\gamma) &= (1-\gamma_x) (\mathcal{R}_1(\hat{\v} + \hat{\w}) - \mathcal{R}_1(\hat{\w}) - \mathcal{R}_{2,2}(\hat{\w},\hat{\v})) = (1-\gamma_x) \mathcal{R}_{2,1}(\hat{\v}, \hat{\w}), \\
     \mathcal{S}(\hat{\v}, \gamma) &= - \gamma_t\hat{\v} + \beta \mathcal{J}\left( \frac{\gamma_{xx}}{(1-\gamma_x)^2}\hat{\v} - \frac{\gamma_x^2}{1-\gamma_x}\phi' \right), \\
    \mathcal{T}(\hat{\w},\gamma)&= - \gamma_x \mathcal{R}_1(\hat{\w}) - \partial_x\left(\gamma_t \hat{\w} - \beta \mathcal{J}\left( \frac{\gamma_{xx}}{(1-\gamma_x)^2} \hat{\w}\right)\right) - \partial_x^2\left(\beta \mathcal{J}\left(\gamma_x + \frac{\gamma_x}{1-\gamma_x}\right) \hat{\w}\right).
\end{align*}
\section{Derivation of (\ref{forward_equation})}
\label{derivation_forward}
We recall that 
\begin{align*}
    \mathring{\v}(x,t) = \u(x,t) - \w(x+\gamma(x,t),t),
\end{align*}
set $\mathring{\phi}(x) = \phi(x+ \gamma(x,t))$ and use the notions (\ref{linearization}), (\ref{def_R_1}) and (\ref{def_R_2}). Furthermore, we write
\begin{align*}
    \mathcal{D}(\u) = \mathcal{J} \left(\begin{pmatrix}-\beta & 0 \\
    0 & -\beta \end{pmatrix}\u_{xx} + \begin{pmatrix}-\alpha & 0 \\
    0 & -\alpha \end{pmatrix}\u \right) - \u .
\end{align*}
Using that $\u$ and $\w$ are a solutions of (\ref{LLE_real}), we derive
\begin{align*}
    &(\partial_t - \mathcal{L}_0(\mathring{\phi})) \mathring{\v}(t)\\
    & = (\partial_t - \mathcal{D})(\u(t)) - (\partial_t - \mathcal{D})(\w)(\cdot+\gamma(\cdot,t),t) - \mathcal{N}'(\mathring{\phi})\mathring{\v}(t) + \mathcal{R}_5(\w(t),\gamma(t),\gamma_t(t)) \\&= \mathcal{N}(\mathring{\v}(t) + \tilde{\w}(\cdot+ \gamma(\cdot,t),t) + \mathring{\phi}) - \mathcal{N}(\tilde{\w}(\cdot+ \gamma(\cdot,t),t) + \mathring{\phi})  -\mathcal{N}'(\mathring{\phi})\mathring{\v}(t) + \mathcal{R}_5(\w(t),\gamma(t),\gamma_t(t)) \\
    &= \mathcal{R}_2(\mathring{\phi})(\tilde{\w}(\cdot+\gamma(\cdot,t),t),\mathring{\v}(t))+ \mathcal{R}_5(\tilde{\w}(t),\gamma(t),\gamma_t(t)) 
\end{align*}
with 
\begin{align*}
    &\mathcal{R}_5(\tilde{\w}(t),\gamma(t),\gamma_t(t)) \\
    &=  - \w_x(\cdot+\gamma(\cdot,t),t) \gamma_t(t) - \beta \mathcal{J}\left( \w_x(\cdot+\gamma(\cdot,t),t)(\gamma_{xx}(t))  + \w_{xx}(\cdot+\gamma(\cdot,t),t)(2\gamma_x(t) + \gamma_x(t)^2)\right)\\
    &= -\tilde{\w}_x(\cdot+\gamma(\cdot,t),t) \gamma_t(t) -\phi'(\cdot+\gamma(\cdot,t),t) \gamma_t(t) \\
    & \qquad - \beta\mathcal{J} \Bigl(\tilde{\w}_x(\cdot+\gamma(\cdot,t),t)(\gamma_{xx}(t))  + \tilde{\w}_{xx}(\cdot+\gamma(\cdot,t),t)(2\gamma_x(t) + \gamma_x(t)^2) \\
    &\qquad \qquad \quad\quad + \phi'(\cdot+\gamma(\cdot,t),t)(\gamma_{xx}(t))  + \phi''(\cdot+\gamma(\cdot,t),t)(2\gamma_x(t) + \gamma_x(t)^2) \Bigr).
\end{align*}
Recall that $\tilde{\w}(t) = \w(t)-\phi$ denotes the unmodulated perturbation of $\w$.

    \section{Proof of Proposition \ref{prop_existence_modulation functions}}
    \label{proof_local_mod}
    We recall the Duhamel formula (\ref{implicit_gamma}) given by
    \begin{align*}
    \gamma(t) = s_p(t)\v_0 + \int_0^t s_p(t-s)\Bigl(&\mathcal{R}_3(\hat{\w}(s),\hat{\v}(s),\gamma(s)) -\sigma_t(s)\hat{\v}_x(s) \\
    &+ (1-\gamma_x(s))\mathcal{R}_{2,2}(\hat{\w}(s),\hat{\v}(s)) + \mathcal{T}(\hat{\w}(s),\gamma(s)) \Bigr)\,ds
\end{align*}
whereas
\begin{align*}
  \hat{\v}(x,t) = \u(x- \sigma(t) - \gamma(x,t),t) - \hat{\w}(x,t) - \phi(x).
\end{align*}
To prevent confusion, we write
\begin{align*}
  \hat{\v}(t) = \hat{\v}(\gamma(t),t)
\end{align*}
and for the sake of readability, we introduce
\begin{align*}
    \tilde{\mathcal{N}}(t,\sigma(s),\gamma(s),s) = s_p(t-s)\Bigl(&\mathcal{R}_3(\hat{\w}(s),\hat{\v}(s),\gamma(s)) -\sigma_t(s)\hat{\v}_x(s) \\ &+ (1-\gamma_x(s))\mathcal{R}_{2,2}(\hat{\w}(s), \hat{\v}(s)) + \mathcal{T}(\hat{\w}(s),\gamma(s))\Bigr).
\end{align*}
We do a contraction argument. For this purpose, we establish nonlinear bounds.
\begin{lemma}
\label{contract_estimates_mod}
   Fix $0\leq \tau_1  \leq \tau_2<\min\{\tau_{\textrm{max}},t_{\textrm{max},\sigma}\}$. Let $j,k \in \mathbb{N}_0$ and fix $K>0$. Suppose that $$  \sup_{t \in [0,\tau_2]} \left(||\w(t)||_{W^{1,\infty}} + |\sigma_t(t)|+   ||\v(t)||_{W^{1,\infty}}  \right)\leq K.$$ There exist constants $C>0$ and $C_{j,k}>0$ such that we have the bounds
   \begin{align}
   \label{bound_v_hat_appendix}
       \sup_{s \in [\tau_1,\tau_2]}||\hat{\v}(\gamma_1(s),s)-\hat{\v}(\gamma_2(s),s)||_{L^2} \leq C\sup_{s \in [\tau_1,\tau_2]}||\gamma_1(s)- \gamma_2(s)||_{L^2},
   \end{align}
     and
        \begin{align}
        \begin{split}
            \sup_{t \in [\tau_1,\tau_2]} \sup\limits_{s \in [\tau_1, t]}&||\partial_t^j \partial_x^k (\tilde{\mathcal{N}}(t,\sigma(s),\gamma_2(s),s) - \tilde{\mathcal{N}}(t,\sigma(s),\gamma_2(s),s))||_{L^2} \\
             &\leq C_{j,k} \sup_{s \in [\tau_1,\tau_2]}(||\gamma_1(s)-\gamma_2(s)||_{H^2} + ||\partial_t\gamma_1(s) - \partial_t \gamma_2(s)||_{L^2}),
        \end{split}
             \label{bound_N_hat_appendix}
        \end{align}
    for any $\gamma_1,\gamma_2 \in C([\tau_1,\tau_2]; H^5(\R)) \times C^1([\tau_1,\tau_2]; H^3(\R))$ with $\sup_{t \in [\tau_1,\tau_2]}||\partial_x\gamma_1(t)||_{L^\infty}, ||\partial_x\gamma_1(t)||_{L^\infty} \leq \frac{1}{2}$.
\end{lemma}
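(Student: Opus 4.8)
The plan is to prove the two estimates \eqref{bound_v_hat_appendix} and \eqref{bound_N_hat_appendix} in sequence, since the second one relies on the first together with the nonlinear bounds from Lemma~\ref{lemma_all_nonlinear_bounds}, and on the linear smoothing properties of $s_p$ from Proposition~\ref{prop_linear_l_2}. The whole proof is a Lipschitz-dependence computation; there is no conceptual subtlety, only bookkeeping, so I would keep the estimates at the level of ``apply the chain/substitution rule, then the mean value theorem, then the nonlinear bounds.''

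\textbf{Step 1: the bound \eqref{bound_v_hat_appendix} on $\hat{\v}$.} Recall $\hat{\v}(\gamma(s),s) = \u(\cdot - \sigma(s) - \gamma(\cdot,s),s) - \hat{\w}(\cdot,s) - \phi$, where $\hat{\w}(\cdot,s) = \w(\cdot - \sigma(s),s) - \phi$ does \emph{not} depend on $\gamma$. Hence the $\phi$ and $\hat{\w}$ terms cancel in the difference and
\[
\hat{\v}(\gamma_1(s),s) - \hat{\v}(\gamma_2(s),s) = \u(\cdot - \sigma(s) - \gamma_1(\cdot,s),s) - \u(\cdot - \sigma(s) - \gamma_2(\cdot,s),s).
\]
Since $\u(s) = \w(s) + \v(s)$ with $\w(s) \in H^6_{\textrm{per}}(0,T) \hookrightarrow W^{1,\infty}(\R)$ and $\v(s) \in H^3(\R) \hookrightarrow W^{1,\infty}(\R)$, the map $\u(s)$ is uniformly Lipschitz in space on $[\tau_1,\tau_2]$ under the standing bound $\sup_{[0,\tau_2]}(\|\w(t)\|_{W^{1,\infty}} + \|\v(t)\|_{W^{1,\infty}}) \le K$. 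Writing the difference as an integral of $\u_x$ along the segment joining the two shifted arguments, I get $\|\hat{\v}(\gamma_1(s),s) - \hat{\v}(\gamma_2(s),s)\|_{L^2} \lesssim \|\u_x(s)\|_{L^\infty}\,\|\gamma_1(s) - \gamma_2(s)\|_{L^2} \lesssim \|\gamma_1(s) - \gamma_2(s)\|_{L^2}$, and taking the supremum over $s \in [\tau_1,\tau_2]$ yields \eqref{bound_v_hat_appendix}.

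\textbf{Step 2: the bound \eqref{bound_N_hat_appendix} on $\tilde{\mathcal{N}}$.} Here $\tilde{\mathcal{N}}(t,\sigma(s),\gamma(s),s) = s_p(t-s)\big(\mathcal{R}_3(\hat{\w}(s),\hat{\v}(s),\gamma(s)) - \sigma_t(s)\hat{\v}_x(s) + (1-\gamma_x(s))\mathcal{R}_{2,2}(\hat{\w}(s),\hat{\v}(s)) + \mathcal{T}(\hat{\w}(s),\gamma(s))\big)$, and I must control $\partial_t^j\partial_x^k$ of the difference when $\gamma_1$ is replaced by $\gamma_2$ everywhere. By Proposition~\ref{prop_linear_l_2}, each $\partial_t^j\partial_x^k s_p(t-s)$ maps $L^2 \to L^2$ (with the crucial cancellation of derivative loss, since $\mathcal{R}_3$ carries up to two spatial derivatives in divergence form); so it suffices to estimate the $L^2$-norm --- or $H^{-2}\cap L^2$-type norm, matching the $\partial_x^k$-structure --- of the difference of the bracketed nonlinearities. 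Each term in the bracket is, by \eqref{def_R_2}, \eqref{def_R_1} and the definitions of $\mathcal{R}_3,\mathcal{T}$, a smooth (in fact polynomial) expression in $\hat{\w}$, $\phi$, $\phi'$, $\sigma_t$, $\hat{\v}$, $\gamma_x$, $\gamma_{xx}$, $\gamma_t$ and the rational factors $(1-\gamma_x)^{-1}$; since $\|\partial_x\gamma_i\|_{L^\infty} \le \tfrac12$ these rational factors and their derivatives are bounded and Lipschitz in $\gamma_x$. Subtracting the $\gamma_1$- and $\gamma_2$-versions and expanding by multilinearity, each difference is a sum of terms in which exactly one factor is a difference $\gamma_{1}-\gamma_2$, $\partial_x(\gamma_1-\gamma_2)$, $\partial_x^2(\gamma_1-\gamma_2)$, $\partial_t(\gamma_1-\gamma_2)$, or $\hat{\v}(\gamma_1,s)-\hat{\v}(\gamma_2,s)$, times bounded factors; applying Lemma~\ref{lemma_all_nonlinear_bounds} (or rather its multilinear proof) together with Step~1 and the Sobolev embeddings $H^k_{\textrm{per}}(0,T)\hookrightarrow W^{k-1,\infty}(\R)$ and $H^2(\R)\hookrightarrow W^{1,\infty}(\R)$ controls each such term by $C\big(\|\gamma_1(s)-\gamma_2(s)\|_{H^2} + \|\partial_t\gamma_1(s)-\partial_t\gamma_2(s)\|_{L^2}\big)$. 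Summing over the finitely many terms and taking suprema over $s\in[\tau_1,t]$ and $t\in[\tau_1,\tau_2]$ gives \eqref{bound_N_hat_appendix}.

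\textbf{Main obstacle.} The conceptual content is negligible; the real work is verifying, term by term, that after subtraction every contribution can be arranged so that a \emph{single} $\gamma$-difference factor appears and the remaining factors stay bounded in the appropriate $W^{m,\infty}$ or $H^m$ norms --- in particular keeping careful track of how many spatial derivatives the operator $s_p(t-s)$ can absorb (via Proposition~\ref{prop_linear_l_2}) versus how many land on $\hat{\v}$, $\hat{\w}$ or $\gamma$, and ensuring the time-derivative counting in $\partial_t^j$ (which hits both $s_p$ and, through $\hat{\v}_x$ and $\sigma_t$, the nonlinearity) closes. This is entirely analogous to the local-existence contraction arguments in \cite{haragus} and \cite{uniform}, so I would state it as ``a direct but lengthy computation'' and relegate the routine expansions, appealing to Lemma~\ref{lemma_all_nonlinear_bounds}, Step~1, and Proposition~\ref{prop_linear_l_2}.
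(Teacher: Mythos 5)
Your proposal is correct and follows essentially the same route as the paper's proof: the $\hat{\v}$-difference is bounded by the mean value theorem after observing that $\hat{\w}$ and $\phi$ cancel (using the assumed $W^{1,\infty}$-bounds on $\w$ and $\v$), and the nonlinearity differences are estimated term by term, with all $\partial_t^j\partial_x^k$-derivatives and the divergence structure $\partial_x\mathcal{S}+\partial_x^2\mathcal{P}$ of $\mathcal{R}_3$ absorbed by $s_p(t-s)$ via Proposition \ref{prop_linear_l_2}, together with (\ref{bound_v_hat_appendix}) and the multilinear structure behind Lemma \ref{lemma_all_nonlinear_bounds}. Only your parenthetical remark that $\partial_t^j$ also hits the nonlinearity is off --- the integrand depends on $s$, not $t$, so these derivatives fall on $s_p(t-s)$ alone, exactly as your main argument (and the paper) treats them.
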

\begin{proof}
Let $t \in [\tau_1,\tau_2]$ and $s\in [\tau_1,t]$. We rewrite
       \begin{align*}
           \hat{\v}(x,\gamma_1(s),s)-\hat{\v}(x,\gamma_2(s),s) &= \v(x- \sigma(s) - \gamma_1(x,s),s) -\v(x- \sigma(s) - \gamma_2(x,s),s) \\ & \quad   +  \w(x- \sigma(s) - \gamma_2(x,s),s) - \w(x- \sigma(s) - \gamma_1(x,s),s)  
       \end{align*}
       and since $\w(x- \sigma(s) - \gamma(x,s),s) - \phi(x) = \hat{\w}(x-\gamma(x,s),s) $, this yields (\ref{bound_v_hat_appendix}) by the mean value theorem.
      Let $j,k \in \mathbb{N}_0$.  Recalling the choice of $\mathcal{R}_3 = \mathcal{Q} + \partial_x \mathcal{S} + \partial_x^2 \mathcal{P}$ and the estimates on $s_p(t)$ from Proposition \ref{prop_linear_l_2}, we obtain
        \begin{align*}
            ||&\partial_t^j\partial_x^k s_p(t-s)(\mathcal{R}_3(\hat{\w}(s),\hat{\v}(\gamma_1(s),s),\gamma_1(s)) -\mathcal{R}_3(\hat{\w}(s),\hat{\v}(\gamma_2(s), s),\gamma_2(s)))||_{L^2} \\
            &\quad \leq C_{j,k} (||\gamma_1(s)-\gamma_2(s)||_{H^2} + ||\partial_t\gamma_1(s) - \partial_t \gamma_2(s)||_{L^2}).
        \end{align*}
        by taking derivatives on $s_p(t-s)$ and (\ref{bound_v_hat_appendix}).
Next, with the Cauchy-Schwarz inequality, we find
        \begin{align*}
            ||&\partial_t^j\partial_x^k s_p(t-s)\Bigl((1-\partial_x\gamma_1(s))\mathcal{R}_{2,2}(\hat{\w}(s),\hat{\v}(\gamma_1(s),s)) \\  & -(1-\partial_x\gamma_2(s))\mathcal{R}_{2,2}(\hat{\w}(s), \hat{\v}(\gamma_2(s),s))\Bigr)||_{L^2} \leq C_{j,k} ||\gamma_1(s)-\gamma_2(s)||_{H^1}.
        \end{align*}
        Moreover, we have
        \begin{align*}
              ||&\partial_t^j\partial_x^k s_p(t-s)\Bigl(\sigma_t(s)\left(\hat{\v}_x(\gamma_1(s),s) - \hat{\v}_x(\gamma_2(s),s)\right) \Bigr)||_{L^2} \leq C_{j,k} ||\gamma_1(s)-\gamma_2(s)||_{L^2}.
        \end{align*}
    Together with 
    \begin{align*}
      ||&\partial_t^j\partial_x^k s_p(t-s)\Bigl(\mathcal{T}(\hat{\w}(s), \gamma_1(s)) - \mathcal{T}(\hat{\w}(s),\gamma_1(s))\Bigr)||_{L^2}   \\
      &\leq C_{j,k} (||\gamma_1(s)-\gamma_2(s)||_{H^2} + ||\partial_t\gamma_1(s) - \partial_t \gamma_2(s)||_{L^2}),
    \end{align*}
   again taking derivatives on $s_p(t-s)$, we arrive at (\ref{bound_N_hat_appendix}).
\end{proof}
By the choice of $s_p$, we immediately have that $\gamma(t) = 0 $ for $t \in [0,1]$. We need to justify that we can extend the modulation function $\gamma$ to a maximal time such that the alternative (\ref{finite_time_cond_gamma}) holds. 
\begin{proof}[Proof of Proposition \ref{prop_existence_modulation functions}]
Let $\tilde{\gamma}$ be a solution of (\ref{implicit_gamma}) on $[0,t_0]$ with some $t_0>0$. Lemma \ref{contract_estimates_mod} tells us that 
\begin{align}
\begin{split}
        \gamma \mapsto  s_p(t)\v_0 + \int_{t_0}^t s_p(t-s)\Bigl(&\mathcal{R}_3(\hat{\w}(s),\hat{\v}(s),\gamma(s)) -\sigma_t(s)\hat{\v}_x(s) \\
    &+ (1-\gamma_x(s))\mathcal{R}_{2,2}(\hat{\w}(s),\hat{\v}(s)) + \mathcal{T}(\hat{\w}(s),\gamma(s)) \Bigr)\,ds
\end{split}
\label{contraction_mapping_for_gamma}
\end{align}
defines a contraction on \begin{align*}
    X_{t_0,\tau_0} = \Bigl\{\gamma \in & C([t_0,t_0+\tau_0]; H^5(\R)) \cap C^1([t_0,t_0+\tau_0]; H^3(\R)): \\
    &\sup_{s \in [t_0, t_0+ \tau_0]} ||(\gamma(s), \gamma_s(s))||_{H^5 \times H^3} < \frac{c}{2}\Bigr\}
\end{align*} for sufficiently small $\tau_0>0$. By the Banach fixed point theorem, there exists a unique solution $$\gamma \in C([t_0,t_0+\tau_0]; H^5(\R)) \cap C^1([t_0,t_0+\tau_0]; H^3(\R))$$
of 
\begin{align}
\begin{split}
\label{extend_gamma}
    \gamma(t) = s_p(t)\v_0 &+ \int_0^{t_0} s_p(t-s)\Bigl(\mathcal{R}_3(\hat{\w}(s),\hat{\v}(\tilde{\gamma}(s),s),\tilde{\gamma}(s)) -\sigma_t(s)\hat{\v}_x(\tilde{\gamma}(s),s) \\
    &\qquad \qquad \qquad\qquad+ (1-\gamma_x(s))\mathcal{R}_{2,2}(\hat{\w}(s),\hat{\v}(\tilde{\gamma}(s),s)) + \mathcal{T}(\hat{\w}(s),\tilde{\gamma}(s)) \Bigr)\,ds \\
    &+\int_{t_0}^t s_p(t-s)\Bigl(\mathcal{R}_3(\hat{\w}(s),\hat{\v}(\gamma(s),s),\gamma(s)) -\sigma_t(s)\hat{\v}_x(\gamma(s),s) \\
    & \qquad \qquad \qquad \qquad  +(1-\gamma_x(s))\mathcal{R}_{2,2}(\hat{\w}(s),\hat{\v}(\gamma(s),s)) + \mathcal{T}(\hat{\w}(s),\gamma(s)) \Bigr)\,ds
    \end{split}
\end{align}
for $t\in [t_0,t_0+\tau_0]$. Since (\ref{contraction_mapping_for_gamma}) defines a contraction on $X_{t_1,t_2}$ for every $t_1 \in (0, \min\{\tau_{\textrm{max}},t_{\textrm{max},\sigma}\})$ and sufficiently small $t_2>0$, we conclude that
\begin{align}
\label{gamma_taking_piece_together}
    \gamma(t) = \begin{cases}
        \tilde{\gamma}(t), & t\in [0,t_0]\\
        \gamma(t), & t\in [t_0,t_0+\tau_0]
    \end{cases} 
\end{align}
is the unique solution of (\ref{implicit_gamma}) on $[0,t_0 + \tau_0]$ such that $\gamma \in X_{t_0,\tau_0}$. Now, there exists a maximal time $t_{\textrm{max},\gamma} \in (0,\min\{\tau_{\textrm{max}},t_{\textrm{max},\sigma}\}]$ such that (\ref{max_gamma_cond}) holds. Assume that $t_{\textrm{max},\gamma}<\min\{\tau_{\textrm{max}},t_{\textrm{max},\sigma}\}$. If (\ref{finite_time_cond_gamma}) fails, then there exists a unique solution $\tilde{\gamma}$ of (\ref{implicit_gamma}) with $\sup_{t \in [0,t_{\textrm{max},\gamma})} ||(\tilde{\gamma}(t),\tilde{\gamma}_t(t))||_{H^5\times H^3} < \frac{c}{2}.$ This again allows to solve (\ref{extend_gamma}) on $X_{t_{\textrm{max},\gamma}, \tau_0'}$ for some small $\tau_0'$ giving a solution of (\ref{implicit_gamma}) via \ref{gamma_taking_piece_together} on $[0,t_{\textrm{max},\gamma} + \tau_0')$ with $ ||(\gamma(s), \gamma_s(s))||_{H^5 \times H^3} < \frac{c}{2}$ for all $t \in [0,t_{\textrm{max},\gamma} + \tau_0')$. This contradicts the maximality of $t_{\textrm{max},\gamma}$ and we conclude (\ref{finite_time_cond_gamma}).
\end{proof}

\bibliographystyle{abbrv}
\bibliography{refs}
\end{document}